\tikzset{
v/.style={draw, fill, circle, minimum size=1.5mm, inner sep=0},
b/.style={draw , regular polygon,regular polygon sides=4, minimum size=1.5mm, inner sep=.5mm},
e/.style={very thick},
vs/.style={draw, fill, circle, minimum size=1mm, inner sep=0},
bs/.style={draw,  regular polygon,regular polygon sides=4, minimum size=2mm, inner sep=0mm},
es/.style={thick}
}
\newlength{\nodeheight}
\newlength{\nodewidth}
\theoremstyle{plain}
\newtheorem{theorem}{Theorem}[section]
\theoremstyle{definition} \newtheorem{definition}[theorem]{Definition}
\theoremstyle{plain}
\newtheorem{lemma}[theorem]{Lemma}
\theoremstyle{plain} \newtheorem{proposition}[theorem]{Proposition}
\theoremstyle{plain} 
\theoremstyle{plain} \newtheorem{corollary}[theorem]{Corollary}
\theoremstyle{remark} 
\theoremstyle{remark} \newtheorem{remark}[theorem]{Remark}
\theoremstyle{remark} \newtheorem{example}[theorem]{Example}
\theoremstyle{remark} 
\theoremstyle{definition} 
\renewcommand{\t}{\mathbbm{1}}
\newcommand{\rook}{\mathcal{R}}
\newcommand{\RB}{\mathcal{R}\mathrm{Br}}
\newcommand{\Tor}{\mathrm{Tor}}
\newcommand{\Brauer}{\mathrm{Br}}
\newcommand{\TL}{\mathrm{TL}}
\newcommand{\mOne}{splice}
\newcommand{\mTwo}{deletion}
\newcommand{\maxTpt}{canonical}
\begin{document}

\title{Idempotents and homology of diagram algebras}

\author{Guy Boyde}
\address{Mathematical Institute, Utrecht University, Heidelberglaan 8
3584 CS Utrecht, The Netherlands}
\email{g.boyde@uu.nl}
%\url{https://orcid.org/0000-0001-9447-6840}

\subjclass[2020]{Primary 20J06, 16E40; Secondary 20B30}
\keywords{Homology of algebras, rook algebra, Temperley-Lieb algebra, Brauer algebra, idempotents}
% \footnote{On behalf of all authors, the corresponding author states that there is no conflict of interest.}
% \footnote{All data generated or analysed during this study are included in this published article.}

\begin{abstract} This paper provides a systematization of some recent results in homology of algebras. Our main theorem gives criteria under which the homology of a diagram algebra is isomorphic to the homology of the subalgebra on diagrams having the maximum number of left-to-right connections. From this theorem, we deduce the `invertible-parameter' cases of the Temperley-Lieb and Brauer results of Boyd-Hepworth and Boyd-Hepworth-Patzt. We are also able to give a new proof of Sroka's theorem that the homology of an odd-strand Temperley-Lieb algebra vanishes, as well as an analogous result for Brauer algebras and an interpretation of both results in the even-strand case. Our proofs are relatively elementary: in particular, no auxiliary chain complexes or spectral sequences are required. We briefly discuss the relationship to cellular algebras in the sense of Graham-Lehrer.
\end{abstract}

\maketitle

\section{Introduction}

The study of homology of algebras begins by observing that the homology of a group $G$ is $\Tor_*^{RG}(\t,\t)$, and that this expression still makes perfectly good sense if one replaces $RG$ by any associative algebra $A$ with a choice of trivial module (or augmentation) $\t$ \cite[Section 2.4.4]{Benson}.

Boyd, Hepworth, and Patzt have pioneered this point of view in the context of homological stability, establishing a variety of results for Iwahori-Hecke algebras of type $A$ \cite{Hepworth}, Temperley-Lieb algebras \cite{BH,BHComb},  Brauer algebras \cite{BHP}, and partition algebras \cite{BHP2}. Patzt has also studied the representation stability of these families \cite{Patzt}. Sroka \cite{Sroka} has since improved the known results on Temperley-Lieb algebras when the number of strands is odd, and Moselle \cite{Moselle} has recently shown that Iwahori-Hecke algebras of type $B$ exhibit homological stability.

These algebras all have an interpretation in terms of diagrams. In some cases, the above authors are able to completely compute the homology, rather than just establishing stability. In these `global' computations, the homology of the algebra always turns out to be isomorphic to that of a certain retract, and this retract always has the same description: it is `the subalgebra on diagrams having the maximum number of left-to-right connections' (see Proposition \ref{retract}). This is generally a good thing: this subalgebra is isomorphic to a group algebra, and the group usually turns out to be a familiar one whose homology is known.

The goal of this paper is to provide an explanation of this phenomenon, using our main technical result, Theorem \ref{subalgebraOfBrauer}. We will focus on Brauer and Temperley-Lieb algebras, as studied in \cite{BH,BHP,Sroka}, as well as two classes whose homology does not yet appear to have been studied: the rook and rook-Brauer algebras.

Among the existing global computations, Sroka's proof that the homology of a Temperley-Lieb algebra on an odd number of strands vanishes \cite{Sroka} is distinguished: the other results all hold when the parameter defining the algebra is invertible, but Sroka's theorem holds whenever the number of strands is odd. The setup of Theorem \ref{subalgebraOfBrauer} will allow us to recover Sroka's result (as Theorem \ref{theoremSroka}), prove an analogue for Brauer algebras (Theorem \ref{BrauerSroka}) and give an interpretation of both results in the even-strand case (Theorems \ref{generalisedSroka} and \ref{generalisedBrauerSroka}). These results are our main applications. We will also recover invertible-parameter homology vanishing for Temperley-Lieb algebras (Theorem \ref{TLRecovery}, originally \cite{BH}), and for Brauer algebras (Theorem \ref{BrauerRecovery}, originally \cite{BHP}), and add a few similar results for rook and rook-Brauer algebras. As shorthand, we will refer to these two classes of result as `Sroka-type' and `invertible-parameter' respectively.

The main theorem is quite technical, so we will begin by giving precise statements of these applications, in Subsection \ref{subsectionResults}, deferring the statement of the main theorem to Subsection \ref{subsectionMaintheorem}, and the definitions of the algebras involved to Section \ref{sectionAlgs}.

Since we deal only in global computations, we recover only the substantially easier part of the work of Boyd-Hepworth \cite{BH} and Boyd-Hepworth-Patzt \cite{BHP}: the harder part of their work consists of `true' homological stability results that hold only in a range. As such, one way of thinking of this paper is as an attempt to recover the global part of the existing theory `direct from the algebra', avoiding the need to handcraft an acyclic chain complex, or deploy any spectral sequences. The `inductive' character of the methods introduced by Boyd-Hepworth will still be visible, especially in the proof of the main theorem.

In another direction, the Temperley-Lieb and Brauer algebras are \emph{cellular} in the sense of Graham and Lehrer \cite{GrahamLehrer}, and the structure on which our methods depends is closely related to the cellular structure (but not identical, see Remark \ref{rmk:cellVsLS}). We hope to explore this connection more in future work, but here we confine ourselves to a few comments, directed at readers familiar with the basic language of cellular algebras, in the interest of accessibility and self-containedness. Such a reader may be especially interested in Remark \ref{rmk:cellularApplicability}, where we suggest that the sensitivity of the cellular technology to the ground ring, which is an advantage in representation theory, is likely to be a disadvantage here.

We hope that these features will make this paper useful in bridging the gap between the homological stability community and mathematicians in other areas who are interested in the (rook-)Brauer and Temperley-Lieb algebras.

\subsection{Applications} \label{subsectionResults}

As mentioned earlier, our applications come in two flavours: those where the defining parameter(s) are invertible, and Sroka-type results which depend on the parity of the number of strands. The most interesting results are those of the second sort, so we begin there. Definitions of the algebras appearing here are given in Section \ref{sectionAlgs}.

\subsubsection{Sroka-type results}

Sroka \cite{Sroka} has proven the following, which holds even for non-invertible $\delta$.

\begin{theorem}[\cite{Sroka}] \label{theoremSroka} Let $n$ be odd. Then $\Tor^{\TL_n(\delta)}_q(\t,\t)=0$ for $q>0$. \end{theorem}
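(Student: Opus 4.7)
The plan is to invoke the main theorem, Theorem \ref{subalgebraOfBrauer}, with $A = \TL_n(\delta)$, $\ell = 0$, and $m = n-1$. Success would yield
$$\Tor^{\TL_n(\delta)}_*(\t,\t) \;\cong\; \Tor^{\TL_n(\delta)/(\TL_n(\delta) \cap I_{n-1})}_*(\t,\t),$$
and the quotient on the right is the {\maxTpt} subalgebra of $\TL_n$, which is just $R$ (the identity being the sole planar diagram with $n$ through-strands). Since $\Tor^R_*(\t,\t)$ is $R$ concentrated in degree $0$, the theorem would follow at once.

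It then remains to verify the two hypotheses of Theorem \ref{subalgebraOfBrauer}. The freeness hypothesis is immediate from the definition of $\TL_n$. The substantive point is to exhibit, for each $i$ with $0 \le i \le n-1$ and each $p \in P_i$ arising as the right link state of some $\TL_n$-diagram, an idempotent $e_p \in \TL_n$ with $\TL_n \cdot e_p = \TL_n \cap J_p$. This is where the oddness of $n$ enters decisively: by a parity argument (each arc covers two positions) a planar link state on $n$ nodes is realised in $\TL_n$ only when its number of defects has the same parity as $n$, so for odd $n$ every relevant $p$ carries an odd, hence strictly positive, number of defects. In particular the case $i = 0$ is vacuous.

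The construction I propose is to take $e_p$ to be a Temperley-Lieb diagram whose right link state equals $p$, with left link state and through-strand pattern chosen so that in the concatenation $e_p \cdot e_p$ the arcs of $p$ on the right of the first copy and the arcs of the left link state on the left of the second copy assemble into \emph{paths} rather than \emph{cycles}. The middle graph after concatenation has every vertex of degree at most $2$, and any connected component that meets a defect (coming from a through-strand of either copy) is automatically a path; since $p$ has at least one defect, one can choose the left link state, for instance as a planar ``shift'' of $p$, so that no component consists entirely of matched arcs. Given $e_p^2 = e_p$, the inclusion $\TL_n \cdot e_p \subseteq \TL_n \cap J_p$ is automatic since $J_p$ is a left ideal; the reverse inclusion reduces to showing $y \cdot e_p = y$ for every $y \in \TL_n \cap J_p$, which is the same no-loop, fixed-diagram calculation with $y$ in place of the second copy of $e_p$.

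The expected main obstacle is this construction and verification of $e_p$: producing an explicit left link state and through-strand assignment which simultaneously delivers (a) $e_p^2 = e_p$ and (b) $y \cdot e_p = y$ for every $y$ whose right link state is obtained from $p$ by a sequence of splices. Planarity constrains these choices, but the existence of a defect is exactly what breaks every potential loop into a path, and this is the sole place where the oddness of $n$ is used.
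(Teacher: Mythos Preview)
Your overall strategy is correct and is exactly the paper's approach: apply Theorem \ref{subalgebraOfBrauer} to $\TL_n(\delta)$, use the parity of $n$ to see that the case $i=0$ is vacuous (equivalently, the paper takes $\ell=1$ and observes $\TL_n \cap I_0 = 0$ for odd $n$), and then for each planar link state $p$ with at least one defect produce a Temperley-Lieb idempotent $e_p$ with $y e_p = y$ for all $y \in \TL_n \cap J_p$. You have also correctly located the only real difficulty.

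Where your proposal is too optimistic is in the construction of $e_p$. The heuristic ``the existence of a defect is exactly what breaks every potential loop into a path'' is not by itself enough: a single defect only guarantees that the component \emph{containing that defect} is a path, not that every arc of $p$ lies on such a path. One must choose the left link state so that \emph{every} arc of $p$ is threaded onto a path terminating at a defect, and doing this while preserving planarity is genuinely nontrivial. A naive ``planar shift of $p$'' does not in general achieve this. The paper's construction (Lemma \ref{hardTrundle}) first partitions the nodes into ``gardens'', one per defect, and then within each garden invokes a combinatorial fact about meanders (Lemma \ref{hardSingleTrundle}, due to Di Francesco--Golinelli--Guitter) asserting that any planar arc configuration can be completed to a single-component meander; cutting one arc of this meander yields a planar left link state that strings all the arcs of $p$ into a single path from $m_{d_j}$ to $r_{d_j}$. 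This is precisely the content needed to verify the conditions of Lemma \ref{LSControl}, which then gives both $e_p^2 = e_p$ and $y e_p = y$ simultaneously. So your plan is right, but the step you flagged as the main obstacle really does require an idea beyond ``shift $p$''.
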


We are able to offer one interpretation of what this should mean for even $n$. First, note that the Temperley-Lieb algebra $\TL_n = \TL_n(\delta)$ contains diagrams having no left-to-right connections if and only if $n$ is even, or in other words, if $I_0 \leq \TL_n$ is the two-sided ideal which is free as an $R$-module on diagrams with no left-to-right connections, then $\TL_n \cap I_0 = 0$ if and only if $n$ is odd. We will prove the following theorem, which (therefore) coincides with Sroka's result when $n$ is odd.

\begin{theorem} \label{generalisedSroka} Let $n$ be any positive integer. Then $$\Tor^{\faktor{\TL_n(\delta)}{I_0}}_q(\t,\t)=0$$ for all $q>0$. \end{theorem}

From this point of view, the moral is that the significant point of Sroka's result is not the parity itself, but rather whether diagrams with no connections are permitted. This is also true from the viewpoint of Sroka's paper, and our proof, although quite different to Sroka's, has echoes of some of the same basic phenomena. The ill-behaviour of diagrams with no left-to-right connections is also familiar from the cellular point of view, as for example in the classification of irreducible representations of $\TL_n$ over a field in \cite[Corollary 6.8]{GrahamLehrer}.

Boyd, Hepworth, and Patzt \cite{BHP} show that the homology of the Brauer algebras is stably isomorphic to that of the symmetric groups. They show that their result is sharp for $n=2$, and ask whether it is sharp in general. This motivates the next theorem, which is our main new application.

\begin{theorem} \label{BrauerSroka} Let $n$ be odd. Then $$\Tor^{\Brauer_n(\delta)}_*(\t,\t) \cong \Tor_*^{R \Sigma_n}(\t,\t) =: H_*(\Sigma_n ; R).$$ \end{theorem}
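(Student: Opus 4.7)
The plan is to apply Theorem \ref{subalgebraOfBrauer} to $A = \Brauer_n(\delta)$ with $\ell = 0$ and $m = n-1$. Granting the two hypotheses, the theorem yields
$$\Tor^{\Brauer_n(\delta)}_*(\t,\t) \cong \Tor^{\Brauer_n(\delta)/(\Brauer_n(\delta) \cap I_{n-1})}_*(\t,\t),$$
and Proposition \ref{retract}, combined with the identification of the canonical subalgebra of $\Brauer_n(\delta)$ as $R\Sigma_n$ given earlier, identifies the right hand side as $H_*(\Sigma_n;R)$, which is exactly the claim.

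The $R$-freeness hypothesis is immediate from the definition. The content is the idempotent hypothesis, and this is where the oddness of $n$ enters in two ways. First, the number of left-to-right connections of a Brauer $n$-diagram is congruent to $n$ modulo $2$, so for odd $n$ the algebra $\Brauer_n(\delta)$ contains no diagrams with right link state in any $P_i$ with $i$ even; the hypothesis is then vacuous in those cases. Second, for odd $i$ with $1 \leq i \leq n-1$ and $p \in P_i$, the link state $p$ has at least one defect, and we exploit this to construct the required idempotent $e_p$. Explicitly, we pick a link state $q_0$ on $n$ nodes with $i$ defects such that concatenating a copy of $p$ (playing the role of the right half of one copy of $e_p$) against a copy of $q_0$ (playing the role of the left half of the next copy) produces no cycles in the middle; the through-strands afforded by the positive defect count give enough room to route the arcs of $q_0$ so this holds. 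We then take $e_p$ to be the Brauer diagram with left link state $q_0$, right link state $p$, and the identity permutation on through-strands.

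With $e_p$ in hand the verification proceeds in three steps. First, $e_p \cdot e_p = e_p$: the acyclicity in the middle graph means no $\delta$ factor is generated, and the identity through-strand permutation composes with itself to reproduce $e_p$ on the nose. Second, $\Brauer_n(\delta)\cdot e_p \subseteq \Brauer_n(\delta) \cap J_p$: by the general observation recalled before the statement of Theorem \ref{subalgebraOfBrauer}, the right link state of any product $x \cdot e_p$ is obtained from $p$ by splices (there are no deletions in the Brauer setting). Third, the reverse inclusion: any Brauer diagram $D$ with right link state $p'$ obtainable from $p$ by a sequence of splices should satisfy $D \cdot e_p = D$ by the same acyclic concatenation argument, so $D = D \cdot e_p \in \Brauer_n(\delta) \cdot e_p$.

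The main obstacle is the combinatorial input behind the construction: one must actually exhibit a $q_0$ whose concatenation with $p$ is acyclic, and verify that this acyclicity persists when $p$ is replaced by any $p'$ below $p$ in the splice partial order (so that step three goes through for all $D \in \Brauer_n(\delta) \cap J_p$). This is a finitary problem that hinges on the odd parity: if $i$ were allowed to be $0$ (as it can be when $n$ is even), then $p$ and $q_0$ would both have to be perfect matchings on the same $n$ nodes, and any such concatenation inevitably produces a middle cycle, causing the whole strategy to collapse. This is the same essential phenomenon responsible for the parity hypothesis in Sroka's original Temperley-Lieb result.
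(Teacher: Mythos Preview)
Your overall strategy matches the paper's: apply Theorem \ref{subalgebraOfBrauer} to $\Brauer_n(\delta)$, use the parity of $n$ to make the $i=0$ case vacuous, and construct an idempotent $e_p$ for each relevant link state $p$ with the stronger property that $y\,e_p = y$ for every diagram $y \in \Brauer_n(\delta)\cap J_p$. The paper does exactly this (phrasing it via $\ell=1$ and the observation $\Brauer_n\cap I_0=0$ for odd $n$), using Lemma \ref{LSControl} and Lemma \ref{easyTrundle}.

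The gap is in your construction of $e_p$. You claim that if the juxtaposition of $p$ and $q_0$ is acyclic, then taking the identity permutation on through-strands makes $e_p$ idempotent and makes $D\,e_p = D$ for $D\in J_p$. This is false: acyclicity only prevents $\delta$-factors, it does not force the through-strands to compose back to themselves. Concretely, take $n=5$, $p$ with arc $4$--$5$ and defects at $1,2,3$, and $q_0$ with arc $1$--$4$ and defects at $2,3,5$. One checks that the juxtaposition of $q_0$ with $p$ \emph{and with every splice of $p$} is acyclic, so your persistence condition holds. With the identity permutation (left $2\to$ right $1$, left $3\to$ right $2$, left $5\to$ right $3$), a direct computation gives that $e_p^2$ has through-strands $2\to 3$, $3\to 1$, $5\to 2$, which is a different permutation; hence $e_p^2\neq e_p$. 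The same mechanism breaks your step three.

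What is actually needed is the content of Lemma \ref{LSControl}: in the sesqui-diagram $(p,e_p)$ one must have $m_j\sim r_j$ for every defect $j$ of $p$ (Property (2)), not merely that every $m_j$ reaches \emph{some} $r_k$. The paper secures this (Lemma \ref{easyTrundle}) by making all but one defect horizontal and routing the last defect to its partner $r_{j_0}$ through an Eulerian path that visits every arc of $p$. This stronger ``each defect returns to its own position'' condition is the missing idea in your proposal; acyclicity alone, even persistent acyclicity, does not supply it.
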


This extends the results of \cite{BHP}, showing that in fact the identification of the homology with that of the symmetric groups holds globally, at least when $n$ is odd, thereby answering their sharpness question in odd parity. As with Sroka's theorem, Theorem \ref{BrauerSroka} follows immediately from a result that holds regardless of the parity of $n$, by using the fact that $\Brauer_n \cap I_0=0$ when $n$ is odd.

\begin{theorem} \label{generalisedBrauerSroka} Let $n$ be any positive integer. Let $I_0 \leq \Brauer_n(\delta)$ be the two-sided ideal that is free as an $R$-module on diagrams with no left-to-right connections. Then $$\Tor^{\faktor{\Brauer_n(\delta)}{I_0}}_*(\t,\t) \cong \Tor_*^{R \Sigma_n}(\t,\t) =: H_*(\Sigma_n ; R).$$ \end{theorem}

\subsubsection{Invertible-parameter results}

For the rook algebras $\rook_n(\varepsilon)$, we will show (Theorem \ref{rookInvertible}) that if $\varepsilon$ is invertible then the homology of the rook algebras is just that of the symmetric groups, which has been completely computed by Nakaoka \cite{Nakaoka}. This is the simplest application, and does not require the full strength of Theorem \ref{subalgebraOfBrauer}; instead, (a special case of) the much simpler Theorem \ref{quotientingPlus} suffices. The reader who wishes to understand the framework of this paper is encouraged to begin with this application.

By the same method, we show that if $\varepsilon$ is invertible then, regardless of the value of $\delta$, the homology of a rook-Brauer algebra coincides with that of its Brauer subalgebra (Theorem \ref{rookBrauerInvertible}). As a consequence (Corollary \ref{funOne}), these rook-Brauer algebras inherit the stability range for the Brauer algebras due to Boyd-Hepworth-Patzt \cite{BHP}. Our first application of Theorem \ref{subalgebraOfBrauer} is then to recover another theorem from that paper as Theorem \ref{BrauerRecovery}, namely that when $\delta$ is invertible, the homology of the Brauer algebras coincides with that of the symmetric groups.

For the Temperley-Lieb algebras, we recover a result of Boyd-Hepworth as Theorem \ref{TLRecovery}: when $\delta$ is invertible, the homology vanishes.

\subsection{The main theorem} \label{subsectionMaintheorem}

We now assemble the terminology necessary for the statement of the main theorem (Theorem \ref{subalgebraOfBrauer}).

We will use `diagram algebra' (informally) to mean a `nice enough' subalgebra of the rook-Brauer algebra, which we now define. Our ground ring $R$ will always be commutative with unit, and, at least in the introduction, pictures are drawn for $n=5$.

Martin and Mazorchuk \cite{MartinMazorchuk} (who call them \emph{partial Brauer algebras}) state that the first appearance of the rook-Brauer algebras is in the physics literature, namely that they are implicit in the paper \cite{GrimmWarnaar} of Grimm and Warnaar, but it is not clear (at least to the present author) exactly what is meant by this. They are a variant of the much older Brauer algebras, which we will meet later (Definition \ref{defBrauer}). The analogous semigroup has also been studied, at least as far back as Mazorchuk's paper \cite{MazorchukPre}. Our notation will be closest to that of Halverson and delMas \cite{HalversondelMas}.

\begin{definition} A \emph{rook-Brauer} $n$-diagram is a graph consisting of two columns of $n$ nodes (vertices) such that each node is connected to at most one other node by an edge.

Let $R$ be a commutative ring with unit, let $n$ be a natural number, and let $\delta, \varepsilon \in R$. An element of the \emph{rook-Brauer algebra} $\rook \Brauer_n = \rook \Brauer_n(\delta, \varepsilon)$ is a formal $R$-linear combination of rook-Brauer $n$-diagrams. The multiplication is the bilinear extension of a multiplication rule defined on diagrams. For diagrams $x$ and $y$, the product $x \cdot y$ (sometimes $xy$) will be another diagram multiplied by a prefactor of the form $\delta^a \varepsilon^b$. It is defined via the following procedure:
\begin{itemize}
    \item Concatenate $x$ and $y$ by identifying the right hand nodes of $x$ with the left hand nodes of $y$.
    \item Forgetting the vertices in the middle of this concatenated diagram gives an object which differs from a rook-Brauer diagram only in the possible presence of isolated components (sequences of edges) in the middle. These are dealt with as follows, depending on their topological type:
    \begin{itemize}
    \item Each loop should be replaced with a factor of $\delta$.
    \item Each contractible component should be replaced with a factor of $\varepsilon$.
\end{itemize}
\end{itemize}
The result is the (definition of the) product $x \cdot y$.
\end{definition}

We illustrate this multiplication rule with an example.

\begin{example} In $\rook \Brauer_5$, if 
\begin{center}
\[
x =
\begin{tikzpicture}[x=1.5cm,y=-.5cm,baseline=-1.05cm]
\node[v] (a1) at (0,0) {};
\node[v] (a2) at (0,1) {};
\node[v] (a3) at (0,2) {};
\node[v] (a4) at (0,3) {};
\node[v] (a5) at (0,4) {};

\node[v] (b1) at (1,0) {};
\node[v] (b2) at (1,1) {};
\node[v] (b3) at (1,2) {};
\node[v] (b4) at (1,3) {};
\node[v] (b5) at (1,4) {};

\draw[e] (a2) to[out=0, in=0] (a3);

\draw[e] (a1) to[out=0, in=180] (b5);
\draw[e] (a4) to[out=0, in=180] (b2);

\draw[e] (b1) to[out=180, in=180] (b4);

\end{tikzpicture}
\quad
\textrm{ and } y = 
\quad
\begin{tikzpicture}[x=1.5cm,y=-.5cm,baseline=-1.05cm]

\node[v] (b1) at (0,0) {};
\node[v] (b2) at (0,1) {};
\node[v] (b3) at (0,2) {};
\node[v] (b4) at (0,3) {};
\node[v] (b5) at (0,4) {};

\node[v] (c1) at (1,0) {};
\node[v] (c2) at (1,1) {};
\node[v] (c3) at (1,2) {};
\node[v] (c4) at (1,3) {};
\node[v] (c5) at (1,4) {};

\draw[e] (b5) to[out=0, in=180] (c4);

\draw[e] (b1) to[out=0, in=0] (b4);

\draw[e] (c1) to[out=180, in=180] (c3);

\end{tikzpicture}
\]
\end{center}

then the product $x \cdot y$ is
\begin{center}
\[
\begin{tikzpicture}[x=1.5cm,y=-.5cm,baseline=-1.05cm]
\node[v] (a1) at (0,0) {};
\node[v] (a2) at (0,1) {};
\node[v] (a3) at (0,2) {};
\node[v] (a4) at (0,3) {};
\node[v] (a5) at (0,4) {};

\node[v] (b1) at (1,0) {};
\node[v] (b2) at (1,1) {};
\node[v] (b3) at (1,2) {};
\node[v] (b4) at (1,3) {};
\node[v] (b5) at (1,4) {};

\draw[e] (a2) to[out=0, in=0] (a3);

\draw[e] (a1) to[out=0, in=180] (b5);
\draw[e] (a4) to[out=0, in=180] (b2);

\draw[e] (b1) to[out=180, in=180] (b4);

\end{tikzpicture}
\quad
\cdot
\quad
\begin{tikzpicture}[x=1.5cm,y=-.5cm,baseline=-1.05cm]

\node[v] (b1) at (0,0) {};
\node[v] (b2) at (0,1) {};
\node[v] (b3) at (0,2) {};
\node[v] (b4) at (0,3) {};
\node[v] (b5) at (0,4) {};

\node[v] (c1) at (1,0) {};
\node[v] (c2) at (1,1) {};
\node[v] (c3) at (1,2) {};
\node[v] (c4) at (1,3) {};
\node[v] (c5) at (1,4) {};

\draw[e] (b5) to[out=0, in=180] (c4);

\draw[e] (b1) to[out=0, in=0] (b4);

\draw[e] (c1) to[out=180, in=180] (c3);

\end{tikzpicture}
\quad
=
\quad
\begin{tikzpicture}[x=1.5cm,y=-.5cm,baseline=-1.05cm]
\node[v] (a1) at (0,0) {};
\node[v] (a2) at (0,1) {};
\node[v] (a3) at (0,2) {};
\node[v] (a4) at (0,3) {};
\node[v] (a5) at (0,4) {};

\node[v] (b1) at (1,0) {};
\node[v] (b2) at (1,1) {};
\node[v] (b3) at (1,2) {};
\node[v] (b4) at (1,3) {};
\node[v] (b5) at (1,4) {};

\node[v] (c1) at (2,0) {};
\node[v] (c2) at (2,1) {};
\node[v] (c3) at (2,2) {};
\node[v] (c4) at (2,3) {};
\node[v] (c5) at (2,4) {};

\draw[e] (a2) to[out=0, in=0] (a3);

\draw[e] (a1) to[out=0, in=180] (b5);
\draw[e] (a4) to[out=0, in=180] (b2);

\draw[e] (b1) to[out=180, in=180] (b4);

\draw[e] (b5) to[out=0, in=180] (c4);

\draw[e] (b1) to[out=0, in=0] (b4);

\draw[e] (c1) to[out=180, in=180] (c3);

\end{tikzpicture}
\quad
= \delta \varepsilon \cdot
\quad
\begin{tikzpicture}[x=1.5cm,y=-.5cm,baseline=-1.05cm]
\node[v] (a1) at (0,0) {};
\node[v] (a2) at (0,1) {};
\node[v] (a3) at (0,2) {};
\node[v] (a4) at (0,3) {};
\node[v] (a5) at (0,4) {};

\node[v] (c1) at (1,0) {};
\node[v] (c2) at (1,1) {};
\node[v] (c3) at (1,2) {};
\node[v] (c4) at (1,3) {};
\node[v] (c5) at (1,4) {};

\draw[e] (a2) to[out=0, in=0] (a3);

\draw[e] (a1) to[out=0, in=180] (c4);

\draw[e] (c1) to[out=180, in=180] (c3);

\end{tikzpicture}
\quad
\]
\end{center}

We get a single factor of $\delta$ from the single loop, and a single factor of $\varepsilon$ because there is a single contractible component (in the form of an isolated node) in the middle.

Note also that there are two left-to-right connections in $x$: one meets a left-to-right connection of $y$, forming a left-to-right connection in the product. The other meets a `missing' edge of $y$, and disappears in the product. In the concatenated diagram, one may think of contracting this edge down to its initial node. \end{example}

\begin{definition} \label{defTriv} The \emph{trivial module} $\t$ for $\rook \Brauer_n(\delta,\varepsilon)$ is a single copy of $R$, where diagrams with no missing edges and no left-to-left or right-to-right connections act as multiplication by 1, and other diagrams act as multiplication by 0. \end{definition}

Any subalgebra of $\rook \Brauer_n(\delta,\varepsilon)$ acts on $\t$ by restriction. To state our main result (Theorem \ref{subalgebraOfBrauer}), we need some more terminology, which was introduced by Ridout and Saint-Aubin \cite{RidoutSaintAubin} in the context of Temperley-Lieb algebras.

\begin{definition} \label{defLS} By slicing vertically down the middle of a rook-Brauer diagram we obtain two `half-diagrams', which are called the \emph{left and right link states} of the diagram. Intrinsically, a link state consists of $n$ nodes, arranged vertically, where each node may be connected by an edge to at most one other, and a node not connected to any other may (or may not) have a `hanging' edge attached, whose other end is not connected to anything. A hanging edge (or the node to which it is attached) is called a \emph{defect}. \end{definition}

In particular, we do not think of link states as having an intrinsic handedness, even though they will always be drawn with one. This means for example that we can say that `a diagram that is symmetric about the vertical has identical left and right link states'.

\begin{remark} \label{rmk:cellVsLS} Our link states are related to the part of the data defining a cellular algebra that Graham and Lehrer \cite{GrahamLehrer} call $M(\lambda)$. For the Temperley-Lieb algebras (Definition \ref{defTL}) they coincide with the sets $M(\lambda)$ of Graham and Lehrer's cell structure \cite[Theorem 6.7]{GrahamLehrer}, but for the Brauer algebra (Definition \ref{defBrauer}) they do not \cite[Theorem 4.10]{GrahamLehrer}. In basic terms, the reason is that a Temperley-Lieb diagram may be recovered from its link states, but a Brauer diagram may not, since one has lost information, namely the permutation on the left-to-right connections. For cellular algebras, this cannot happen: in Graham and Lehrer's notation, a pair of elements $S,T \in M(\lambda)$ uniquely determines a basis element $C_{S,T}^{\lambda}$. As such, Graham and Lehrer's cellular basis for the Brauer algebra is \emph{not} the basis of diagrams.

\emph{As such, our link states should be thought of a naive diagrammatic generalisation of Ridout and Saint-Aubin's definition to rook-Brauer algebras.} We should note that our point of view on the Brauer algebras is implicit in \cite{GrahamLehrer}: on the way to their cell structure on the Brauer algebra, they study the basis of Brauer diagrams, and denote basis elements by $[S_1,S_2,w]$, where the $S_i$ are link states in our sense, and $w$ is the permutation on the through strands. In K\"onig and Xi's point of view on cellular algebras, there is also a sense in which `our point of view may be refined to a cellular structure', and this is made precise as \cite[Corollary 5.1]{KonigXi2}.
\end{remark}

Although the name `link state' was introduced by Ridout and Saint Aubin in \cite{RidoutSaintAubin} those authors point out that the concept predates that paper, being equivalent to the \emph{parenthesis structures} of \cite{Kauffman}, and the \emph{arch configurations} of \cite{FGG-TL}, as well as the cellular structure mentioned in Remark \ref{rmk:cellVsLS} above. Link states are such natural combinatorial objects that they surely occur in many other places under many other names.

\begin{example} Here is an illustration of the function that carries a rook-Brauer diagram to its right link state, as applied to a diagram in $\rook \Brauer_5$:

\begin{center}
\[
\begin{tikzpicture}[x=1.5cm,y=-.5cm,baseline=-1.05cm]
\node[v] (a1) at (0,0) {};
\node[v] (a2) at (0,1) {};
\node[v] (a3) at (0,2) {};
\node[v] (a4) at (0,3) {};
\node[v] (a5) at (0,4) {};

\node[v] (b1) at (1,0) {};
\node[v] (b2) at (1,1) {};
\node[v] (b3) at (1,2) {};
\node[v] (b4) at (1,3) {};
\node[v] (b5) at (1,4) {};

\draw[e] (a1) to[out=0, in=0] (a3);

\draw[e] (a2) to[out=0, in=180] (b3);
\draw[e] (a4) to[out=0, in=180] (b5);

\draw[e] (b1) to[out=180, in=180] (b2);

\end{tikzpicture}
\quad
\mapsto
\quad
\begin{tikzpicture}[x=1.5cm,y=-.5cm,baseline=-1.05cm]

\node[v] (b1) at (1,0) {};
\node[v] (b2) at (1,1) {};
\node[v] (b3) at (1,2) {};
\node[v] (b4) at (1,3) {};
\node[v] (b5) at (1,4) {};

\draw[e] (0,2) to[out=0, in=180] (b3);

\draw[e] (b1) to[out=180, in=180] (b2);
\draw[e] (b5) to[out=180, in=0] (0,4);

\end{tikzpicture}
\quad
\]
\end{center}

Since the original diagram had two left-to-right connections, the right link state has two defects.
\end{example}

For $i = 0, \dots , n$, the $R$-span of the rook-Brauer $n$-diagrams having at most $i$ left-to-right connections is a two-sided ideal of $\rook \Brauer_n(\delta, \varepsilon)$. We denote this ideal by $I_i$. For convenience, we set $I_{-1}=0$. Let $P_i$ be the set of link states with precisely $i$ defects.

Link states are important because they provide a natural family of one-sided ideals. First, given a link state, we may perform the following two operations:
\begin{itemize}
    \item Connect two defects to form an edge. For shorthand, we call this operation a \emph{\mOne}, and it defines a function $P_i \to P_{i-2}$.
    \item Delete a defect, leaving a missing edge. We call this operation a \emph{\mTwo}, and it defines a function $P_i \to P_{i-1}$.
\end{itemize}

For $p \in P_i$, let $J_p$ be the $R$-submodule of $\rook \Brauer_n(\delta, \varepsilon)$ with basis the diagrams having right link state obtained from $p$ by a (perhaps empty) sequence of {\mOne}s and {\mTwo}s. The following observation justifies the introduction of this terminology. Given diagrams $x$ and $y$, the right link state of $xy$ (ignoring factors of $\delta$ and $\varepsilon$) must be obtained from the right link state of $y$ by precisely such a sequence. This implies that $J_p$ is actually a left ideal of $\rook \Brauer_n(\delta, \varepsilon)$.

We now state the main theorem.

\begin{theorem} \label{subalgebraOfBrauer} Let $0 \leq \ell \leq m \leq n-1$. Let $A$ be a subalgebra of $\rook \Brauer_n(\delta, \varepsilon)$, such that \begin{itemize}
    \item as an $R$-module, $A$ is free on a subset of the rook-Brauer $n$-diagrams, and
    \item for $i$ in the range $\ell \leq i \leq m$, for each link state $p \in P_i$, if $A$ contains at least one diagram with right link state $p$, then $A$ contains an idempotent $e_p$ such that in $A$ we have equality of left ideals $A \cdot e_p = A \cap J_p$.
\end{itemize}
Then we have a chain of isomorphisms $$\Tor^{\faktor{A}{A \cap I_{\ell-1}}}_*(\t,\t) \cong \Tor^{\faktor{A}{A \cap I_{\ell}}}_*(\t,\t) \cong \dots \cong \Tor^{\faktor{A}{A \cap I_{m}}}_*(\t,\t).$$ \end{theorem}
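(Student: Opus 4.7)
The plan is to prove the theorem step by step: it suffices to show, for each $i$ with $\ell \leq i \leq m$, that the quotient $\pi_i : B_{i-1} \twoheadrightarrow B_i$ induces an isomorphism on $\Tor(\t,\t)$, where $B_j := A/(A\cap I_j)$; chaining these isomorphisms gives the theorem.

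Fix such an $i$ and write $B := B_{i-1}$, $B' := B_i$, and $J := \ker \pi_i$. As an $R$-module $J$ is free on the diagrams in $A$ with exactly $i$ left-to-right connections, and I partition this basis by right link state: $J = \bigoplus_p J^p$, where $p$ ranges over the link states in $P_i$ that are realised in $A$, and $J^p$ is $R$-spanned by the diagrams with right link state exactly $p$. The hypothesis supplies an idempotent $e_p \in A$ with $A e_p = A \cap J_p$; since each splice or deletion strictly decreases the defect count, the elements of $Ae_p$ having exactly $i$ left-to-right connections are precisely those with right link state $p$ itself, so the image $\bar e_p \in B$ satisfies $B\bar e_p = J^p$. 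Hence $J = \bigoplus_p B\bar e_p$ as a left $B$-module; each summand is a direct summand of $B$ via $B = B\bar e_p \oplus B(1-\bar e_p)$, so $J$ is left-$B$-projective, hence flat.

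Next, every element of $J_p \cap A$ is an $R$-combination of diagrams with at most $i \leq n-1$ left-to-right connections, and such diagrams act as zero on $\t$ from either side by the definition of the trivial module. Thus $\bar e_p \cdot \t = \t \cdot \bar e_p = 0$, and hence $\t \otimes_B B\bar e_p = 0$: for $b\bar e_p \in B\bar e_p$,
$$ 1 \otimes b\bar e_p \;=\; 1 \otimes b\bar e_p \cdot \bar e_p \;=\; (1 \cdot b\bar e_p) \otimes \bar e_p \;=\; 0.$$
Combined with flatness, this shows $\Tor^B_*(\t, B\bar e_p) = 0$ in every degree, whence $\Tor^B_*(\t, J) = 0$. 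Applying $\Tor^B(\t, -)$ to the short exact sequence $0 \to J \to B \to B' \to 0$ then yields an isomorphism $\Tor^B_*(\t, B') \cong \Tor^B_*(\t, B)$, which is $\t$ in degree $0$ and vanishes in higher degrees.

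To finish without invoking a spectral sequence, take any projective resolution $P_\bullet \to \t$ of $\t$ as a left $B'$-module. Each $P_n$ is a summand of a free $B'$-module, so $\Tor^B_q(\t, P_n)$ is a summand of a direct sum of copies of $\Tor^B_q(\t, B')$, which vanishes for $q > 0$. This exhibits $P_\bullet$ as a $\t$-acyclic resolution of $\t$ viewed over $B$, so $\Tor^B_*(\t, \t) = H_*(\t \otimes_B P_\bullet)$. Since $J$ annihilates $\t$ on the right and each $P_n$ on the left, the defining relations of $\otimes_B$ and $\otimes_{B'}$ agree on $\t \otimes P_n$, giving $\t \otimes_B P_\bullet = \t \otimes_{B'} P_\bullet$ as chain complexes and therefore $\Tor^B_*(\t,\t) = \Tor^{B'}_*(\t,\t)$. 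The main technical step is the structural identification $J = \bigoplus_p B\bar e_p$, together with the verification that $\t \otimes_B B\bar e_p = 0$ (where the bound $m \leq n-1$ is used crucially); an implicit assumption throughout is that $A$ contains the identity diagram, as is the case in all examples considered in the paper.
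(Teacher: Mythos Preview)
Your proof is correct and follows essentially the same route as the paper: decompose the kernel $(A\cap I_i)/(A\cap I_{i-1})$ as a direct sum of principal idempotent left ideals indexed by link states in $P_i$, deduce that the kernel is projective with $\t\otimes_B(-)=0$, and conclude by a change-of-rings argument. The only cosmetic difference is the direction of the change-of-rings step: the paper tensors a free $B$-resolution of $\t$ with $B'$ to obtain a free $B'$-resolution, whereas you pull a projective $B'$-resolution back to a $\t$-acyclic resolution over $B$; both land on the same identification $\t\otimes_B P_\bullet = \t\otimes_{B'} P_\bullet$.
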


Another way of stating the second hypothesis is to say that for each $p \in P_i$, the left ideal $A \cap J_p < A$ is either empty or is principal and generated by an idempotent.

We will only use this theorem when $\ell=0$ or $1$ and $m = n-1$, and in fact almost always with $\ell = 0$. The case $\ell = 1$ will be useful basically only for explaining what even-parity Sroka-type results should mean (Theorems \ref{generalisedSroka} and \ref{generalisedBrauerSroka}).

In the most common case, $\ell=0$, since $I_{-1}=0$ by definition, the first group in this chain of isomorphisms is actually the homology of $A$ itself. The theorem then says that the homology of $A$ is isomorphic to the homology of the subalgebra of $A$ generated by diagrams with the maximum number $n$ of left-to-right connections, since (Proposition \ref{retract}) this is isomorphic to $\faktor{A}{A \cap I_{n-1}}$. This subalgebra is typically well understood: for our examples it is usually the symmetric group algebra $R \Sigma_n$, with the sole exception of the Temperley-Lieb algebras, when it is just a copy of the ground ring $R$. The homology of $R \Sigma_n$ is the same as the group homology of $\Sigma_n$, which was computed by Nakaoka \cite{Nakaoka}. The homology of $R$ is trivial - for example by thinking of $R$ as the group ring of the trivial group. In particular, the right hand end of the above chain of isomorphisms is supposed to be useful for understanding the left hand end, rather than vice versa.

\subsection{Sketch of methods}

In this subsection, fix a commutative ring $R$ and an $R$-algebra $A$ with a choice of trivial module $\t$. We will sketch the proof of Theorem \ref{quotienting}, which is to say, the proof of Theorem \ref{quotientingPlus} with $k=1$. This is the simplest version of our main result.

Begin with the following observation. For $I$ a left ideal of $A$, if $I$ is generated by commuting idempotents, then $I$ is projective as a left $A$-module, and if $\t \cdot I = 0$ then $\t \otimes_A I = 0$. Since $I$ is projective as a left $A$-module, $$\dots \to 0 \to 0 \to I \to A \to \faktor{A}{I}$$ is a `very short' (augmented) projective resolution of $\faktor{A}{I}$. These facts suffice to show that if $\t \cdot I = 0$ then $\Tor_q^A(\t, \faktor{A}{I})=0$ for $q>0$.

If $I$ is a two-sided ideal of $A$, then $\faktor{A}{I}$ is again an algebra. If $I$ acts trivially on $\t$ in the sense that $\t \cdot I = I \cdot \t = 0$, then $\t$ becomes an $\faktor{A}{I}$-bimodule. Under these circumstances, the homology of $\faktor{A}{I}$ is well-defined, and it makes sense to ask when we have a homology isomorphism $$\Tor_*^A(\t,\t) \cong \Tor_*^{\faktor{A}{I}}(\t,\t).$$ The above $\Tor$-vanishing then implies that $\otimes_A \faktor{A}{I}$ carries free $A$-resolutions of $\t$ to free $\faktor{A}{I}$-resolutions of $\t$. This is the key step involved in showing that $\Tor_*^{\faktor{A}{I}}(\t,\t)$ and $\Tor_*^A(\t,\t)$ are the homology of the same chain complex, hence are isomorphic. This concludes the sketch of the proof of Theorem \ref{quotienting}.

Taking $I=A \cap I_{n-1}$, the ideal of diagrams having fewer than the maximum number of connections, this suffices for our two simplest applications, Theorems \ref{rookInvertible} and Theorem \ref{rookBrauerInvertible}. As has been alluded to, however, these are the only two theorems for which this simplest result will suffice.

The sense in which reality is not usually so simple is that the generators we identify for $A \cap I_{n-1}$ do not usually commute. To deal with this, we work inductively, in a sense which is precisely parallel to that introduced by Boyd and Hepworth \cite{BH}: here we will outline the sense in which each $I_i$ is still (inductively) projective.

From our point of view, working inductively is made possible by the observation (which occurs in the proof of Theorem \ref{subalgebraOfBrauer}) that for distinct link states $p$ and $q$ having $i$ defects, we have the inclusion $J_p \cap J_q \subset I_{i-1}$. The fact that every diagram has a link state amounts to a sum $$\faktor{I_i}{I_{i-1}} = \sum_{p \in P_i} \faktor{A \cap J_p}{I_{i-1}},$$ regarded as internal to $\faktor{A}{I_{i-1}}$, and the intersection condition says that actually this sum is direct. Since a direct sum of projectives is projective, we may focus our attention on establishing (the inductive fact) that each $\faktor{A \cap J_p}{I_{i-1}}$ is either zero or principal idempotent. This often \emph{can} be shown in practice - and it is this which makes Theorem \ref{subalgebraOfBrauer} work.

Once we know that $\faktor{I_i}{I_{i-1}}$ is projective over $\faktor{A}{I_{i-1}}$, we may proceed as before, and the resulting resolutions of $\faktor{A}{I_{i}}$ over $\faktor{A}{I_{i-1}}$ play precisely the same role as those constructed in \cite{BH} and \cite{BHP}. In those papers, a spectral sequence argument is required in the induction that then shows the $\Tor$-vanishing, but the relative simplicity of our resolutions eliminates the need for this.

\subsection{Discussion and philosophy}

This paper began life as an attempt to establish homological stability for rook algebras, and became something quite different. As such, we make no attempt at a `true' homological stability result for not-necessarily invertible parameter rook algebras - our methods do not apply.

It should not go unremarked that all of the algebras discussed here are isomorphic to their opposite. Geometrically, this corresponds to reflecting diagrams in the vertical. As such, all results of this paper still hold after interchanging `right' and `left' everywhere.

The verification of Theorem \ref{subalgebraOfBrauer} in this paper always goes via Lemma \ref{mirrorDiagram} (for invertible-parameter applications) or Lemma \ref{LSControl} (for Sroka-type applications). These lemmas verify a more eigenspace-flavoured statement, which echoes the cellular point of view, and which some readers may find helpful.

We hope that this paper succeeds in making precise the following morals:
\begin{itemize}
    \item Diagram algebras come with a `group in the middle' (Proposition \ref{retract}) which is independent of the parameters, always spans a subalgebra which is a retract, and often turns out to know all of the homology.
    \item Sroka's Theorem \cite{Sroka} is just one instance of a class of results (Theorems \ref{generalisedBrauerSroka} and \ref{generalisedSroka}) which might reasonably be expected to hold for other families of algebras.
\end{itemize}

The following ideas are borrowed from the study of the (real or complex) Temperley-Lieb algebra in other areas, but all of them are applicable to all of the diagram algebras we consider.
\begin{itemize}
    \item Link states (Definition \ref{defLS}): These appear for Temperley-Lieb algebras in \cite{RidoutSaintAubin}, and under other names in \cite{Kauffman}, \cite{FGG-TL}, and, in the context of cellular algebras, \cite{GrahamLehrer}. We repeat the warning (Remark \ref{rmk:cellVsLS}) that for Brauer algebras our link states do not coincide with the cellular structure. Link states define the family of ideals $J_p$ whose structural properties make possible Theorem \ref{subalgebraOfBrauer}. There will be echoes of various related ideas from \cite{GrahamLehrer} and \cite{RidoutSaintAubin}, notably the cell modules/link modules, and their inner products. We note when these parallels appear.
    \item Meanders \cite{FGG}: We will not use this language explicitly, but the basic idea (Lemma \ref{hardSingleTrundle}) needed to apply Theorem \ref{subalgebraOfBrauer} in the proof of our generalisation of Sroka's result (Theorem \ref{generalisedSroka}) is a procedure due to Francesco, Golinelli, and Guitter in \cite{FGG}. For us, this amounts to the possibility of making a `planar Eulerian cycle' through the edges of a given planar link state. In their language, one says that `every upper arch configuration extends to a one-component meander'. The analogous result for Brauer algebras is much simpler, and this is ultimately the reason why the proof of Theorem \ref{generalisedBrauerSroka} is easier.
\end{itemize}

Lastly, it seems appropriate to comment on the applicability of the cellular point of view.

\begin{remark} \label{rmk:cellularApplicability} The construction of suitable idempotents is a familiar thing in the context of cellular algebras; cellular algebras typically have many idempotents. For example, over a field, cell ideals in K\"onig and Xi's sense are either square zero or are generated by an idempotent \cite[Proposition 4.1]{KonigXi1}.

The main difficulty with applying the cellular point of view in our context is that (assuming the language of \cite{GrahamLehrer}) the nondegeneracy-type conditions on the bilinear forms $\phi_{\lambda}$ that are necessary to guarantee the existence of suitable idempotents often fail. For example, Graham and Lehrer show that even for their cell structure on the Brauer algebra, over a field of positive characteristic, $\phi_\lambda$ typically vanishes for at least some $\lambda$ \cite[Theorem 4.17]{GrahamLehrer}. This suggests that for example even Theorem \ref{BrauerRecovery} (the invertible-parameter Brauer result of Boyd-Hepworth-Patzt) cannot be recovered using this cellular structure.

Intuitively, then, the sensitivity of the cellular structure to the ground ring, which was an advantage in classifying irreducibles (where one wishes to determine for which $\lambda$ the bilinear form $\phi_{\lambda}$ vanishes \cite[Theorem 3.4]{GrahamLehrer}) becomes a disadvantage in attempting to compute homology (where one would wish to show that each $\phi_{\lambda}$ satisfied some nondegeneracy-type condition, and in particular did not vanish). Especially, results on homology of diagram algebras have tended to be very robust to the ground ring, depending at most on whether the parameter $\delta$ is invertible, and not, for example, on whether $R$ is a field, so it seems likely that technology which is less sensitive to the ground ring is more appropriate.
\end{remark}

\subsection{Organisation} This paper is organised as follows. In Section \ref{sectionAlgs} we define the families of algebras in which we are interested. In Section \ref{sectionRetract} we discuss the {\maxTpt} subalgebra, recording its main properties as Proposition \ref{retract}. This section is not much used in what follows, but provides context for the results. In Section \ref{SectionIdempotents} we will prove Theorem \ref{quotientingPlus}, and deduce Corollary \ref{quotientingInductively}. These belong to pure algebra and are the first steps towards Theorem \ref{subalgebraOfBrauer}. Theorem \ref{quotientingPlus} is already good enough for our applications to rook algebras, which we then give in Section \ref{sectionRook}. In Section \ref{SectionDiagram} we prove Theorem \ref{subalgebraOfBrauer}, our main result.

The remainder of the paper then deals with applications. Invertible-parameter applications are given in Section \ref{applicationsInvertible}: to the (rook-)Brauer algebras in Subsection \ref{subsectionInvBrauer}, and to the Temperley-Lieb algebras in Subsection \ref{subsectionInvTL}. Sroka-type applications are then given in Section \ref{applicationsSroka}, with applications to the (rook-)Brauer algebras in Subsection \ref{subsectionSrokaBrauer}, and applications to the Temperley-Lieb algebras in Subsection \ref{subsectionSrokaTL}. The applications are all independent of one another, though the invertible-parameter applications all use Lemma \ref{mirrorDiagram}, and the Sroka-type applications all use Lemma \ref{LSControl}. They are arranged roughly in order of increasing complexity. Many of the ideas introduced in later applications could be used in earlier ones, but we prefer to give simple proofs where possible, invoking the more complicated definitions only when necessary.

\subsection{Acknowledgements} I would like to thank Rachael Boyd and Richard Hepworth, for their encouragement, and for their suggestions and comments on previous drafts. I would also like to thank Sam Hughes for early encouragement and feedback, and Lawk Mineh, Niall Taggart, and an anonymous reviewer for useful comments on organisation and presentation. I would also like to thank Rachael for suggesting rook algebras, a class which leads quite naturally to thinking in terms of idempotents, and to thank Richard for giving an excellent and accessible talk on homological stability of algebras at Ran Levi's $60^{\textrm{th}}$ birthday conference. Thanks are therefore also due to certain bureaucratic forces which made it possible for me to attend that conference. This work was supported by the European Research council (ERC) through Gijs Heuts' grant “Chromatic homotopy theory of spaces”, grant no. 950048.

\section{The algebras} \label{sectionAlgs}

Here, we collect the definitions of the subalgebras of $\rook \Brauer_n(\delta, \varepsilon)$ that will appear in the applications of Theorem \ref{subalgebraOfBrauer}. Each of them will arise by restricting the set of permitted diagrams. As noted earlier, each of these algebras has a trivial module, obtained by restriction of scalars on the trivial module for $\rook \Brauer_n(\delta, \varepsilon)$.

For each subalgebra $A$, the subalgebra of $A$ on diagrams having the maximum number $n$ of left-to-right connections is a retract of $A$ (Proposition \ref{retract}). In each case, we will identify this retract - for want of a better name, we will call it the \emph{{\maxTpt} subalgebra} of $A$. It is always the group algebra on a subgroup $G$ of $\Sigma_n$, and is always independent of $\delta$ and $\varepsilon$. This tells us what answer to expect from an application of Theorem \ref{subalgebraOfBrauer} to the algebra in question. More immediately, since all functors preserve retracts, this identifies a summand of the homology of $A$ very cheaply.

First, for the rook-Brauer algebra itself, the {\maxTpt} subalgebra is isomorphic to the symmetric group algebra $R \Sigma_n$ (Proposition \ref{symmetricGroup}). Briefly, a diagram with the maximum number of left-to-right connections gives a bijection between the nodes on the left and those on the right, hence a permutation, and each permutation can be represented by such a diagram. This identification between permutations and diagrams respects the multiplication, so we see directly that this subalgebra is independent of $\delta$ and $\varepsilon$.

\subsection{The rook algebras}

The origin of the name `rook' appears to be with Solomon \cite{Solomon}, who defines the \emph{rook monoid} on $n$ strands to be the set of partially defined permutations of $n$ letters, together with composition. A diagram with no left-to-left or right-to-right connections may be interpreted as a partial permutation: the permutation is defined on those nodes on the left that have a left-to-right connection, and these are mapped to the node at the other end of the connection. The following definition therefore includes the algebra on Solomon's rook monoid as the case $\varepsilon=1$.

\begin{definition} \label{defRook} A \emph{rook} $n$-diagram is a rook-Brauer $n$-diagram having no left-to-left or right-to-right connections. The \emph{rook algebra} $\rook_n = \rook_n(\varepsilon)$ (determined by a ring $R$, a natural number $n$, and a parameter $\varepsilon \in R$) is the subalgebra of $\rook \Brauer_n(\delta, \varepsilon)$ on the rook diagrams. \end{definition}

The rook algebra is independent of $\delta$ (which is therefore omitted from the notation) since the absence of left-to-left and right-to-right connections means that multiplication can yield no loops.

Diagrammatically, this means we have the diagrams obtained from those in the group algebra $R \Sigma_n$ of the symmetric group by deleting strands, and a sample multiplication in $\rook_5$ is:
\begin{center}
\[
\begin{tikzpicture}[x=1.5cm,y=-.5cm,baseline=-1.05cm]
\node[v] (a1) at (0,0) {};
\node[v] (a2) at (0,1) {};
\node[v] (a3) at (0,2) {};
\node[v] (a4) at (0,3) {};
\node[v] (a5) at (0,4) {};

\node[v] (b1) at (1,0) {};
\node[v] (b2) at (1,1) {};
\node[v] (b3) at (1,2) {};
\node[v] (b4) at (1,3) {};
\node[v] (b5) at (1,4) {};

\draw[e] (a1) to[out=0, in=180] (b3);
\draw[e] (a2) to[out=0, in=180] (b2);
\draw[e] (a5) to[out=0, in=180] (b4);

\end{tikzpicture}
\quad
\cdot
\quad
\begin{tikzpicture}[x=1.5cm,y=-.5cm,baseline=-1.05cm]

\node[v] (b1) at (0,0) {};
\node[v] (b2) at (0,1) {};
\node[v] (b3) at (0,2) {};
\node[v] (b4) at (0,3) {};
\node[v] (b5) at (0,4) {};

\node[v] (c1) at (1,0) {};
\node[v] (c2) at (1,1) {};
\node[v] (c3) at (1,2) {};
\node[v] (c4) at (1,3) {};
\node[v] (c5) at (1,4) {};

\draw[e] (b1) to[out=0, in=180] (c1);
\draw[e] (b2) to[out=0, in=180] (c4);
\draw[e] (b4) to[out=0, in=180] (c2);

\end{tikzpicture}
\quad
=
\quad
\begin{tikzpicture}[x=1.5cm,y=-.5cm,baseline=-1.05cm]

\node[v] (a1) at (0,0) {};
\node[v] (a2) at (0,1) {};
\node[v] (a3) at (0,2) {};
\node[v] (a4) at (0,3) {};
\node[v] (a5) at (0,4) {};

\node[v] (b1) at (1,0) {};
\node[v] (b2) at (1,1) {};
\node[v] (b3) at (1,2) {};
\node[v] (b4) at (1,3) {};
\node[v] (b5) at (1,4) {};

\node[v] (c1) at (2,0) {};
\node[v] (c2) at (2,1) {};
\node[v] (c3) at (2,2) {};
\node[v] (c4) at (2,3) {};
\node[v] (c5) at (2,4) {};

\draw[e] (a1) to[out=0, in=180] (b3);
\draw[e] (a2) to[out=0, in=180] (b2);
\draw[e] (a5) to[out=0, in=180] (b4);

\draw[e] (b1) to[out=0, in=180] (c1);
\draw[e] (b2) to[out=0, in=180] (c4);
\draw[e] (b4) to[out=0, in=180] (c2);

\end{tikzpicture}
\quad
= \varepsilon \cdot
\quad
\begin{tikzpicture}[x=1.5cm,y=-.5cm,baseline=-1.05cm]

\node[v] (a1) at (0,0) {};
\node[v] (a2) at (0,1) {};
\node[v] (a3) at (0,2) {};
\node[v] (a4) at (0,3) {};
\node[v] (a5) at (0,4) {};

\node[v] (c1) at (1,0) {};
\node[v] (c2) at (1,1) {};
\node[v] (c3) at (1,2) {};
\node[v] (c4) at (1,3) {};
\node[v] (c5) at (1,4) {};

\draw[e] (a2) to[out=0, in=180] (c4);
\draw[e] (a5) to[out=0, in=180]   (c2);

\end{tikzpicture}
\]
\end{center}
where we have only a single factor of $\varepsilon$ because there was only a single isolated point in the middle.

The {\maxTpt} subalgebra of the rook algebra, as for the rook-Brauer algebra, is $R \Sigma_n$ itself.

\subsection{The Brauer algebras}

The Brauer algebras are distinguished by having no missing edges. They substantially predate the rook-Brauer algebra, originating in a 1937 paper of Brauer \cite{Brauer}, and their homology was studied by Boyd, Hepworth, and Patzt in \cite{BHP}.

\begin{definition} \label{defBrauer} A \emph{Brauer} $n$-diagram is a rook-Brauer $n$-diagram having no missing edges. The \emph{Brauer algebra} $\Brauer_n = \Brauer_n(\delta)$ (determined by a ring $R$, a natural number $n$, and a parameter $\delta \in R$) is the subalgebra of $\rook \Brauer_n(\delta, \varepsilon)$ on the Brauer diagrams. \end{definition}

Note that the absence of missing edges means that after concatenation of diagrams, each node in the middle has valence 2, so all connected components in the middle must be loops, and multiplication can yield no factors of $\varepsilon$. The Brauer algebra is therefore independent of $\varepsilon$, justifying the notation. A sample multiplication in $\Brauer_5$ is:

\begin{center}
\[
\begin{tikzpicture}[x=1.5cm,y=-.5cm,baseline=-1.05cm]
\node[v] (a1) at (0,0) {};
\node[v] (a2) at (0,1) {};
\node[v] (a3) at (0,2) {};
\node[v] (a4) at (0,3) {};
\node[v] (a5) at (0,4) {};

\node[v] (b1) at (1,0) {};
\node[v] (b2) at (1,1) {};
\node[v] (b3) at (1,2) {};
\node[v] (b4) at (1,3) {};
\node[v] (b5) at (1,4) {};

\draw[e] (a1) to[out=0, in=0] (a4);

\draw[e] (a5) to[out=0, in=180] (b1);
\draw[e] (a2) to[out=0, in=180] (b2);
\draw[e] (a3) to[out=0, in=180] (b3);

\draw[e] (b4) to[out=180, in=180] (b5);

\end{tikzpicture}
\quad
\cdot
\quad
\begin{tikzpicture}[x=1.5cm,y=-.5cm,baseline=-1.05cm]

\node[v] (b1) at (0,0) {};
\node[v] (b2) at (0,1) {};
\node[v] (b3) at (0,2) {};
\node[v] (b4) at (0,3) {};
\node[v] (b5) at (0,4) {};

\node[v] (c1) at (1,0) {};
\node[v] (c2) at (1,1) {};
\node[v] (c3) at (1,2) {};
\node[v] (c4) at (1,3) {};
\node[v] (c5) at (1,4) {};

\draw[e] (b5) to[out=0, in=180] (c5);

\draw[e] (b1) to[out=0, in=0] (b3);
\draw[e] (b2) to[out=0, in=0] (b4);

\draw[e] (c1) to[out=180, in=180] (c3);
\draw[e] (c2) to[out=180, in=180] (c4);

\end{tikzpicture}
\quad
=
\quad
\begin{tikzpicture}[x=1.5cm,y=-.5cm,baseline=-1.05cm]

\node[v] (a1) at (0,0) {};
\node[v] (a2) at (0,1) {};
\node[v] (a3) at (0,2) {};
\node[v] (a4) at (0,3) {};
\node[v] (a5) at (0,4) {};

\node[v] (b1) at (1,0) {};
\node[v] (b2) at (1,1) {};
\node[v] (b3) at (1,2) {};
\node[v] (b4) at (1,3) {};
\node[v] (b5) at (1,4) {};

\node[v] (c1) at (2,0) {};
\node[v] (c2) at (2,1) {};
\node[v] (c3) at (2,2) {};
\node[v] (c4) at (2,3) {};
\node[v] (c5) at (2,4) {};

\draw[e] (a1) to[out=0, in=0] (a4);

\draw[e] (a5) to[out=0, in=180] (b1);
\draw[e] (a2) to[out=0, in=180] (b2);
\draw[e] (a3) to[out=0, in=180] (b3);

\draw[e] (b4) to[out=180, in=180] (b5);

\draw[e] (b5) to[out=0, in=180] (c5);

\draw[e] (b1) to[out=0, in=0] (b3);
\draw[e] (b2) to[out=0, in=0] (b4);

\draw[e] (c1) to[out=180, in=180] (c3);
\draw[e] (c2) to[out=180, in=180] (c4);

\end{tikzpicture}
\quad
=
\quad
\begin{tikzpicture}[x=1.5cm,y=-.5cm,baseline=-1.05cm]

\node[v] (a1) at (0,0) {};
\node[v] (a2) at (0,1) {};
\node[v] (a3) at (0,2) {};
\node[v] (a4) at (0,3) {};
\node[v] (a5) at (0,4) {};

\node[v] (c1) at (1,0) {};
\node[v] (c2) at (1,1) {};
\node[v] (c3) at (1,2) {};
\node[v] (c4) at (1,3) {};
\node[v] (c5) at (1,4) {};

\draw[e] (a1) to[out=0, in=0] (a4);
\draw[e] (a3) to[out=0, in=0] (a5);

\draw[e] (a2) to[out=0, in=180] (b5);

\draw[e] (c1) to[out=180, in=180] (c3);
\draw[e] (c2) to[out=180, in=180] (c4);
\end{tikzpicture}
\]
\end{center}

As with the rook-Brauer and rook algebras, the {\maxTpt} subalgebra of the Brauer algebra is the symmetric group algebra $R \Sigma_n$.

\subsection{The Temperley-Lieb algebras}

The homology of the Temperley-Lieb algebras was studied by Boyd and Hepworth in \cite{BH} and \cite{BHComb}. This family is distinguished by being planar and having no missing edges, and originates in theoretical physics in the 1970s \cite{TemperleyLieb}.

\begin{definition} \label{defTL} A \emph{Temperley-Lieb} $n$-diagram is a rook-Brauer $n$-diagram that is planar and has no missing edges. The \emph{Temperley-Lieb algebra} $\TL_n = \TL_n(\delta)$ (determined by a ring $R$, a natural number $n$, and a parameter $\delta \in R$) is the subalgebra of $\rook \Brauer_n(\delta, \varepsilon)$ on the Temperley-Lieb diagrams. \end{definition}

In particular, this makes the Temperley-Lieb algebra a subalgebra of the Brauer algebra, and justifies the absence of $\varepsilon$ from the notation. A sample multiplication in $\TL_5$ is:
\begin{center}
\[
\begin{tikzpicture}[x=1.5cm,y=-.5cm,baseline=-1.05cm]
\node[v] (a1) at (0,0) {};
\node[v] (a2) at (0,1) {};
\node[v] (a3) at (0,2) {};
\node[v] (a4) at (0,3) {};
\node[v] (a5) at (0,4) {};

\node[v] (b1) at (1,0) {};
\node[v] (b2) at (1,1) {};
\node[v] (b3) at (1,2) {};
\node[v] (b4) at (1,3) {};
\node[v] (b5) at (1,4) {};

\draw[e] (a1) to[out=0, in=0] (a4);
\draw[e] (a2) to[out=0, in=0] (a3);

\draw[e] (a5) to[out=0, in=180] (b1);

\draw[e] (b2) to[out=180, in=180] (b3);
\draw[e] (b4) to[out=180, in=180] (b5);

\end{tikzpicture}
\quad
\cdot
\quad
\begin{tikzpicture}[x=1.5cm,y=-.5cm,baseline=-1.05cm]

\node[v] (b1) at (0,0) {};
\node[v] (b2) at (0,1) {};
\node[v] (b3) at (0,2) {};
\node[v] (b4) at (0,3) {};
\node[v] (b5) at (0,4) {};

\node[v] (c1) at (1,0) {};
\node[v] (c2) at (1,1) {};
\node[v] (c3) at (1,2) {};
\node[v] (c4) at (1,3) {};
\node[v] (c5) at (1,4) {};

\draw[e] (b1) to[out=0, in=180] (c3);
\draw[e] (b2) to[out=0, in=180] (c4);
\draw[e] (b5) to[out=0, in=180] (c5);

\draw[e] (b3) to[out=0, in=0] (b4);

\draw[e] (c1) to[out=180, in=180] (c2);
\end{tikzpicture}
\quad
=
\quad
\begin{tikzpicture}[x=1.5cm,y=-.5cm,baseline=-1.05cm]

\node[v] (a1) at (0,0) {};
\node[v] (a2) at (0,1) {};
\node[v] (a3) at (0,2) {};
\node[v] (a4) at (0,3) {};
\node[v] (a5) at (0,4) {};

\node[v] (b1) at (1,0) {};
\node[v] (b2) at (1,1) {};
\node[v] (b3) at (1,2) {};
\node[v] (b4) at (1,3) {};
\node[v] (b5) at (1,4) {};

\node[v] (c1) at (2,0) {};
\node[v] (c2) at (2,1) {};
\node[v] (c3) at (2,2) {};
\node[v] (c4) at (2,3) {};
\node[v] (c5) at (2,4) {};

\draw[e] (a1) to[out=0, in=0] (a4);
\draw[e] (a2) to[out=0, in=0] (a3);

\draw[e] (a5) to[out=0, in=180] (b1);

\draw[e] (b2) to[out=180, in=180] (b3);
\draw[e] (b4) to[out=180, in=180] (b5);

\draw[e] (b1) to[out=0, in=180] (c3);
\draw[e] (b2) to[out=0, in=180] (c4);
\draw[e] (b5) to[out=0, in=180] (c5);

\draw[e] (b3) to[out=0, in=0] (b4);

\draw[e] (c1) to[out=180, in=180] (c2);

\end{tikzpicture}
\quad
=
\quad
\begin{tikzpicture}[x=1.5cm,y=-.5cm,baseline=-1.05cm]

\node[v] (a1) at (0,0) {};
\node[v] (a2) at (0,1) {};
\node[v] (a3) at (0,2) {};
\node[v] (a4) at (0,3) {};
\node[v] (a5) at (0,4) {};

\node[v] (c1) at (1,0) {};
\node[v] (c2) at (1,1) {};
\node[v] (c3) at (1,2) {};
\node[v] (c4) at (1,3) {};
\node[v] (c5) at (1,4) {};

\draw[e] (a1) to[out=0, in=0] (a4);
\draw[e] (a2) to[out=0, in=0] (a3);

\draw[e] (a5) to[out=0, in=180] (c3);

\draw[e] (c1) to[out=180, in=180] (c2);
\draw[e] (c4) to[out=180, in=180] (c5);
\end{tikzpicture}
\]
\end{center}

This particular multiplication produces no factors of $\delta$. The possibility of constructing diagrams that multiply to give no factors of $\delta$ will be key to our Sroka-type applications.

Planarity makes the {\maxTpt} subalgebra much smaller: the only planar diagram having $n$ left-to-right connections is the identity diagram (where all edges are horizontal). The {\maxTpt} subalgebra of $\TL_n$ is therefore just a copy of $R$.

\section{The canonical group algebra retract} \label{sectionRetract}

In this section we record Proposition \ref{retract}, an observation that we used frequently in the introduction. Recall that $R$ is always assumed to be a commutative ring with unit, and $I_{n-1} \leq \rook \Brauer_n(\delta, \varepsilon)$ is the ideal with $R$-basis consisting of diagrams having fewer than $n$ left-to-right connections.

We begin with the observation that $\rook \Brauer_n(\delta, \varepsilon)$ contains a copy of the group algebra of the symmetric group. The proof is standard, hence omitted, but is a good exercise.

\begin{proposition} \label{symmetricGroup} For any $\delta, \varepsilon \in R$, and any $n$, the set $S_{\max}$ of rook-Brauer $n$-diagrams with the maximum number $n$ of left-to-right connections forms a multiplicatively closed subset of $\rook \Brauer_n(\delta, \varepsilon)$. Under the multiplication, this subset is canonically isomorphic to the symmetric group $\Sigma_n$. \end{proposition}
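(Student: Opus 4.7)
The plan is to construct an explicit bijection between $S_{\max}$ and $\Sigma_n$ and then verify that the concatenation multiplication corresponds to composition of permutations. There is no subtle mathematical difficulty here; the proof is essentially bookkeeping, but requires care that no factors of $\delta$ or $\varepsilon$ appear when multiplying diagrams in $S_{\max}$.

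First I would set up the bijection $\phi : S_{\max} \to \Sigma_n$. Each diagram has $2n$ nodes, and since $n$ left-to-right connections already use up $n$ of the $n$ left nodes and $n$ of the $n$ right nodes, no further edges are possible, and every node lies on exactly one edge. Labelling the nodes on each side as $1,\dots,n$, a diagram $x \in S_{\max}$ corresponds to a bijection $\phi(x) : \{1,\dots,n\} \to \{1,\dots,n\}$ defined by $\phi(x)(i) = j$ iff left node $i$ is joined to right node $j$. This is evidently a bijection with $\Sigma_n$.

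Next I would show that $S_{\max}$ is closed under multiplication, with no factor of $\delta$ or $\varepsilon$. Given $x,y \in S_{\max}$, form the concatenation. Each middle node (right of $x$ identified with left of $y$) has exactly one $x$-edge and exactly one $y$-edge incident to it, hence valence exactly $2$; in particular no middle node is isolated, so no factor of $\varepsilon$ appears. The $x$-edge at a middle node goes to a node on the far left, and the $y$-edge goes to a node on the far right, so no connected component in the middle is a cycle. This rules out any factor of $\delta$. Concretely, starting from left node $i$, one crosses the $x$-edge to the middle node labelled $\phi(x)(i)$, then crosses the $y$-edge to the right node labelled $\phi(y)(\phi(x)(i))$. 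After forgetting the middle nodes, this becomes a single left-to-right edge of $xy$ connecting $i$ to $\phi(y)(\phi(x)(i))$. Running over all $i$ produces $n$ left-to-right connections, so $xy \in S_{\max}$.

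Finally, the same calculation shows that $\phi(xy) = \phi(y) \circ \phi(x)$; this is the standard multiplication in the symmetric group when one uses the convention that composition of diagrams reads left-to-right while composition of functions reads right-to-left. The identity diagram (all horizontal strands) maps to the identity permutation. Thus $\phi$ is a multiplicative bijection, and the diagram $S_{\max}$ is canonically isomorphic to $\Sigma_n$ as a group. Associativity and existence of inverses are inherited from $\Sigma_n$, but also follow transparently from the diagrammatic description. The main obstacle, if any, is simply the degree-counting step that excludes loops and isolated middle components; once this is in hand, the rest is immediate.
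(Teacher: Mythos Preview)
Your proof is correct and follows essentially the same approach as the paper: construct the explicit bijection between $S_{\max}$ and $\Sigma_n$, then verify via the concatenation picture that the product of two maximal diagrams is again maximal with no factors of $\delta$ or $\varepsilon$, and that the bijection respects multiplication. The only cosmetic differences are that the paper defines the map in the reverse direction (from $\Sigma_n$ to $S_{\max}$, sending $\sigma$ to the diagram connecting right node $i$ to left node $\sigma(i)$) and does not explicitly rule out factors of $\varepsilon$, whereas you handle this cleanly via the valence-$2$ observation.
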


The key point is that if $x$ and $y$ are diagrams having the maximum number $n$ of left-to right connections then the product $xy$ again has the maximum number $n$ of left-to-right connections. This closure property does not hold in general for the set of diagrams having at least $j$ left-to-right connections - it is special to $j=n$.

The intersection of two multiplicatively closed subsets of $\rook \Brauer_n(\delta, \varepsilon)$ is again a multiplicatively closed subset. Thus, if some subalgebra $A$ of $\rook \Brauer_n(\delta, \varepsilon)$ has non-empty intersection with $S_{\max}$, then in fact $A \cap S_{\max}$ is canonically identified with a subgroup $G \leq \Sigma_n$ under the isomorphism of Proposition \ref{symmetricGroup}. In other words, we have the following corollary.

\begin{corollary} \label{subgroup} Let $A$ be a subalgebra of $\rook \Brauer_n(\delta, \varepsilon)$, which is free on a subset of the rook-Brauer diagrams, and contains at least one diagram having the maximum number $n$ of left-to-right connections. The set of diagrams in $A$ having the maximum number $n$ of left-to-right connections is multiplicatively closed, and canonically isomorphic to some subgroup $G \leq \Sigma_n$. \qed \end{corollary}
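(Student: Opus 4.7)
The plan is to apply the reasoning already sketched in the paragraph preceding the statement: Proposition \ref{symmetricGroup} identifies $S_{\max}$ with $\Sigma_n$ as monoids, so $A \cap S_{\max}$ is canonically identified with a subset of $\Sigma_n$, and it suffices to verify that this subset is closed under multiplication and is actually a subgroup.

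First I would check that $A \cap S_{\max}$ is closed under the multiplication of $\rook \Brauer_n(\delta, \varepsilon)$. Take $x, y \in A \cap S_{\max}$. The key observation, established within the proof of Proposition \ref{symmetricGroup}, is that for $x, y \in S_{\max}$ the algebra product $xy$ is \emph{literally} a rook-Brauer diagram in $S_{\max}$, with no prefactor of $\delta$ or $\varepsilon$. Since $A$ is a subalgebra we have $xy \in A$, and since $A$ is free as an $R$-module on a subset of the rook-Brauer diagrams, the diagram $xy$ must itself lie in that basis subset. This is exactly where the freeness hypothesis is used: without it, we could only conclude that $xy \in A$ as an $R$-module element, not that the diagram $xy$ is in the distinguished subset we care about.

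Second, I would invoke the standard fact that a non-empty subset of a finite group closed under multiplication is automatically a subgroup: for any $\sigma$ in such a subset, the powers $\sigma, \sigma^2, \dots$ must eventually repeat in $\Sigma_n$, forcing $\sigma^k = e$ for some $k \geq 1$, and hence $\sigma^{-1} = \sigma^{k-1}$ also lies in the subset. Non-emptiness is guaranteed by the hypothesis that $A$ contains at least one element of $S_{\max}$. Combined with Proposition \ref{symmetricGroup}, this yields the required canonical identification of $A \cap S_{\max}$ with a subgroup $G \leq \Sigma_n$.

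I expect no serious obstacles. The only subtlety worth naming is the gap between ``$xy$ lies in $A$'' and ``the diagram $xy$ lies in the diagram basis of $A$'', which the freeness hypothesis closes; everything else is a direct consequence of Proposition \ref{symmetricGroup} and the finiteness of $\Sigma_n$.
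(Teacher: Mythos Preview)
Your proposal is correct and follows essentially the same route as the paper: the paper's argument is the single sentence preceding the corollary, which observes that $A$ and $S_{\max}$ are each multiplicatively closed subsets of $\rook \Brauer_n(\delta,\varepsilon)$, so their intersection is too, and then invokes Proposition~\ref{symmetricGroup}. You have simply made explicit two points the paper leaves implicit---the use of freeness to conclude that the diagram $xy$ actually lies in the diagram basis of $A$, and the standard fact that a non-empty multiplicatively closed subset of a finite group is a subgroup---but the underlying argument is the same.
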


Taking the $R$-span of $A \cap S_{\max}$ therefore identifies a canonical subalgebra of $A$ as a group algebra. This canonical group-subalgebra of $A$ is actually a retract, as follows.

\begin{proposition} \label{retract} Let $A$ be a subalgebra of $\rook \Brauer_n(\delta, \varepsilon)$, which is free on a subset of the rook-Brauer diagrams, and contains at least one diagram having the maximum number $n$ of left-to-right connections. Let $A_{\max}$ denote the $R$-span of the diagrams in $A$ with the maximum number $n$ of left-to-right connections. The following properties are satisfied:
\begin{itemize}
    \item $A_{\max}$ is a subalgebra of $A$, and is canonically isomorphic to the group ring $RG$ of some $G \leq \Sigma_n$.
    \item $A_{\max}$ is isomorphic to the quotient $\faktor{A}{A \cap I_{n-1}}$, hence is a retract of $A$.
\end{itemize} \end{proposition}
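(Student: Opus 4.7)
The plan is straightforward and builds directly on Proposition \ref{symmetricGroup} and Corollary \ref{subgroup}. For the first bullet, Corollary \ref{subgroup} already identifies $A \cap S_{\max}$ (as a set) with a subgroup $G \leq \Sigma_n$ via the correspondence $\sigma \mapsto d_\sigma$ of Proposition \ref{symmetricGroup}. The only additional point is that $A_{\max}$ is closed under multiplication: if $x, y$ are diagrams in $A \cap S_{\max}$, then in the concatenation every middle node has valence $2$ and lies on a single path through the diagram, so no loops or contractible components arise. Hence $xy$ is itself a diagram in $S_{\max}$ (with no scalar prefactor), and it lies in $A$ since $A$ is a subalgebra. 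Bilinear extension then makes $A_{\max}$ a subalgebra whose multiplication table agrees with that of $G$, giving the canonical isomorphism $A_{\max} \cong RG$.

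For the second bullet, the key input is the freeness hypothesis. Let $B$ be the subset of rook-Brauer diagrams on which $A$ is free as an $R$-module, decompose $B = B_{\max} \sqcup B'$ where $B_{\max} = B \cap S_{\max}$, and let $M$ denote the $R$-span of $B'$ inside $A$. Since the set of \emph{all} rook-Brauer diagrams is an $R$-basis of the ambient algebra, and hence is linearly independent, an element $\sum_b \alpha_b\, b \in A$ lies in $I_{n-1}$ if and only if $\alpha_b = 0$ for every $b \in B_{\max}$. Therefore $A \cap I_{n-1} = M$, and we obtain an internal $R$-module decomposition $A = A_{\max} \oplus (A \cap I_{n-1})$.

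Because $I_{n-1}$ is a twosided ideal of $\rook \Brauer_n(\delta,\varepsilon)$, the intersection $A \cap I_{n-1}$ is a twosided ideal of $A$, so $\faktor{A}{A \cap I_{n-1}}$ is an algebra. The composite
\[
A_{\max} \hookrightarrow A \twoheadrightarrow \faktor{A}{A \cap I_{n-1}}
\]
is an $R$-linear isomorphism by the direct-sum decomposition, and is multiplicative because the product of two basis diagrams of $A_{\max}$ was shown above to land back in $A_{\max}$ with no scalar factors, so the product computed in $A$ and then projected agrees with the product computed directly in $A_{\max}$. This gives the algebra isomorphism $A_{\max} \cong \faktor{A}{A \cap I_{n-1}}$, and simultaneously exhibits the inclusion $A_{\max} \hookrightarrow A$ as a section of the quotient map, so $A_{\max}$ is a retract of $A$.

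There is no real obstacle here; the only subtle point is making sure that the freeness assumption is used correctly to identify $A \cap I_{n-1}$ with the $R$-span of $B'$. Without that hypothesis, $A$ could conceivably contain $R$-linear combinations of diagrams that cancel their $S_{\max}$-components, and the clean direct sum $A = A_{\max} \oplus (A \cap I_{n-1})$ would fail.
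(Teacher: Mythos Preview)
Your proof is correct and follows essentially the same route as the paper: both use Corollary \ref{subgroup} for the first bullet, and for the second both use the freeness hypothesis to identify $A \cap I_{n-1}$ with the span of the non-maximal basis diagrams and then observe that the composite $A_{\max} \hookrightarrow A \twoheadrightarrow \faktor{A}{A \cap I_{n-1}}$ is an isomorphism. Your version is a bit more explicit (you spell out the direct-sum decomposition and check multiplicativity of the composite), and your closing remark on why freeness is needed is a useful addition, but there is no substantive difference in strategy.
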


\begin{proof} To see the first claim, note that $A_{\max}$ is free on the set of diagrams in $A$ having the maximum number of left-to-right connections, hence is isomorphic to the required $RG$ by Corollary \ref{subgroup}.

To see the second claim, note that since $A$ is assumed to be free on a subset of the diagrams, $A \cap I_{n-1}$ has $R$-basis those diagrams in $A$ with fewer than $n$ left-to-right connections. The quotient $\faktor{A}{A \cap I_{n-1}}$ therefore has a basis given by the diagrams with precisely $n$ left-to-right connections, so the composite $A_{\max} \to A \to \faktor{A}{A \cap I_{n-1}}$ is an isomorphism, so $A_{\max} \cong \faktor{A}{A \cap I_{n-1}}$ is a retract of $A$, as required.
\end{proof}

\section{Idempotents and Ideals} \label{SectionIdempotents}

In this section, we will prove Theorem \ref{quotientingPlus}, and deduce Corollary \ref{quotientingInductively}, which is essentially an inductive version of the same thing. The word `inductive' here refers to the same basic phenomenon as in the Boyd-Hepworth \emph{inductive resolutions} of \cite{BH} and \cite{BHP}. These results belongs to pure algebra, and our main result (Theorem \ref{subalgebraOfBrauer}) is essentially the specialisation of Corollary \ref{quotientingInductively} to diagram algebras.

In this section, $R$ is a commutative unital ring, and all algebras are associative and unital, but need not be augmented. The next two lemmas are well-known (see for example \cite[Chapter 1, Exercise 2.4]{Liu}) but we include proofs because they are quite simple.

\begin{lemma} Let $A$ be an $R$-algebra, and let $e \in A$ be idempotent ($e^2=e$). Then the left ideal $A e$ generated by $e$ is a projective $A$-module. \label{singleIdempotent} \end{lemma}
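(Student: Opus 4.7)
The plan is to exhibit $Ae$ as a direct summand of the free left $A$-module $A$, since any direct summand of a free module is projective. The natural complement will be $A(1-e)$, and the key facts will be that $1-e$ is also idempotent, that $e$ and $1-e$ are orthogonal (their products in either order vanish), and that $1 = e + (1-e)$.

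More concretely, first I would check that $A = Ae + A(1-e)$: for any $a \in A$, we may write $a = a \cdot 1 = ae + a(1-e)$, so every element lies in the sum. Next, I would verify that this sum is direct. Suppose $ae = b(1-e)$ for some $a, b \in A$. Multiplying on the right by $e$ and using $e^2 = e$ gives $ae = ae^2 = b(1-e)e = b(e - e^2) = 0$. Hence $Ae \cap A(1-e) = 0$, and we obtain a direct sum decomposition $A \cong Ae \oplus A(1-e)$ of left $A$-modules.

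Since $A$ is free of rank $1$ as a left $A$-module, and $Ae$ is a direct summand of $A$, it follows that $Ae$ is projective. There is essentially no obstacle here; the only subtlety is remembering to work on the correct side (using $e^2 = e$ on the right when showing the intersection is trivial), but this falls out immediately from the orthogonality $e(1-e) = (1-e)e = 0$.
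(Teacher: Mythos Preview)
Your proof is correct and takes essentially the same approach as the paper: both exhibit $Ae$ as a direct summand of the free module $A$ via the complement $A(1-e)$. The paper phrases this as a split short exact sequence $0 \to Ae \to A \xrightarrow{\cdot(1-e)} A(1-e) \to 0$, while you write the internal direct sum $A = Ae \oplus A(1-e)$ directly, but the content is identical.
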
 

\begin{proof} The point is that the sequence of left $A$-modules $$0 \to Ae \to A \xrightarrow{\cdot(1-e)} A(1-e) \to 0$$ is split short exact. Injectivity and surjectivity are clear. To see that the composite is zero, note that an element of $Ae$ is of the form $\alpha e$ for $\alpha \in A$, and idempotency of $e$ then gives $$\alpha e \cdot (1-e) = \alpha (e - e^2) = \alpha (e-e)=0.$$ For exactness, if $\alpha(1-e)=0$, then $\alpha = \alpha e$ lies in $Ae$. For splitness, the first map is split by $\cdot e$, or alternatively the second map is split by $\cdot (1+e)$.

Splitness gives that $Ae$ is a direct summand of (the free left $A$-module) $A$, hence is projective, as required. 
\end{proof}

This lemma can be strengthened to cover ideals generated by commuting families of idempotents, as follows.

\begin{corollary} Let $A$ be an $R$-algebra, and let $J$ be a left ideal of $A$ generated by finitely many commuting idempotents $e_1, \dots, e_n$. Then $J$ is a projective $A$-module. \label{idempotency} \end{corollary}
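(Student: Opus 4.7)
The plan is to reduce to the previous lemma by showing that the ideal $J$ is in fact generated by a single idempotent. The natural candidate, borrowed from the inclusion-exclusion formula for unions of sets, is
\[
e = 1 - \prod_{i=1}^n (1 - e_i),
\]
where the order of the product does not matter because the $e_i$ (and hence the $1-e_i$) commute.

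First I would check that $e$ is idempotent. Each $1-e_i$ is idempotent, since $(1-e_i)^2 = 1 - 2e_i + e_i^2 = 1 - e_i$, and a product of pairwise commuting idempotents is idempotent, so $1-e = \prod_i (1-e_i)$ is idempotent, which gives $e^2 = e$. Next I would verify that $Ae = J$. Expanding the product shows $e$ is an $R$-linear combination of products of the $e_i$, each of which lies in $J$, so $Ae \subseteq J$. For the reverse inclusion, it suffices to show $e_i \in Ae$ for every $i$; using commutativity and $e_i(1-e_i)=0$ we compute
\[
e_i \cdot \prod_{j=1}^n (1-e_j) = e_i(1-e_i)\prod_{j \neq i}(1-e_j) = 0,
\]
so $e_i e = e_i - 0 = e_i$, and thus $e_i \in Ae$.

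Having shown $J = Ae$ for a single idempotent $e \in A$, Lemma \ref{singleIdempotent} immediately gives that $J$ is a projective left $A$-module. There is no real obstacle here: the only mild subtlety is that one needs commutativity in two places, both to conclude that the product $\prod (1-e_i)$ is idempotent and to conclude that $e_i$ commutes with the $(1-e_j)$ for $j \neq i$ so that the computation $e_i e = e_i$ goes through. Since the hypothesis is precisely that the $e_i$ pairwise commute, both uses are justified, and the proof is complete.
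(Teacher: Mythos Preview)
Your proof is correct and takes essentially the same approach as the paper: both reduce to Lemma \ref{singleIdempotent} by producing a single idempotent generator for $J$. The paper argues inductively via the two-idempotent case $e_1 + e_2 - e_1e_2$, whereas you write down the closed form $e = 1 - \prod_i(1-e_i)$; these are the same construction, since expanding your formula for $n=2$ gives exactly the paper's expression.
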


\begin{proof}  If $e_1$ and $e_2$ are commuting idempotents, then $e_1 + e_2 - e_1e_2$ is idempotent, and we have $$e_1 \cdot(e_1 + e_2 - e_1e_2) = e_1 + e_1 e_2 - e_1 e_2 = e_1,$$ and $$e_2 \cdot(e_1 + e_2 - e_1e_2) = e_2 e_1 + e_2 - e_2 e_1 e_2 = e_2 e_1 + e_2 - e_2 e_1 = e_2.$$

It follows inductively that a left ideal generated by commuting idempotents is actually generated by a single idempotent.

The result then follows from Lemma \ref{singleIdempotent}. \end{proof}

This corollary will be used to prove the next theorem, which is the key algebraic observation underlying our main result (Theorem \ref{subalgebraOfBrauer}).

\begin{theorem} \label{quotientingPlus} Let $A$ be an $R$-algebra, let $M$ be a right $A$-module and let $N$ be a left $A$-module. Let $I$ be a two-sided ideal of $A$ which acts trivially on $M$ and $N$, and which as a left ideal is a direct sum $I \cong J_1 \oplus \dots \oplus J_k$. Suppose that each $J_i$ is generated as a left ideal by finitely many commuting idempotents. Then $$\Tor^A_*(M,N) \cong \Tor^{\faktor{A}{I}}_*(M,N).$$ \end{theorem}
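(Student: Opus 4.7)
My plan is to exploit left-projectivity of $I$ to produce a length-one projective resolution of $A/I$, use it to prove that $\Tor^A_q(M, A/I) = 0$ for all $q > 0$, and thereby lift a free right $A$-resolution of $M$ to a free right $A/I$-resolution; the Tor comparison then follows by a routine associativity manipulation. The hypothesis that $I$ decomposes as a direct sum of left ideals each generated by commuting idempotents is used solely to establish (left-)projectivity of $I$ and to drive one small idempotent calculation.

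By Corollary \ref{idempotency}, each $J_i$ is projective as a left $A$-module, so the direct sum $I = J_1 \oplus \dots \oplus J_k$ is projective too, and hence $0 \to I \to A \to A/I \to 0$ is a length-one projective resolution of $A/I$ as a left $A$-module. Tensoring this resolution with $M$ on the left gives $\Tor^A_q(M, A/I) = 0$ for $q \geq 2$ automatically, and identifies $\Tor^A_1(M, A/I)$ with the kernel of the map $M \otimes_A I \to M$ sending $m \otimes i$ to $mi$; this map vanishes because $MI = 0$, so $\Tor^A_1(M, A/I) = M \otimes_A I$. The crux is therefore to show $M \otimes_A I = 0$, which is the only place where the idempotent hypothesis enters. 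Any $i \in I$ decomposes as a finite $R$-linear combination $\sum_k \alpha_k e_k$ with each $e_k$ one of the commuting idempotent generators of some $J_j$, so the balancing relation together with $e_k^2 = e_k$ gives
\[
m \otimes \alpha_k e_k = (m \alpha_k) \otimes e_k = (m \alpha_k) \otimes e_k \cdot e_k = (m \alpha_k e_k) \otimes e_k = 0,
\]
since $m \alpha_k e_k \in MI = 0$. I expect this little calculation to be the main conceptual step; everything else is formal.

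To conclude, take any free right $A$-resolution $G_\bullet \to M$. Applying $- \otimes_A A/I$ produces a complex of free right $A/I$-modules whose higher homology is $\Tor^A_{>0}(M, A/I) = 0$ and which augments onto $M \otimes_A A/I = M/MI = M$, hence is a free right $A/I$-resolution of $M$. Then
\[
\Tor^{A/I}_*(M, N) = H_*\bigl((G_\bullet \otimes_A A/I) \otimes_{A/I} N\bigr) = H_*(G_\bullet \otimes_A N) = \Tor^A_*(M, N),
\]
where the middle equality is the standard associativity isomorphism $(G_i \otimes_A A/I) \otimes_{A/I} N \cong G_i \otimes_A N$, valid because the hypothesis $IN = 0$ equips $N$ with the structure of a left $A/I$-module.
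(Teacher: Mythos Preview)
Your argument is essentially identical to the paper's proof: both compute $\Tor^A_*(M,A/I)$ via the short projective resolution $0 \to I \to A \to A/I \to 0$, reduce to showing $M \otimes_A I = 0$ by the idempotent trick $m \otimes ae = m \otimes ae^2 = (mae) \otimes e = 0$, and then conclude that $(-)\otimes_A A/I$ carries a free $A$-resolution of $M$ to a free $A/I$-resolution. One small slip to fix: elements of $I$ are left $A$-linear combinations of the idempotent generators, not $R$-linear ones (the $e_k$ generate $I$ as a left ideal, not as an $R$-module); your displayed computation goes through verbatim with $\alpha_k \in A$, and this is exactly the paper's formulation $m \otimes a e$.
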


Before proving this theorem, we note that it implies the following corollary, by taking a chain of ideals $0 = I_{-1} \leq I_0 \leq I_1 \leq \dots \leq I_m \leq A,$ and $\ell \geq 0$, and supposing that for each $i \geq \ell$, the hypothesis of that theorem holds for the image of each $I_i$ inside $\faktor{A}{I_{i-1}}$.

\begin{corollary} \label{quotientingInductively} Let $A$ be an $R$-algebra, let $M$ be a right $A$-module and let $N$ be a left $A$-module. Let $0 \leq \ell \leq m$. Suppose that we have a chain of two-sided ideals $$0=I_{-1} \leq I_0 \leq I_1 \leq \dots \leq I_m \leq A,$$ where each $I_i$ acts trivially on $M$ and $N$, and for $i \geq \ell$, as left $A$-modules we have an isomorphism $$\faktor{I_i}{I_{i-1}} \cong \faktor{J_{i,1}}{I_{i-1}} \oplus \dots \oplus \faktor{J_{i,k_i}}{I_{i-1}},$$ for left ideals $J_{i,j}$ generated by finitely many commuting idempotents (the sum being direct means equivalently that $J_{i,j} \cap J_{i,j'} \subset I_{i-1}$ whenever $j \neq j'$). Then \[\pushQED{\qed} 
\Tor^{\faktor{A}{I_{\ell-1}}}_*(M,N) \cong \Tor^{\faktor{A}{I_\ell}}_*(M,N) \cong \dots \cong \Tor^{\faktor{A}{I_m}}_*(M,N). \qedhere
\popQED
\]\end{corollary}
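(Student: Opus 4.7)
The plan is a straightforward induction on $i$, applying Theorem \ref{quotientingPlus} once at each level $\ell \leq i \leq m$. The inductive step at level $i$ consists of invoking that theorem with the algebra replaced by $\faktor{A}{I_{i-1}}$, the two-sided ideal replaced by $\faktor{I_i}{I_{i-1}}$, and with $M$ and $N$ regarded as $\faktor{A}{I_{i-1}}$-modules, which is possible because $I_{i-1}$ acts trivially on each of them by hypothesis.

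To invoke Theorem \ref{quotientingPlus} at level $i$ I would verify its three hypotheses in turn. First, $\faktor{I_i}{I_{i-1}}$ is a two-sided ideal of $\faktor{A}{I_{i-1}}$ because $I_i$ is two-sided in $A$, and it acts trivially on $M$ and $N$ because $I_i$ does. Second, the direct sum decomposition of $\faktor{I_i}{I_{i-1}}$ as a left $\faktor{A}{I_{i-1}}$-module is given by hypothesis. Third, each summand $\faktor{J_{i,j}}{I_{i-1}}$ is generated as a left ideal of $\faktor{A}{I_{i-1}}$ by finitely many commuting idempotents; this is immediate, since the images of the commuting idempotent generators of $J_{i,j}$ remain commuting idempotents in the quotient and generate the image of $J_{i,j}$.

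Applying Theorem \ref{quotientingPlus} at level $i$, and using the third isomorphism theorem to identify the further quotient with $\faktor{A}{I_i}$, then yields an isomorphism $\Tor^{\faktor{A}{I_{i-1}}}_*(M,N) \cong \Tor^{\faktor{A}{I_i}}_*(M,N)$. Concatenating these isomorphisms for $i = \ell, \ell+1, \dots, m$ produces the desired chain. There is no genuine obstacle here: by design, the hypotheses of the corollary are precisely what is needed to unlock a single application of Theorem \ref{quotientingPlus} at each level, so the substantive content lies entirely in that theorem, while this corollary is only the inductive bootstrap.
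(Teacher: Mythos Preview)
Your proposal is correct and follows exactly the approach the paper takes: the paper's own justification (given just before the corollary statement) is simply that one applies Theorem~\ref{quotientingPlus} to the image of $I_i$ inside $\faktor{A}{I_{i-1}}$ for each $i \geq \ell$, and you have carried out this verification in detail. Your observation that the images of the commuting idempotent generators of $J_{i,j}$ remain commuting idempotents modulo $I_{i-1}$ is precisely what is needed, and indeed the paper remarks afterwards that this is slightly more than is required (idempotence and commutativity only modulo $I_{i-1}$ would suffice).
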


This corollary is the main result of this section: it is the algebra underlying our main result, Theorem \ref{subalgebraOfBrauer}. The artificial asymmetry between the roles played by $\ell$ and $m$ (that the chain of ideals $I_i$ is assumed to be defined even when $i \leq \ell$) becomes more natural in that theorem.

\begin{remark} Theorem \ref{quotientingPlus} actually implies a bit more than Corollary \ref{quotientingInductively}: the generators of the $J_{i,j}$ need only commute and be idempotent \emph{modulo $I_{i-1}$}, but we will not need this extra generality. \end{remark}

\begin{proof}[Proof of Theorem \ref{quotientingPlus}] We will show that $\Tor^A_*(M,N)$ and $\Tor^{\faktor{A}{I}}_*(M,N)$ are the homology of the same chain complex.

Let $P_*$ be a free resolution (by right $A$-modules) of $M$ over $A$, so that $\Tor^A_*(M,N)$ is the homology of $P_* \otimes_A N$. We have
\begin{align*}
    P_* \otimes_A N & \cong P_* \otimes_A A \otimes_A N \\
    & \cong (P_* \otimes_A \faktor{A}{I}) \otimes_{\faktor{A}{I}} N,
\end{align*} since $I$ acts trivially on $N$. Since $P_*$ consists of free $A$-modules, each $$P_i \otimes_A \faktor{A}{I} \cong (\bigoplus A) \otimes_A \faktor{A}{I} \cong \bigoplus \faktor{A}{I}$$ is a free (right) $\faktor{A}{I}$-module. It therefore suffices to show that $P_* \otimes_A \faktor{A}{I}$ is a resolution of $M$, which is to say we must show that the homology of this complex, which in degree $q$ is $\Tor_q^A(M,\faktor{A}{I})$, is $M$ when $q=0$ and $0$ otherwise.

By Corollary \ref{idempotency}, $I \cong J_1 \oplus \dots \oplus J_k$ is a direct sum of projective $A$-modules, hence also projective. The exact sequence $A \xleftarrow{} I \xleftarrow{} 0$ is therefore a (very short) projective resolution of $\faktor{A}{I}$ over $A$, so $\Tor_*^A(M,\faktor{A}{I})$ is the homology of $M \otimes A \xleftarrow{} M \otimes I \xleftarrow{} 0$. Now, $I$ is assumed to act trivially on $M$, and the direct sum decomposition of $I$ implies that $M \otimes I$ is generated by elements of the form $m \otimes a e$, for $e$ one of the idempotent generators of some $J_i$, $a \in A$, and $m \in M$. Then we may write $$m \otimes a e = m \otimes a e^2 = (m \cdot ae ) \otimes e = 0,$$ since $ae \in I$ is assumed to act trivially on $M$. Since $M \otimes I$ is generated by such elements, we have $M \otimes I \cong 0$, and the result follows. \end{proof}

\section{Applications: The rook algebras} \label{sectionRook}

Recall from Definition \ref{defRook} that the rook algebra $\rook_n(\varepsilon)$ is the subalgebra of $\rook \Brauer_n(\delta, \varepsilon)$ on those diagrams having no left-to-left or right-to-right connections. In this section we will give our first application, Theorem \ref{rookInvertible}: when $\varepsilon$ is invertible, the homology of the rook algebras coincides with that of the symmetric groups. This is the simplest application of our framework, essentially because the idempotents we will identify actually commute in the algebra itself, without taking successive quotients. In particular, the case $k=0$ of Theorem \ref{quotientingPlus}, which we proved in the last section, will suffice to play the role that Theorem \ref{subalgebraOfBrauer} will play in later applications. The statement of this special case is as follows.

\begin{theorem} \label{quotienting} Let $A$ be an $R$-algebra, let $M$ be a right $A$-module and let $N$ be a left $A$-module. Let $I$ be a two-sided ideal of $A$ which acts trivially on $M$ and $N$, and which as a left ideal is generated by finitely many commuting idempotents. Then \[\pushQED{\qed} 
\Tor^A_*(M,N) \cong \Tor^{\faktor{A}{I}}_*(M,N). \qedhere
\popQED
\] \end{theorem}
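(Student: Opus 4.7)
The proof is a near-triviality: this is precisely the case $k=1$ of Theorem \ref{quotientingPlus}, where the single summand $J_1$ is taken to be $I$ itself. Since $I$ is by hypothesis generated as a left ideal by finitely many commuting idempotents, the decomposition $I \cong J_1$ (with $J_1 = I$) satisfies the hypotheses of Theorem \ref{quotientingPlus}, and the conclusion follows. So my plan is simply to cite Theorem \ref{quotientingPlus} and observe that the hypotheses specialise correctly.

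For readers who want to see the mechanism without chasing the reference, I would also sketch the direct argument, which is a stripped-down version of the proof of Theorem \ref{quotientingPlus}. The key inputs are (i) Corollary \ref{idempotency}, which says that $I$ is a projective left $A$-module, so $0 \to I \to A \to \faktor{A}{I} \to 0$ is a length-one projective resolution of $\faktor{A}{I}$; and (ii) the observation that $M \otimes_A I = 0$, since every element can be written as $m \otimes ae$ with $e$ one of the idempotent generators of $I$, and then
\[
m \otimes ae \;=\; m \otimes ae^2 \;=\; (m \cdot ae) \otimes e \;=\; 0
\]
using that $ae \in I$ acts trivially on $M$. Together these show that $\Tor_q^A(M, \faktor{A}{I}) = 0$ for $q \geq 1$ and equals $M$ for $q=0$.

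Having established that $\faktor{A}{I}$ is acyclic in the relevant sense, the final step is to take a free resolution $P_*$ of $M$ over $A$, identify
\[
P_* \otimes_A N \;\cong\; (P_* \otimes_A \faktor{A}{I}) \otimes_{\faktor{A}{I}} N
\]
using that $I$ acts trivially on $N$, and note that $P_* \otimes_A \faktor{A}{I}$ is then a free resolution of $M$ over $\faktor{A}{I}$. Taking homology computes $\Tor^{\faktor{A}{I}}_*(M,N)$ on the right and $\Tor^A_*(M,N)$ on the left, giving the isomorphism. There is no real obstacle here, because all the heavy lifting has already been done in Theorem \ref{quotientingPlus}; the only thing to check is that the hypothesis `$I$ generated by commuting idempotents' is exactly what is needed to run the single-summand case of that earlier argument.
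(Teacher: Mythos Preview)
Your proposal is correct and matches the paper's own treatment: the theorem is presented there (with a \qed box and no separate proof) as the special case $k=1$ of Theorem~\ref{quotientingPlus}, exactly as you say, and your optional sketch of the direct argument faithfully reproduces the proof of Theorem~\ref{quotientingPlus} in this single-summand situation. (The paper actually writes ``$k=0$'' in the sentence introducing this theorem, but in the Sketch of Methods it says $k=1$, which is the correct value and agrees with you.)
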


For us, $I$ will be the ideal $\rook_n(\varepsilon) \cap I_{n-1}$ of rook diagrams having fewer than $n$ left-to-right connections. To apply the theorem, we first have to find some idempotents.

Let $\rho_i$ be the element of $\rook_n(\varepsilon)$ obtained from the identity diagram by deleting the $i$-th strand, so that for example $\rho_2$ in $\rook_5$ is the diagram
\begin{center}
\begin{tikzpicture}
\fill (0,0)           circle (.75mm) node[left=2pt](a5) {};
\fill (0,\nodeheight) circle (.75mm) node[left=2pt](a4) {};
\multiply\nodeheight by 2
\fill (0,\nodeheight) circle (.75mm) node[left=2pt](a3) {};
\divide\nodeheight by 2
\multiply\nodeheight by 3
\fill (0,\nodeheight) circle (.75mm) node[left=2pt](a2) {};
\divide\nodeheight by 3
\multiply\nodeheight by 4
\fill (0,\nodeheight) circle (.75mm) node[left=2pt](a1) {};
\divide\nodeheight by 4

\fill (\nodewidth,0) circle (.75mm) node[right=2pt](b5) {};
\fill (\nodewidth,\nodeheight) circle (.75mm) node[right=2pt](b4) {};
\multiply\nodeheight by 2
\fill (\nodewidth,\nodeheight) circle (.75mm) node[right=2pt](b3) {};
\divide\nodeheight by 2
\multiply\nodeheight by 3
\fill (\nodewidth,\nodeheight)  circle (.75mm) node[right=2pt](b2) {};
\divide\nodeheight by 3
\multiply\nodeheight by 4
\fill (\nodewidth,\nodeheight)  circle (.75mm) node[right=2pt](b1) {};
\divide\nodeheight by 4

\draw[e] (a1) to[out=0, in=180] (b1);
\draw[e] (a2) to[out=0, in=180] (b2);
\draw[e] (a3) to[out=0, in=180] (b3);
\draw[e] (a5) to[out=0, in=180] (b5); \end{tikzpicture}
\end{center}

For each $i$ we have the identity $\rho_i^2= \varepsilon \rho_i$, and hence the following lemma.

\begin{lemma} If $\varepsilon \in R$ is invertible, then $\varepsilon^{-1} \rho_i$ is idempotent. \qed \label{invertibleVarepsilon}
\end{lemma}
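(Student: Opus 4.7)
The plan is to reduce the lemma to the stated identity $\rho_i^2 = \varepsilon \rho_i$, which is essentially a diagrammatic calculation, and then finish by a one-line algebraic manipulation.

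First I would verify the identity $\rho_i^2 = \varepsilon \rho_i$ directly from the multiplication rule. Concatenating $\rho_i$ with itself, every strand at position $j \neq i$ in the left copy joins up with the corresponding horizontal strand in the right copy, producing (after forgetting the middle vertices) a single horizontal strand from the left node $j$ to the right node $j$. At position $i$, both copies have a missing edge, so after identifying the $i$-th right node of the first $\rho_i$ with the $i$-th left node of the second $\rho_i$ we obtain an isolated vertex in the middle. Forgetting this middle vertex produces a single contractible component (an isolated point), which by the multiplication rule contributes exactly one factor of $\varepsilon$. No loops arise (as is automatic for rook diagrams), so the product is $\varepsilon$ times the diagram $\rho_i$ itself.

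Granting $\rho_i^2 = \varepsilon \rho_i$, the lemma is immediate: if $\varepsilon$ is invertible in $R$, then
\[
(\varepsilon^{-1} \rho_i)^2 = \varepsilon^{-2} \rho_i^2 = \varepsilon^{-2} \cdot \varepsilon \rho_i = \varepsilon^{-1} \rho_i,
\]
so $\varepsilon^{-1} \rho_i$ is idempotent. There is no real obstacle here; the only subtlety is keeping track of which middle component produces the factor of $\varepsilon$, and that is dictated directly by the bookkeeping in the definition of rook-Brauer multiplication.
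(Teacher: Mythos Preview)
Your proof is correct and matches the paper's approach exactly: the paper states the identity $\rho_i^2 = \varepsilon \rho_i$ just before the lemma and marks the lemma with \qed, treating the conclusion as immediate. You have simply filled in the diagrammatic verification of $\rho_i^2 = \varepsilon \rho_i$ and the one-line rescaling, both of which are correct.
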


\begin{lemma} \label{myFirstIdeal} Regardless of whether $\varepsilon \in R$ is invertible, the left ideal $\rook_n(\varepsilon) \cdot \rho_i < \rook_n(\varepsilon)$ has $R$-basis consisting of those diagrams where the $i$-th node on the right is not connected to anything on the left. \end{lemma}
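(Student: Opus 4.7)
The plan is to proceed by a direct analysis of right-multiplication by $\rho_i$ on a single rook diagram $d$, together with a short counting argument giving the reverse inclusion. I first show that $\rook_n(\varepsilon) \cdot \rho_i$ is contained in the $R$-span of the diagrams whose $i$-th right node is isolated (equivalently, not connected to anything on the left, since there are no right-to-right edges in a rook diagram), and then show that every such diagram arises as $d \cdot \rho_i$ for some rook diagram $d$.

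For the forward direction, I would analyze the concatenation $d \cdot \rho_i$ vertex by vertex. The $i$-th middle vertex receives no edge from the $\rho_i$ side, because $\rho_i$'s $i$-th left node is isolated, while every other middle vertex has a horizontal strand of $\rho_i$ passing through it. If the $i$-th right node of $d$ is already isolated, the $i$-th middle vertex is fully isolated, contributes a factor of $\varepsilon$, and all other edges of $d$ pass through unchanged, yielding $d \cdot \rho_i = \varepsilon \cdot d$. If instead the $i$-th right node of $d$ is connected to some $j$-th left node of $d$, then that edge ends at the isolated middle vertex; following the convention illustrated by the worked example in the introduction (``one may think of contracting this edge down to its initial node''), the dangling piece simply disappears without contributing a factor of $\varepsilon$, and $d \cdot \rho_i$ equals $d$ with the edge from $j$ to $i$ deleted. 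Either way, the result lies in the $R$-span of diagrams whose $i$-th right node is isolated.

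For the reverse inclusion, given any rook diagram $d'$ with $i$-th right node isolated, the number of left-to-right edges of $d'$ is at most $n - 1$, so at least one left node $j$ of $d'$ must also be unconnected. Let $d$ be the rook diagram obtained from $d'$ by adjoining the single edge from $j$ to the $i$-th right node. By the second subcase above, $d \cdot \rho_i$ deletes this edge and recovers $d'$ exactly, with no factor of $\varepsilon$. Hence $d' = d \cdot \rho_i \in \rook_n(\varepsilon) \cdot \rho_i$, and the argument is insensitive to whether $\varepsilon$ is invertible.

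The main technical point is the correct reading of the multiplication rule when a middle vertex is incident to only one edge, so that something ``dangles'': one must verify that the dangling piece is absorbed without contributing a factor of $\varepsilon$, which is precisely the content of the introduction's remark about contracting an edge down to its initial node. Given this, the rest is a short two-step argument, and the counting observation that a rook diagram with an isolated right node must also have an isolated left node is immediate from the balance of endpoints of left-to-right edges.
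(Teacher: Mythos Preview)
Your proof is correct and follows essentially the same approach as the paper. The reverse inclusion is identical: given a diagram $d'$ with the $i$-th right node isolated, find a vacant left node $j$, adjoin the edge $j\text{--}i$ to obtain $d$, and observe $d\rho_i=d'$. The paper actually omits the forward inclusion entirely, treating it as obvious; your explicit case analysis of $d\rho_i$ (including the careful check that a dangling edge at the middle vertex does not produce a factor of $\varepsilon$) is a welcome addition rather than a departure.
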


\begin{proof} Let $y$ be a diagram where the $i$-th node on the right is not connected to anything on the left. There must be at least one vacant node, say $j$, on the left of $y$. Let $y'$ be the diagram obtained by adding a left-to-right connection from $i$ to $j$. We then have $y' \rho_i = y$, so $y$ is contained in $\rook_n(\varepsilon) \cdot \rho_i$. Since the latter is closed under $R$-linear combinations, the result follows. \end{proof}

We are now ready for our first application.

\begin{theorem} \label{rookInvertible} Let $\varepsilon \in R$ be invertible, and let $\rook_n = \rook_n(\varepsilon)$. Let $\Sigma_n$ be the symmetric group, and let $R \Sigma_n$ be the group algebra. Then $$\Tor_*^{\rook_n}(\t,\t) \cong \Tor_*^{R \Sigma_n}(\t,\t) =: H_*(\Sigma_n;\t).$$ \end{theorem}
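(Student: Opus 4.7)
The plan is to apply Theorem \ref{quotienting} with $A = \rook_n(\varepsilon)$, $M = N = \t$, and $I = \rook_n(\varepsilon) \cap I_{n-1}$, the two-sided ideal spanned by rook diagrams having strictly fewer than $n$ left-to-right connections. Once the hypotheses are verified, the conclusion $\Tor_*^{\rook_n}(\t,\t) \cong \Tor_*^{\rook_n/I}(\t,\t)$ combines with Proposition \ref{retract}, which identifies $\rook_n/I$ with the canonical subalgebra $R\Sigma_n$ of $\rook_n$, to give the theorem. That $I$ acts trivially on $\t$ is essentially automatic: since rook diagrams have no left-to-left or right-to-right connections, any rook diagram in $I$ must have at least one missing edge, and diagrams with missing edges act as zero by Definition \ref{defTriv}.

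The substantive step is to exhibit $I$ as a left ideal generated by finitely many commuting idempotents, for which I would take $e_i := \varepsilon^{-1}\rho_i$ for $i = 1,\dots,n$. These are idempotent by Lemma \ref{invertibleVarepsilon}. Commutativity comes from a short diagram chase: for $i \neq j$, concatenating $\rho_i$ and $\rho_j$ yields middle nodes of degree $2$ at every position $k \notin \{i,j\}$ (giving a straight strand $k \to k$), while the middle nodes at positions $i$ and $j$ are each the endpoint of a length-$1$ path whose other end (right $i$ and left $j$ respectively) becomes vacant upon forgetting the middle. The resulting diagram is the identity with both strands $i$ and $j$ deleted; it is manifestly symmetric in $i,j$ and involves no scalar factor, so $e_i e_j = e_j e_i$. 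Lemma \ref{myFirstIdeal} then identifies $\rook_n \cdot e_i$ with the $R$-span of diagrams whose $i$-th right node is vacant, and summing over $i$ recovers exactly $I$.

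The main thing to be careful about is the multiplication rule: specifically, that a middle vertex connected by a single edge to an outside vertex (with no continuation across the concatenation) contributes nothing to the scalar $\varepsilon^a\delta^b$ and merely leaves that outside vertex vacant in the product, so that only genuinely isolated middle components produce factors of $\varepsilon$. Provided this bookkeeping is done correctly, the commutativity computation and the identification of the ideal both fall out immediately. Because the $e_i$ already commute in $\rook_n$ itself, one does not need the inductive stratification of Theorem \ref{subalgebraOfBrauer} here; the bare Theorem \ref{quotienting} suffices, which is precisely why this is the simplest application in the paper.
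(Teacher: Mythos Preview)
Your proposal is correct and follows essentially the same route as the paper: apply Theorem \ref{quotienting} to the ideal generated by the commuting idempotents $\varepsilon^{-1}\rho_i$, invoke Lemma \ref{myFirstIdeal} to identify this ideal with $\rook_n \cap I_{n-1}$, and use Proposition \ref{retract} to recognise the quotient as $R\Sigma_n$. The only cosmetic difference is that the paper defines $I$ as the ideal generated by the $\rho_i$ and then identifies it with the span of diagrams missing a connection, whereas you start from $\rook_n \cap I_{n-1}$ and show it is generated by the $e_i$; your explicit commutativity check for $\rho_i\rho_j$ is exactly what the paper's phrase ``directly verified'' is gesturing at.
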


\begin{proof} Let $I= \rook_n \rho_1 + \dots + \rook_n \rho_n$ be the left ideal generated by the $\rho_i$. This ideal is equivalently generated by the elements $\varepsilon^{-1} \rho_i$. It is directly verified that these elements commute, and they are idempotent by Lemma \ref{invertibleVarepsilon}. By definition $I$ acts trivially on $\t$, so we may apply Theorem \ref{quotienting} with $M=N=\t$, and it suffices to identify the quotient $\faktor{\rook_n}{I}$.

A diagram having at least one missing left-to-right connection lies in some $\rook_n \rho_i \subset I$ by Lemma \ref{myFirstIdeal}. It follows that $I$ has $R$-basis consisting of the diagrams with at least one missing left-to-right connection. The quotient $\faktor{\rook_n}{I}$ then has $R$-basis consisting of the diagrams with no missing left-to-right connections, which is to say (Proposition \ref{retract}) that $\faktor{\rook_n}{I} \cong R \Sigma_n$. Since this isomorphism carries the trivial module for $\rook_n$ to the trivial module for the symmetric group, this identifies the right hand side of the isomorphism of Theorem \ref{quotienting} with the homology of the symmetric group, completing the proof. \end{proof}

\section{Specialisation to diagram algebras} \label{SectionDiagram}

This section is devoted to the proof of the main result of this paper, Theorem \ref{subalgebraOfBrauer}, which may be thought of as a specialisation of Corollary \ref{quotientingInductively} to diagram algebras.

Recall that $I_i$ denotes the two-sided ideal of $\rook \Brauer_n(\delta, \varepsilon)$ with $R$-basis consisting of the diagrams having at most $i$ left-to-right connections. Recall also that $P_i$ denotes the set of link states with precisely $i$ defects, and that for $p \in P_i$, $J_p$ denotes the left ideal of $\rook \Brauer_n(\delta, \varepsilon)$ which is the $R$-span of diagrams having right link state obtained from $p$ by a (perhaps empty) sequence of {\mOne}s and {\mTwo}s.

\begin{proof}[Proof of Theorem \ref{subalgebraOfBrauer}] We are given $\ell, m$ with $0 \leq \ell \leq m \leq n-1$, and $A$, a subalgebra of $\rook \Brauer_n(\delta, \varepsilon)$, such that \begin{itemize}
    \item as an $R$-module, $A$ is free on a subset of the rook-Brauer $n$-diagrams, and
    \item for $i$ in the range $\ell \leq i \leq m$, whenever $A$ contains at least one diagram with some right link state $p \in P_i$, then $A$ contains an idempotent $e_p$ such that in $A$ we have equality of left ideals $A \cdot e_p = A \cap J_p$.
\end{itemize} We must deduce a chain of isomorphisms $$\Tor^{\faktor{A}{A \cap I_{\ell-1}}}_*(\t,\t) \cong \Tor^{\faktor{A}{A \cap I_{\ell}}}_*(\t,\t) \cong \dots \cong \Tor^{\faktor{A}{A \cap I_{m}}}_*(\t,\t).$$

We wish to apply Corollary \ref{quotientingInductively} to the chain of two-sided ideals $$0 = A \cap I_{-1} \leq A \cap I_0 \leq A \cap I_1 \leq \dots \leq A \cap I_{m} \leq A,$$ where $I_i$ is the two-sided ideal of $\rook \Brauer_n(\delta, \varepsilon)$ with $R$-basis diagrams having at most $i$ left-to-right connections. Since $A$ is assumed to be generated as an $R$-module by a subset of the rook-Brauer $n$-diagrams, $A \cap I_i$ admits the same description: it is free as an $R$-module on those diagrams \emph{in $A$} which have at most $i$ left-to-right connections.

Let $i$ lie in the range $\ell \leq i \leq m$. First note that $I_i$ acts trivially on $\t$ for $i \leq n-1$, so $A \cap I_i$ also does. This is the reason for the requirement $m \leq n-1$. Note that since we assume $\ell \geq 0$, it makes sense to speak about $I_{\ell-1}$ (under the convention that $I_{-1}=0$).

The quotient $\faktor{I_i}{I_{i-1}}$ has $R$-basis given by the diagrams with precisely $i$ left-to-right connections. Since every such diagram has \emph{some} right link state, we get the equality $$\faktor{I_i}{I_{i-1}} = \sum_{p \in P_i} \faktor{J_p}{I_{i-1}}.$$ Parenthetically, we note that it is only necessary to work modulo $I_{i-1}$ here because not every diagram having at most $i$ left-to-right connections is actually obtained from one having precisely $i$ left-to-right connections by a sequence of {\mOne}s and {\mTwo}s (this is for the silly reason that a {\mOne} removes \emph{two} defects, thereby `skipping' from $i$ to $i-2$). Other solutions to this technical irritation are surely possible, but we will need to work modulo $I_{i-1}$ later anyway. Thus, since $A$, $I_i$, $I_{i-1}$ and each $J_p$ are free on a subset of the rook-Brauer diagrams, $$\faktor{A \cap I_i}{A \cap I_{i-1}} = \sum_{p \in P_i} \faktor{A \cap J_p}{A \cap I_{i-1}}.$$ We assumed that each $A \cap J_p$ is a principal left ideal generated by some idempotent $e_p$, so in particular each $A \cap J_p$ is generated by a family of commuting idempotents.

It remains only to show that the above sum is actually direct, that is, that for $p \neq q \in P_i$, the intersection of $A \cap J_p$ and $A \cap J_q$ is contained in $A \cap I_{i-1}$. It suffices to show that $J_p \cap J_q \subset I_{i-1}$: at this stage working modulo $I_{i-1}$ really is materially important. To see this, recall that $J_p$ is free as an $R$-module on diagrams having right link state obtained from $p$ by a sequence of {\mOne}s and {\mTwo}s. $J_p \cap J_q$ is therefore generated by diagrams with right link states that can be obtained from \emph{both} $p$ and $q$ by such a sequence, and if $p$ and $q$ are distinct with $i$ defects each of these must involve at least one {\mOne} or {\mTwo}, which reduces the number of defects, yielding a diagram in $I_{i-1}$. This completes the proof. \end{proof}

\section{Applications: Invertible parameters} \label{applicationsInvertible}

The remainder of this paper treats applications of Theorem \ref{subalgebraOfBrauer}, roughly in order of increasing complexity. In this section we give the `invertible-parameter' applications.

These applications will be powered by the following lemma. Recall that the \emph{Brauer algebra} $\Brauer_n(\delta)$ is the subalgebra of $\RB_n(\delta,\varepsilon)$ on diagrams having no missing edges.

\begin{lemma} \label{mirrorDiagram} Let $p \in P_i$ be a link state with no missing edges. Let $d_p$ be the diagram whose left and right link states are both $p$, and all of whose edges are horizontal. If $y$ lies in $\Brauer_n(\delta) \cap J_p$, then $$y d_p = \delta^{\frac{1}{2}(n-i)} y.$$ \end{lemma}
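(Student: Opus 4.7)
The plan is to unpack the structure of $d_p$ and $y$, then analyze the multiplication $y \cdot d_p$ by tracing connections through the middle of the concatenated diagram. By linearity it suffices to take $y$ to be a single Brauer diagram in $J_p$. First, I would observe that $d_p$ decomposes into three kinds of edges: the $(n-i)/2$ left-to-left arcs prescribed by $p$, their mirror images as right-to-right arcs, and $i$ horizontal left-to-right strands connecting each defect of $p$ straight across. Next, I would argue that since $y$ is a Brauer diagram its right link state $p'$ has no bare nodes; and because $p$ has no bare nodes to begin with, while a {\mTwo} produces a bare node that no subsequent {\mOne} can undo, the sequence of operations taking $p$ to $p'$ must consist of {\mOne}s only. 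Therefore $p'$ contains every arc of $p$, together with some extra arcs obtained by pairing up defects of $p$.

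Then I would examine the concatenation $y \cdot d_p$ by classifying middle nodes according to their role in $p$. At each arc of $p$ with endpoints $k, k''$, the right side of $y$ carries an arc between $k$ and $k''$ (inherited from $p'$) and the left of $d_p$ carries the mirror arc at those same positions; the two close up into a loop in the middle, yielding one factor of $\delta$, while the right-to-right arc at $k, k''$ on the right of $d_p$ survives unchanged. At each pair of defects $k, k''$ of $p$ that were spliced to form an arc of $p'$, the right-to-right arc of $y$ between $k$ and $k''$ meets the two horizontal strands of $d_p$ at those positions, producing a single right-to-right arc between $k$ and $k''$ in the product, with no loop. At each unspliced defect $k$ (still a defect in $p'$), the left-to-right strand of $y$ ending at right-$k$ concatenates with the horizontal strand of $d_p$ at $k$ to give a left-to-right strand of the product. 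Finally, the left-to-left arcs of $y$ are untouched by concatenation. In particular there are no isolated middle nodes and no contractible middle components, so no factors of $\varepsilon$ arise, consistent with working in $\Brauer_n(\delta)$.

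The loop count is exactly one per arc of $p$, that is $(n-i)/2$ in total, giving the scalar $\delta^{(n-i)/2}$. Comparing the residual diagram with $y$ edge by edge: the left-to-left arcs agree by inspection, the left-to-right strands connect the same left endpoints of $y$ to the same defect positions of $p'$ on the right, and the right-to-right arcs match $p'$, which is exactly the right link state of $y$. Hence the residual diagram equals $y$, and $y d_p = \delta^{(n-i)/2} y$. The only real obstacle is bookkeeping: keeping the three roles of middle nodes (arc endpoints of $p$, spliced defects, unspliced defects) cleanly separated so that loops can only form at arcs of $p$. Once this classification is in hand the count, and the identification of the leftover diagram with $y$, are immediate.
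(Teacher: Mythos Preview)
Your proof is correct and follows essentially the same approach as the paper's: reduce to a single diagram $y$, argue that the right link state of $y$ is obtained from $p$ by splices only, and then do a case analysis on the middle nodes to show that every connection of $y$ reappears in $y d_p$ and that loops arise exactly once per arc of $p$. The paper organises the case analysis by the type of connection in $y$ (left-to-right, left-to-left, right-to-right appearing in $p$ or not), whereas you organise it by the role of the middle node (arc endpoint of $p$, spliced defect, unspliced defect); these are the same breakdown viewed from different sides, and your bookkeeping is slightly more explicit.
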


In other words, $\Brauer_n(\delta) \cap J_p$ is contained in the eigenspace of $d_p$ having eigenvalue $\delta^{\frac{1}{2}(n-i)}$. We are reassured that $\frac{1}{2}(n-i)$ is an integer by noticing that it is precisely the number of connections in $p$ (we use here the fact that $p$ has no missing edges).

\begin{proof} We give an illustrative example (Example \ref{mirrorDiagramEx}) after the proof. The reader may find it helpful to `carry this example along'.

First, since the multiplication is bilinear, it suffices to prove the lemma when $y$ is a diagram.

We will argue that the underlying diagram of $y d_p$ has all connections from $y$, and differs from $y$ only in the possible appearance of some factors of $\delta$. In the process, we will see that the power of $\delta$ which appears must be the number of connections in $p$.

First consider the form $y$ must take. Since $y$ lies in $\Brauer_n(\delta)$, it must have no missing edges. This means that any sequence of {\mOne}s and {\mTwo}s which witnesses the fact that $y$ lies in $J_p$ must consist entirely of {\mOne}s. In other words, the right link state of $y$ is obtained from $p$ by a sequence of {\mOne}s.

Since $d_p$ has horizontal connections everywhere apart from at the connections of $p$, it immediately follows that each connection in $p$ gives a copy of $\delta$, and every other connection of $y$ is preserved, as required. This completes the proof. \end{proof}

\begin{example} \label{mirrorDiagramEx} For illustration, we give an example of the equation $$y d_p = \delta^{\frac{1}{2}(n-i)} y.$$

We let \begin{center}
\[
p = 
\quad
\begin{tikzpicture}[x=1.5cm,y=-.5cm,baseline=-1.05cm]

\node[v] (b1) at (1,0) {};
\node[v] (b2) at (1,1) {};
\node[v] (b3) at (1,2) {};
\node[v] (b4) at (1,3) {};
\node[v] (b5) at (1,4) {};

\draw[e] (0,2) to[out=0, in=180] (b3);
\draw[e] (0,1) to[out=0, in=180] (b2);
\draw[e] (0,4) to[out=0, in=180] (b5);

\draw[e] (b1) to[out=180, in=180] (b4);

\end{tikzpicture}
\quad
, \textrm{ so that }
d_p =
\quad
\begin{tikzpicture}[x=1.5cm,y=-.5cm,baseline=-1.05cm]

\node[v] (b1) at (0,0) {};
\node[v] (b2) at (0,1) {};
\node[v] (b3) at (0,2) {};
\node[v] (b4) at (0,3) {};
\node[v] (b5) at (0,4) {};

\node[v] (c1) at (1,0) {};
\node[v] (c2) at (1,1) {};
\node[v] (c3) at (1,2) {};
\node[v] (c4) at (1,3) {};
\node[v] (c5) at (1,4) {};

\draw[e] (b3) to[out=0, in=180] (c3);
\draw[e] (b5) to[out=0, in=180] (c5);
\draw[e] (b2) to[out=0, in=180] (c2);

\draw[e] (b1) to[out=0, in=0] (b4);

\draw[e] (c1) to[out=180, in=180] (c4);
\end{tikzpicture}
\quad
,
\]
\end{center}
and suppose that we are given
\begin{center}
\[
y =
\begin{tikzpicture}[x=1.5cm,y=-.5cm,baseline=-1.05cm]
\node[v] (a1) at (0,0) {};
\node[v] (a2) at (0,1) {};
\node[v] (a3) at (0,2) {};
\node[v] (a4) at (0,3) {};
\node[v] (a5) at (0,4) {};

\node[v] (b1) at (1,0) {};
\node[v] (b2) at (1,1) {};
\node[v] (b3) at (1,2) {};
\node[v] (b4) at (1,3) {};
\node[v] (b5) at (1,4) {};

\draw[e] (a1) to[out=0, in=180] (b2);

\draw[e] (a2) to[out=0, in=0] (a3);
\draw[e] (a4) to[out=0, in=0] (a5);

\draw[e] (b1) to[out=180, in=180] (b4);
\draw[e] (b3) to[out=180, in=180] (b5);

\end{tikzpicture}
\quad
\in \Brauer_n(\delta) \cap J_p.
\]
\end{center}

Note that the right link state of $y$ is obtained from $p$ by only a single {\mOne}. Note also that $\frac{1}{2}(n-i)=\frac{1}{2}(5-3)=1$. We can now compute:
\begin{center}
\[
\begin{tikzpicture}[x=1.5cm,y=-.5cm,baseline=-1.05cm]
\node[v] (a1) at (0,0) {};
\node[v] (a2) at (0,1) {};
\node[v] (a3) at (0,2) {};
\node[v] (a4) at (0,3) {};
\node[v] (a5) at (0,4) {};

\node[v] (b1) at (1,0) {};
\node[v] (b2) at (1,1) {};
\node[v] (b3) at (1,2) {};
\node[v] (b4) at (1,3) {};
\node[v] (b5) at (1,4) {};

\draw[e] (a1) to[out=0, in=180] (b2);

\draw[e] (a2) to[out=0, in=0] (a3);
\draw[e] (a4) to[out=0, in=0] (a5);

\draw[e] (b1) to[out=180, in=180] (b4);
\draw[e] (b3) to[out=180, in=180] (b5);

\end{tikzpicture}
\quad
\cdot
\quad
\begin{tikzpicture}[x=1.5cm,y=-.5cm,baseline=-1.05cm]

\node[v] (b1) at (0,0) {};
\node[v] (b2) at (0,1) {};
\node[v] (b3) at (0,2) {};
\node[v] (b4) at (0,3) {};
\node[v] (b5) at (0,4) {};

\node[v] (c1) at (1,0) {};
\node[v] (c2) at (1,1) {};
\node[v] (c3) at (1,2) {};
\node[v] (c4) at (1,3) {};
\node[v] (c5) at (1,4) {};

\draw[e] (b3) to[out=0, in=180] (c3);
\draw[e] (b5) to[out=0, in=180] (c5);
\draw[e] (b2) to[out=0, in=180] (c2);

\draw[e] (b1) to[out=0, in=0] (b4);

\draw[e] (c1) to[out=180, in=180] (c4);
\end{tikzpicture}
\quad
= 
\quad
\begin{tikzpicture}[x=1.5cm,y=-.5cm,baseline=-1.05cm]

\node[v] (a1) at (0,0) {};
\node[v] (a2) at (0,1) {};
\node[v] (a3) at (0,2) {};
\node[v] (a4) at (0,3) {};
\node[v] (a5) at (0,4) {};

\node[v] (b1) at (1,0) {};
\node[v] (b2) at (1,1) {};
\node[v] (b3) at (1,2) {};
\node[v] (b4) at (1,3) {};
\node[v] (b5) at (1,4) {};

\node[v] (c1) at (2,0) {};
\node[v] (c2) at (2,1) {};
\node[v] (c3) at (2,2) {};
\node[v] (c4) at (2,3) {};
\node[v] (c5) at (2,4) {};

\draw[e] (a1) to[out=0, in=180] (b2);

\draw[e] (a2) to[out=0, in=0] (a3);
\draw[e] (a4) to[out=0, in=0] (a5);

\draw[e] (b1) to[out=180, in=180] (b4);
\draw[e] (b3) to[out=180, in=180] (b5);

\draw[e] (b3) to[out=0, in=180] (c3);
\draw[e] (b5) to[out=0, in=180] (c5);
\draw[e] (b2) to[out=0, in=180] (c2);

\draw[e] (b1) to[out=0, in=0] (b4);

\draw[e] (c1) to[out=180, in=180] (c4);

\end{tikzpicture}
\quad
= \delta \cdot
\quad
\begin{tikzpicture}[x=1.5cm,y=-.5cm,baseline=-1.05cm]

\node[v] (a1) at (0,0) {};
\node[v] (a2) at (0,1) {};
\node[v] (a3) at (0,2) {};
\node[v] (a4) at (0,3) {};
\node[v] (a5) at (0,4) {};

\node[v] (c1) at (1,0) {};
\node[v] (c2) at (1,1) {};
\node[v] (c3) at (1,2) {};
\node[v] (c4) at (1,3) {};
\node[v] (c5) at (1,4) {};

\draw[e] (a1) to[out=0, in=180] (b2);

\draw[e] (a2) to[out=0, in=0] (a3);
\draw[e] (a4) to[out=0, in=0] (a5);

\draw[e] (c3) to[out=180, in=180] (c5);
\draw[e] (c1) to[out=180, in=180] (c4);

\end{tikzpicture}
\quad
\]
\end{center}
\end{example}

\subsection{The (rook-)Brauer algebras} \label{subsectionInvBrauer}

Recall from Definition \ref{defBrauer} that the \emph{Brauer algebra} $\Brauer_n(\delta)$ is the subalgebra of $\RB_n(\delta,\varepsilon)$ on diagrams having no missing edges.

Consider first the full rook-Brauer algebra $\RB_n(\delta,\varepsilon)$. First, if $\varepsilon$ is invertible, notice that the same analysis as for the rook algebra applies: we still have the elements $\rho_i$ of Section \ref{sectionRook}, and applying Theorem \ref{quotienting} to the two-sided ideal they generate immediately gives the following theorem.

\begin{theorem} \label{rookBrauerInvertible} If $\varepsilon \in R$ is invertible, then (for any $\delta \in R$) we get \[\pushQED{\qed} 
\Tor_*^{\RB_n(\delta, \varepsilon)}(\t,\t) \cong \Tor_*^{\Brauer_n(\delta)}(\t,\t).\qedhere
\popQED
\] \end{theorem}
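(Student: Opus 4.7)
The plan is to imitate the proof of Theorem~\ref{rookInvertible} as closely as possible. The elements $\rho_i \in \RB_n(\delta,\varepsilon)$ of Section~\ref{sectionRook} continue to satisfy $\rho_i^2 = \varepsilon\rho_i$ and to commute pairwise in $\RB_n(\delta,\varepsilon)$: since the $\rho_i$'s themselves have no left-to-left or right-to-right edges, the relevant diagrammatic calculations are identical to those carried out in $\rook_n$. Hence $e_i := \varepsilon^{-1}\rho_i$ are commuting idempotents.

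Next I would consider the two-sided ideal $I \trianglelefteq \RB_n(\delta,\varepsilon)$ generated by the $\rho_i$'s. The rook-Brauer adaptation of Lemma~\ref{myFirstIdeal} is the direct computation that, for any diagram $d$ having its $i$-th right node missing, one has $d\rho_i = \varepsilon d$, and so $d = \varepsilon^{-1} d\rho_i \in \RB_n \rho_i \subseteq I$; combined with the symmetric argument on the left, $I$ contains every rook-Brauer diagram with at least one missing edge, and I would want to establish that $I$ has exactly this $R$-basis. By Definition~\ref{defTriv}, every such diagram acts as $0$ on $\t$, so $I$ acts trivially on $\t$.

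Theorem~\ref{quotienting} then applies with $M = N = \t$, the left ideal $I$ being generated by the finitely many commuting idempotents $e_i$, and gives $\Tor^{\RB_n(\delta,\varepsilon)}_*(\t,\t) \cong \Tor^{\RB_n(\delta,\varepsilon)/I}_*(\t,\t)$. The quotient has $R$-basis the diagrams with no missing edges, i.e., the Brauer diagrams, and the induced multiplication agrees with that of $\Brauer_n(\delta)$, since no middle vertex of a product of two Brauer diagrams can become isolated and hence no factor of $\varepsilon$ can appear. This identifies $\RB_n(\delta,\varepsilon)/I \cong \Brauer_n(\delta)$ as algebras and yields the claimed isomorphism. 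I expect the main obstacle to be the verification that $I$ has exactly the stated basis: in the rook case this was immediate because the number of through-strands is monotonically non-increasing under multiplication in $\rook_n$, but in $\RB_n$ the presence of left-to-left and right-to-right connections permits more intricate interactions in the middle column, so one must argue carefully that products of the form $a\rho_i b$ always remain inside the $R$-span of diagrams with missing edges.
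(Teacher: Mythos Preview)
Your approach is exactly the paper's: both invoke Theorem~\ref{quotienting} with the idempotents $\varepsilon^{-1}\rho_i$ and aim to identify the quotient with $\Brauer_n(\delta)$. Your instinct that identifying the ideal is the crux is correct, and in fact this step \emph{fails}. The $R$-span $J$ of rook-Brauer diagrams with at least one missing edge is not an ideal of $\RB_n(\delta,\varepsilon)$. For $n=3$, let $d$ have left nodes $1,2$ missing, a connection left~$3$ -- right~$1$, and a connection right~$2$ -- right~$3$; let $a$ be the Brauer diagram with $L_1$--$L_2$, $L_3$--$R_3$, $R_1$--$R_2$. Then $a\cdot d=\varepsilon\, b$ where $b$ is the Brauer diagram $L_1$--$L_2$, $L_3$--$R_1$, $R_2$--$R_3$, so $a\cdot d\notin J$. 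Since $d=\varepsilon^{-1}\rho_1 d$, we get $b=\varepsilon^{-2}\,a\,\rho_1\,d\in I$, so the two-sided ideal $I$ generated by the $\rho_i$ strictly contains $J$ and $\RB_n/I\not\cong\Brauer_n$. Thus the verification you flagged (``products $a\rho_i b$ remain in the span of diagrams with missing edges'') is not merely delicate: it is false.

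There is a second, related problem. Theorem~\ref{quotienting} requires the two-sided ideal to be generated \emph{as a left ideal} by commuting idempotents. The $\varepsilon^{-1}\rho_i$ generate, as a left ideal, only the span of diagrams with a missing \emph{right} node (this is the correct rook-Brauer analogue of Lemma~\ref{myFirstIdeal}); the element $b$ above is not in that span, so these idempotents do not left-generate $I$. Hence the hypotheses of Theorem~\ref{quotienting} are not met, and the argument does not close. The paper's own proof is the single clause ``the same analysis as for the rook algebra applies'', so it is silent on precisely the point you worried about; the gap you anticipated is genuine.
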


Boyd, Hepworth, and Patzt \cite{BHP} have proven homological stability for $\Brauer_n(\delta)$. Combining their result with Theorem \ref{rookBrauerInvertible} gives the following corollary.

\begin{corollary} \label{funOne} If $\varepsilon \in R$ is invertible, then, for any $\delta \in R$, the stabilisation map 
$$\Tor_q^{\RB_{n-1}(\delta, \varepsilon)}(\t,\t) \to \Tor_q^{\RB_{n}(\delta, \varepsilon)}(\t,\t)$$ is an isomorphism for $n \geq 2q+1$. \qed \end{corollary}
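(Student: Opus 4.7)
The plan is to combine Theorem \ref{rookBrauerInvertible} with the Boyd--Hepworth--Patzt stability theorem for the Brauer algebras by showing that the isomorphism of Theorem \ref{rookBrauerInvertible} is natural with respect to the stabilisation map. Once naturality is in hand, the stability range $n \geq 2q+1$ transfers from the Brauer side to the rook-Brauer side by a diagram chase.

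First I would recall the set-up. The stabilisation map $\RB_{n-1}(\delta,\varepsilon) \to \RB_n(\delta,\varepsilon)$ is the algebra homomorphism given on diagrams by adding a single horizontal strand at the top (say). This map sends $\Brauer_{n-1}(\delta) \subset \RB_{n-1}$ into $\Brauer_n(\delta) \subset \RB_n$, since adding a horizontal strand does not create missing edges. Meanwhile, the proof of Theorem \ref{rookBrauerInvertible} realises the isomorphism $\Tor_*^{\RB_n(\delta,\varepsilon)}(\t,\t) \cong \Tor_*^{\Brauer_n(\delta)}(\t,\t)$ via an application of Theorem \ref{quotienting} to the two-sided ideal $I^{(n)}$ of $\RB_n$ generated by the elements $\rho_1^{(n)},\dots,\rho_n^{(n)}$ (with $R$-basis consisting of those diagrams having at least one missing left-to-right connection), together with the identification $\RB_n / I^{(n)} \cong \Brauer_n(\delta)$ of Proposition-type arguments analogous to those in Section \ref{sectionRook}.

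Next I would check naturality. The stabilisation map carries $\rho_i^{(n-1)}$ to $\rho_i^{(n)}$ for $i < n$, so it sends $I^{(n-1)}$ into $I^{(n)}$ and hence induces a map of quotients $\RB_{n-1}/I^{(n-1)} \to \RB_n / I^{(n)}$ which, after the identifications $\RB_k/I^{(k)}\cong \Brauer_k(\delta)$, is precisely the Brauer stabilisation map. Because the proof of Theorem \ref{quotienting} (and hence of Theorem \ref{rookBrauerInvertible}) identifies $\Tor^A_*(\t,\t)$ with $\Tor^{A/I}_*(\t,\t)$ via the natural map on $\Tor$ induced by the quotient $A \twoheadrightarrow A/I$, one gets a commutative square
\[
\begin{tikzcd}
\Tor_q^{\RB_{n-1}(\delta,\varepsilon)}(\t,\t) \arrow[r,"\cong"] \arrow[d] & \Tor_q^{\Brauer_{n-1}(\delta)}(\t,\t) \arrow[d] \\
\Tor_q^{\RB_{n}(\delta,\varepsilon)}(\t,\t) \arrow[r,"\cong"] & \Tor_q^{\Brauer_{n}(\delta)}(\t,\t)
\end{tikzcd}
\]
in which the horizontal arrows are isomorphisms by Theorem \ref{rookBrauerInvertible} and the vertical arrows are the stabilisation maps.

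Finally, the Boyd--Hepworth--Patzt theorem of \cite{BHP} asserts that the right-hand vertical map is an isomorphism for $n \geq 2q+1$, so the left-hand vertical map is as well, which is the claim. The main (and essentially only) obstacle is the naturality verification in the middle paragraph: one has to be attentive to the fact that the isomorphism of Theorem \ref{rookBrauerInvertible} is not an abstract coincidence but is implemented by the concrete quotient map, and that the chosen ideal of missing-edge diagrams in $\RB_{n-1}$ maps into the analogous ideal in $\RB_n$ under stabilisation. This last point is geometrically obvious (a missing edge stays missing when one appends a horizontal strand), which is what makes the whole argument essentially formal.
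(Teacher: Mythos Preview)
Your approach is exactly the paper's: combine Theorem \ref{rookBrauerInvertible} with the Boyd--Hepworth--Patzt stability theorem for $\Brauer_n(\delta)$. The paper records no more than that one sentence before the \qed, so your commutative-square argument is genuinely filling in the naturality check that the paper leaves implicit. The key observations --- that stabilisation carries each $\rho_i^{(n-1)}$ to $\rho_i^{(n)}$, hence $I^{(n-1)}$ into $I^{(n)}$, and that the isomorphism produced by Theorem \ref{quotienting} is the one induced by the quotient map --- are the right ones.

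One small slip: your parenthetical description of $I^{(n)}$ as having $R$-basis ``those diagrams having at least one missing left-to-right connection'' does not match your claimed identification $\RB_n/I^{(n)} \cong \Brauer_n(\delta)$; the span of diagrams with fewer than $n$ left-to-right connections includes many Brauer diagrams (those with caps and cups), so that quotient would be $R\Sigma_n$. Fortunately your actual naturality argument does not use this basis description --- it only uses that $I^{(n)}$ is generated by the $\rho_i^{(n)}$ and that the paper's identification of the quotient with $\Brauer_n(\delta)$ holds --- so the slip is cosmetic. Just delete the parenthetical, or replace it with the correct description of the ideal used in the proof of Theorem \ref{rookBrauerInvertible}.
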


We can also use Theorem \ref{subalgebraOfBrauer} to recover the following result of Boyd, Hepworth, and Patzt.

\begin{theorem}[\cite{BHP}] \label{BrauerRecovery} If $\delta \in R$ is invertible, then
$$ \Tor_*^{\Brauer_n(\delta)}(\t,\t) \cong \Tor_*^{R \Sigma_n}(\t,\t).$$ \end{theorem}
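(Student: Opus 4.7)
The plan is to apply Theorem \ref{subalgebraOfBrauer} with $A = \Brauer_n(\delta)$, $\ell = 0$, and $m = n-1$. Once the hypotheses are verified, the conclusion gives $\Tor^{\Brauer_n(\delta)}_*(\t,\t) \cong \Tor^{\Brauer_n(\delta)/(\Brauer_n(\delta) \cap I_{n-1})}_*(\t,\t)$, and Proposition \ref{retract} identifies the right-hand side with $\Tor^{R\Sigma_n}_*(\t,\t)$, since the {\maxTpt} subalgebra of $\Brauer_n(\delta)$ is $R\Sigma_n$. The first hypothesis of Theorem \ref{subalgebraOfBrauer}, namely freeness of $\Brauer_n(\delta)$ as an $R$-module on a subset of rook-Brauer diagrams, is immediate from Definition \ref{defBrauer}.

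The work is therefore to exhibit, for each $0 \leq i \leq n-1$ and each $p \in P_i$ such that $\Brauer_n(\delta)$ contains a diagram with right link state $p$, an idempotent $e_p \in \Brauer_n(\delta)$ with $\Brauer_n(\delta) \cdot e_p = \Brauer_n(\delta) \cap J_p$. Since Brauer diagrams have no missing edges, any such link state $p$ must itself have no missing edges, so $p$ consists of $(n-i)/2$ connections together with $i$ defects. I would set
\[
e_p := \delta^{-(n-i)/2} d_p,
\]
where $d_p$ is the horizontal-edge diagram from Lemma \ref{mirrorDiagram}. Since $\delta$ is invertible and $(n-i)/2$ is a nonnegative integer, this makes sense in $\Brauer_n(\delta)$.

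The key step is then to verify the two required properties using Lemma \ref{mirrorDiagram}. First, $d_p$ has right link state $p$, so $d_p \in \Brauer_n(\delta) \cap J_p$; applying the lemma with $y = d_p$ yields $d_p^2 = \delta^{(n-i)/2} d_p$, whence $e_p^2 = e_p$. Second, for the equality of ideals, the inclusion $\Brauer_n(\delta) \cdot e_p \subseteq \Brauer_n(\delta) \cap J_p$ holds because $e_p \in J_p$ and $J_p$ is a left ideal of $\RB_n(\delta,\varepsilon)$ (and is preserved on intersection with the subalgebra $\Brauer_n(\delta)$). For the reverse inclusion, given any $y \in \Brauer_n(\delta) \cap J_p$, Lemma \ref{mirrorDiagram} gives $y \cdot d_p = \delta^{(n-i)/2} y$, hence $y \cdot e_p = y$, so $y \in \Brauer_n(\delta) \cdot e_p$.

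The main (mild) obstacle is really just the bookkeeping: one must confirm that the relevant $p$ really have no missing edges so that Lemma \ref{mirrorDiagram} applies and the exponent $(n-i)/2$ is an integer, and one must check that $I_i$ acts trivially on $\t$ for $i \leq n-1$ so that the hypothesis $m \leq n-1$ of Theorem \ref{subalgebraOfBrauer} is in force. Both are straightforward: the first because the right link state of a Brauer diagram inherits the absence of missing edges, and the second because a diagram with fewer than $n$ left-to-right connections must contain either a left-to-left or a right-to-right connection, and hence acts as zero on $\t$ by Definition \ref{defTriv}. With these observations in place, Theorem \ref{subalgebraOfBrauer} and Proposition \ref{retract} combine to give the result.
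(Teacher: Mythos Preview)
Your proposal is correct and follows essentially the same approach as the paper: apply Theorem \ref{subalgebraOfBrauer} with $\ell=0$, $m=n-1$, take $e_p = \delta^{-(n-i)/2} d_p$, and use Lemma \ref{mirrorDiagram} both to verify idempotency and to establish the equality $\Brauer_n(\delta)\cdot e_p = \Brauer_n(\delta)\cap J_p$. The only cosmetic difference is that you invoke Proposition \ref{retract} explicitly for the identification with $R\Sigma_n$, whereas the paper states this directly.
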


\begin{proof} Since $\Brauer_n = \Brauer_n(\delta)$ is free as an $R$-module on a subset of the rook-Brauer diagrams, we may attempt to apply Theorem \ref{subalgebraOfBrauer} with $\ell =0$ and $m = n-1$. This will give the correct conclusion, since $\faktor{\Brauer_n}{\Brauer_n \cap I_{n-1}} \cong R \Sigma_n$.

Let $i$ lie in the range $0 \leq i \leq n-1$. Let $p \in P_i$ be a link state with $i$ defects, that actually occurs as the right link state of some diagram in $\Brauer_n$. This means precisely that $p$ has no missing edges. We must construct an idempotent $e_p $ which generates $\Brauer_n \cap J_p$ as a left ideal.

Let $d_p$ be the diagram whose left and right link states are both $p$, and all of whose edges are horizontal. Lemma \ref{mirrorDiagram} (with $y=d_p$) gives $(d_p)^2 = \delta^{\frac{1}{2}(n-i)} d_p$, so $e_p = \delta^{-\frac{1}{2}(n-i)} d_p$ is an idempotent.

We must establish the equality of left ideals $\Brauer_n \cdot e_p = \Brauer_n \cap J_p$. The inclusion $\Brauer_n \cdot e_p  \subset \Brauer_n \cap J_p$ follows immediately, since $e_p$ lies in $J_p$ by construction. To see the reverse inclusion, let $y \in \Brauer_n \cap J_p$. Lemma \ref{mirrorDiagram} gives that $y d_p = \delta^{\frac{1}{2}(n-i)} y$, so $y e_p = y$, and so $y$ lies in $\Brauer_n  \cdot e_p$.

Thus, $\Brauer_n \cap J_p = \Brauer_n \cdot e_p$, and the result follows. \end{proof}

\subsection{The Temperley-Lieb algebras} \label{subsectionInvTL}

Recall that the \emph{Temperley-Lieb algebra} $\TL_n(\delta)$ is the subalgebra of $\Brauer_n(\delta) \subset \RB_n(\delta,\varepsilon)$ on planar diagrams (having no missing edges).

First, note that the analogue of Theorem \ref{rookBrauerInvertible} does not hold here (at least, it does not hold for the same reason). Namely, letting the \emph{rook-Temperley-Lieb} algebra $\rook \TL_n(\delta, \varepsilon)$ be the subalgebra of $\rook \Brauer_n(\delta,\varepsilon)$ on the planar diagrams (i.e. we now allow missing edges), we see that not every rook-Temperley-Lieb diagram can be `extended' to a Temperley-Lieb diagram by adding edges, since this might violate planarity. Here is one of the two simplest possible examples of such a diagram, which occurs in $\rook \TL_2$:

\begin{center}
\begin{tikzpicture}
\fill (0,0)           circle (.75mm) node[left=2pt](a5){};
\fill (0,\nodeheight) circle (.75mm) node[left=2pt](a4){};

\fill (\nodewidth,0) circle (.75mm) node[right=2pt](b5){};
\fill (\nodewidth,\nodeheight) circle (.75mm) node[right=2pt](b4){};

\draw[e] (a5) to[out=0, in=180] (b4); \end{tikzpicture}
\end{center}

This means that the rook-Temperley-Lieb algebra is not generated by its Temperley-Lieb subalgebra plus the elements $\rho_i$, so the key observation that powered the proof of Theorem \ref{rookBrauerInvertible} fails in this context.

On the other hand, we are able to recover the following result of Boyd and Hepworth, which is the analogue of Theorem \ref{BrauerRecovery}.

\begin{theorem}[\cite{BH}] If $\delta \in R$ is invertible, then $\Tor_q^{\TL_n(\delta)}(\t,\t) = 0$ for $q>0$. \label{TLRecovery} \end{theorem}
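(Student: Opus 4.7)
The plan is to mimic the proof of Theorem \ref{BrauerRecovery}, applying Theorem \ref{subalgebraOfBrauer} to $A = \TL_n(\delta)$ with $\ell=0$ and $m=n-1$. Since $\TL_n$ is free as an $R$-module on the planar rook-Brauer $n$-diagrams with no missing edges, the first hypothesis is immediate. The {\maxTpt} subalgebra of $\TL_n$ is just $R$, since the only planar diagram with $n$ left-to-right connections is the identity diagram. Thus Proposition \ref{retract} gives $\faktor{\TL_n}{\TL_n \cap I_{n-1}} \cong R$, and
\[\Tor^R_q(\t,\t) = \Tor^R_q(R,R) = 0\]
for $q > 0$, so the chain of isomorphisms furnished by Theorem \ref{subalgebraOfBrauer} yields the desired vanishing.

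It remains to verify the idempotent hypothesis: for each $i$ with $0 \leq i \leq n-1$ and each $p \in P_i$ appearing as the right link state of some Temperley-Lieb diagram, I must produce an idempotent $e_p \in \TL_n$ with $\TL_n \cdot e_p = \TL_n \cap J_p$. The link states of interest are precisely the planar link states with no missing edges. As in the Brauer case, define $d_p$ to be the diagram whose left and right link states are both equal to $p$, with all edges between corresponding defects drawn horizontally. Planarity of $p$ (together with the horizontal, non-crossing nature of the defect-connecting edges) guarantees planarity of $d_p$, and absence of missing edges in $p$ guarantees the same for $d_p$, so $d_p \in \TL_n$.

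Since $\TL_n \subset \Brauer_n(\delta)$, Lemma \ref{mirrorDiagram} applies. Taking $y = d_p$ gives $d_p^2 = \delta^{(n-i)/2} d_p$, so invertibility of $\delta$ allows me to set $e_p := \delta^{-(n-i)/2} d_p$, which is idempotent. The inclusion $\TL_n \cdot e_p \subseteq \TL_n \cap J_p$ is immediate from $e_p \in J_p$. For the reverse inclusion, any $y \in \TL_n \cap J_p$ also lies in $\Brauer_n \cap J_p$, so Lemma \ref{mirrorDiagram} gives $y d_p = \delta^{(n-i)/2} y$, whence $y e_p = y$, which places $y$ in $\TL_n \cdot e_p$. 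This verifies the hypotheses of Theorem \ref{subalgebraOfBrauer} and completes the proof.

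The only step requiring real care is the verification that $d_p$ is planar, i.e. that reflecting a planar link state about the vertical axis and joining corresponding defects by horizontal segments produces a planar diagram. This is straightforward: the arches of $p$ live on their respective sides and cannot cross each other, and the horizontal defect-connecting edges lie at heights disjoint from those spanned by any arch, so no crossings are introduced. Beyond this minor geometric check, the argument is formally identical to the Brauer case, with the smaller {\maxTpt} subalgebra $R$ (rather than $R\Sigma_n$) responsible for the stronger conclusion of outright vanishing.
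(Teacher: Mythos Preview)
Your proof is correct and follows essentially the same approach as the paper's: apply Theorem \ref{subalgebraOfBrauer} with $\ell=0$, $m=n-1$, build the idempotent $e_p = \delta^{-(n-i)/2}d_p$ from the mirror diagram $d_p$, and use Lemma \ref{mirrorDiagram} to verify both idempotency and the ideal equality $\TL_n \cdot e_p = \TL_n \cap J_p$. The paper phrases the planarity of $d_p$ as ``no defect of $p$ lies inside the arc of any connection'' and explicitly flags that the equation $y = y e_p$ exhibits $y$ as lying in the $\TL_n$-span (not merely the $\Brauer_n$-span) of $e_p$, but these are exactly the points you cover in your final paragraph.
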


What follows is essentially the same as the proof of Theorem \ref{BrauerRecovery}, except that we have to say everything a little more carefully to ensure planarity.

\begin{proof} Since $\TL_n = \TL_n(\delta)$ is free as an $R$-module on a subset of the rook-Brauer diagrams, we may attempt to apply Theorem \ref{subalgebraOfBrauer}, with $\ell = 0$ and $m = n-1$. This will give the correct conclusion: the only $\TL_n$-diagram having $n$ left-to-right connections is the identity, so $\faktor{\TL_n}{\TL_n \cap I_{n-1}} \cong R$.

Let $p \in P_i$ be a link state with $i$ defects, occurring as the right link state of some diagram in $\TL_n$. This means that $p$ has no missing edges and that none of its defects lie inside the arc of any of its connections. We must construct an idempotent $e_p $ which generates $\TL_n \cap J_p$ as a left ideal.

Take $d_p$ to be the diagram whose left and right link states are both $p$, with all edges horizontal. Since no defects of $p$ lie inside the arc of any of its connections, $d_p$ is planar, and gives a diagram in $\TL_n$.

By Lemma \ref{mirrorDiagram}, $(d_p)^2 = \delta^{\frac{1}{2}(n-i)} d_p$, so setting $e_p = \delta^{-\frac{1}{2}(n-i)} d_p$ gives an idempotent. We must now show that $\TL_n \cdot e_p = \TL_n \cap J_p$. The inclusion $\TL_n \cdot e_p \subset \TL_n \cap J_p$ follows immediately, since $e_p$ lies in $J_p$ by construction, and the reverse inclusion again follows by applying Lemma \ref{mirrorDiagram} to an arbitrary $y \in \TL_n \cap J_p$. It is important here that in the equation $y d_p = \delta^{\frac{1}{2}(n-i)} y$, the $y$ appearing on the left is planar (though this is somewhat tautological) because this shows that $y$ lies in the left $\TL_n$-span (rather than just the $\Brauer_n$-span) of $d_p$, hence of $e_p$.

We are therefore done by Theorem \ref{subalgebraOfBrauer}. \end{proof}

\section{Applications: Sroka-type theorems} \label{applicationsSroka}

In this section, we prove Theorem \ref{generalisedSroka} and Theorem \ref{generalisedBrauerSroka}. The basic pattern is as in the last section, but we now use Lemma \ref{LSControl} to verify the hypotheses of Theorem \ref{subalgebraOfBrauer}, in place of Lemma \ref{mirrorDiagram}. The extra complexity in this lemma is because we are no longer assuming that $\delta$ is invertible - we must now avoid producing factors of $\delta$, because we can no longer just rescale to get rid of them.

\subsection{Double and sesqui- diagrams}

Given two diagrams $x$ and $y$, the first step in computing the product $x \cdot y$ is to concatenate the two diagrams. We introduce the following ad hoc language, which will be useful for discussing these `concatenated, but not yet resolved' diagrams, and will help in making some of the proofs rigorous.

\begin{definition} Given diagrams $x$ and $y$, the \emph{double diagram} $(x,y)$ consists of three columns of $n$ nodes, having $x$ as its left half, and $y$ as its right half. In other words, we think of $x$ and $y$ as graphs, and identify the right hand column of nodes in $x$ with the left hand column of nodes in $y$. 

We will write, counting from top to bottom, $l_1, \dots l_n$ for the nodes on the left, $m_1, \dots m_n$ for the nodes in the middle, and $r_1, \dots r_n$ for the nodes on the right. Intrinsically, we think of the double diagram as a choice of a set of edges connecting this fixed set of nodes. For nodes $$u,v \in \{ l_1, \dots l_n , m_1, \dots m_n, r_1, \dots r_n\},$$ we write $$u \underset{x,y}{\sim} v$$ if there is a sequence of edges in the double diagram $(x,y)$ connecting $u$ and $v$. The relation $\underset{x,y}{\sim}$ is an equivalence relation on the set of symbols $\{ l_1, \dots l_n , m_1, \dots m_n, r_1, \dots, r_n\}$, and this equivalence relation determines the algebra product $xy$. \end{definition}

\begin{example} In $\Brauer_7$, if 

\begin{center}
$x =$
\quad
\begin{tikzpicture}[x=1.5cm,y=-.5cm,baseline=-1.05cm]

\def\wid{2}

\node[v] (a1) at (0,0) {};
\node[v] (a2) at (0,1) {};
\node[v] (a3) at (0,2) {};
\node[v] (a4) at (0,3) {};
\node[v] (a5) at (0,4) {};
\node[v] (a6) at (0,5) {};
\node[v] (a7) at (0,6) {};

\node[v] (b1) at (1* \wid,0) {};
\node[v] (b2) at (1* \wid,1) {};
\node[v] (b3) at (1* \wid,2) {};
\node[v] (b4) at (1* \wid,3) {};
\node[v] (b5) at (1* \wid,4) {};
\node[v] (b6) at (1* \wid,5) {};
\node[v] (b7) at (1* \wid,6) {};

\draw[e] (a4) to[out=0, in=180] (b6);
\draw[e] (a1) to[out=0, in=180] (b5);
\draw[e] (a2) to[out=0, in=180] (b1);

\draw[e] (a3) to[out=0, in=0] (a7);
\draw[e] (a5) to[out=0, in=0] (a6);

\draw[e] (b2) to[out=180, in=180] (b3);
\draw[e] (b2) to[out=180, in=180] (b3);
\draw[e] (b4) to[out=180, in=180] (b7);

\end{tikzpicture}
\quad
and $y=$
\quad
\begin{tikzpicture}[x=1.5cm,y=-.5cm,baseline=-1.05cm]

\def\wid{2}

\node[v] (b1) at (1* \wid,0) {};
\node[v] (b2) at (1* \wid,1) {};
\node[v] (b3) at (1* \wid,2) {};
\node[v] (b4) at (1* \wid,3) {};
\node[v] (b5) at (1* \wid,4) {};
\node[v] (b6) at (1* \wid,5) {};
\node[v] (b7) at (1* \wid,6) {};

\node[v] (c1) at (2* \wid,0) {};
\node[v] (c2) at (2* \wid,1) {};
\node[v] (c3) at (2* \wid,2) {};
\node[v] (c4) at (2* \wid,3) {};
\node[v] (c5) at (2* \wid,4) {};
\node[v] (c6) at (2* \wid,5) {};
\node[v] (c7) at (2* \wid,6) {};

\draw[e] (c1) to[out=180, in=180] (c5);
\draw[e] (c4) to[out=180, in=180] (c7);

\draw[e] (b1) to[out=0, in=0] (b4);
\draw[e] (b3) to[out=0, in=0] (b2);

\draw[e] (b5) to[out=0, in=180] (c3);
\draw[e] (b7) to[out=0, in=180] (c6);
\draw[e] (b6) to[out=0, in=180] (c2);

\end{tikzpicture}
\end{center}

then the double diagram formed from $x$ and $y$ is

\begin{center}
\quad
\begin{tikzpicture}[x=1.5cm,y=-.5cm,baseline=-1.05cm]

\def\wid{2}

\fill (0,0) circle (.75mm) node[left=2pt](a) {$l_1$};
\fill (0,1) circle (.75mm) node[left=2pt](a) {$l_2$};
\fill (0,2) circle (.75mm) node[left=2pt](a) {$l_3$};
\fill (0,3) circle (.75mm) node[left=2pt](a) {$l_4$};
\fill (0,4) circle (.75mm) node[left=2pt](a) {$l_5$};
\fill (0,5) circle (.75mm) node[left=2pt](a) {$l_6$};
\fill (0,6) circle (.75mm) node[left=2pt](a) {$l_7$};

\fill (2*\wid,0) circle (.75mm) node[right=2pt](a) {$r_1$};
\fill (2*\wid,1) circle (.75mm) node[right=2pt](a) {$r_2$};
\fill (2*\wid,2) circle (.75mm) node[right=2pt](a) {$r_3$};
\fill (2*\wid,3) circle (.75mm) node[right=2pt](a) {$r_4$};
\fill (2*\wid,4) circle (.75mm) node[right=2pt](a) {$r_5$};
\fill (2*\wid,5) circle (.75mm) node[right=2pt](a) {$r_6$};
\fill (2*\wid,6) circle (.75mm) node[right=2pt](a) {$r_7$};

\node[v] (a1) at (0,0) {};
\node[v] (a2) at (0,1) {};
\node[v] (a3) at (0,2) {};
\node[v] (a4) at (0,3) {};
\node[v] (a5) at (0,4) {};
\node[v] (a6) at (0,5) {};
\node[v] (a7) at (0,6) {};

\node[v] (b1) at (1* \wid,0) {};
\node[v] (b2) at (1* \wid,1) {};
\node[v] (b3) at (1* \wid,2) {};
\node[v] (b4) at (1* \wid,3) {};
\node[v] (b5) at (1* \wid,4) {};
\node[v] (b6) at (1* \wid,5) {};
\node[v] (b7) at (1* \wid,6) {};

\node[v] (c1) at (2* \wid,0) {};
\node[v] (c2) at (2* \wid,1) {};
\node[v] (c3) at (2* \wid,2) {};
\node[v] (c4) at (2* \wid,3) {};
\node[v] (c5) at (2* \wid,4) {};
\node[v] (c6) at (2* \wid,5) {};
\node[v] (c7) at (2* \wid,6) {};

\draw (1.3* \wid,0) node{$m_1$};
\draw (0.75* \wid,1) node{$m_2$};
\draw (0.75* \wid,2) node{$m_3$};
\draw (1.27* \wid,3) node{$m_4$};
\draw (1.55* \wid,4) node{$m_5$};
\draw (1.4* \wid,5) node{$m_6$};
\draw (0.75* \wid,6) node{$m_7$};

\draw[e, dotted] (1.17* \wid,0) to (b1);
\draw[e, dotted] (0.88* \wid,1) to (b2);
\draw[e, dotted] (0.88* \wid,2) to (b3);
\draw[e, dotted] (1.14* \wid,3) to (b4);
\draw[e, dotted] (1.42* \wid,4) to (b5);
\draw[e, dotted] (1.27* \wid,5) to (b6);
\draw[e, dotted] (0.88* \wid,6) to (b7);

\draw[e] (a4) to[out=0, in=180] (b6);
\draw[e] (a1) to[out=0, in=180] (b5);
\draw[e] (a2) to[out=0, in=180] (b1);

\draw[e] (a3) to[out=0, in=0] (a7);
\draw[e] (a5) to[out=0, in=0] (a6);

\draw[e] (b2) to[out=180, in=180] (b3);
\draw[e] (b2) to[out=180, in=180] (b3);
\draw[e] (b4) to[out=180, in=180] (b7);

\draw[e] (c1) to[out=180, in=180] (c5);
\draw[e] (c4) to[out=180, in=180] (c7);

\draw[e] (b1) to[out=0, in=0] (b4);
\draw[e] (b3) to[out=0, in=0] (b2);

\draw[e] (b5) to[out=0, in=180] (c3);
\draw[e] (b7) to[out=0, in=180] (c6);
\draw[e] (b6) to[out=0, in=180] (c2);

\end{tikzpicture}
\quad
\end{center}

The equivalence classes under the relation $\underset{x,y}{\sim}$ are
$$l_1 \sim m_5 \sim r_3, \ l_2 \sim m_1 \sim m_4 \sim m_7 \sim r_6,$$
$$l_3 \sim l_7, \ l_5 \sim l_6, \ r_1 \sim r_5, \ r_4 \sim r_7,\textrm{ and } m_2 \sim m_3.$$ \end{example}

Ridout and Saint Aubin \cite{RidoutSaintAubin} give a geometric action of Temperley-Lieb diagrams on link states, and extend this to an action of the Temperley-Lieb algebra on the formal $R$-linear combinations of link states. They call this module the \emph{link module}, and it coincides with the \emph{cell module} $W(\lambda)$ defined by Graham and Lehrer \cite{GrahamLehrer}. We will not use either machinery explicitly, but the same ideas will always be in the background - recall especially that for the Brauer algebra our definitions do not coincide with the cellular ones (Remark \ref{rmk:cellVsLS}). In particular, the definitions of those papers explain what kind of multiplication the sesqui-diagrams of the next definition are intended to describe.

\begin{definition} \label{defSesqui} \cite{RidoutSaintAubin} Let $e$ be a diagram, and let $p$ be a link state. The \emph{sesqui-diagram} $(p,e)$ has nodes $m_1, \dots, m_n,$ and $r_1, \dots, r_n$, thought of as arranged in two vertical columns, and edges as follows.

We think of $p$ as a right link state, and embed it by mapping its nodes to the $m_j$. We embed $e$ by mapping its left hand nodes to the $m_j$ and its right hand nodes to the $r_j$.

\end{definition}

\begin{example} In $\Brauer_7$, if
\begin{center}
$p =$
\quad
\begin{tikzpicture}[x=1.5cm,y=-.5cm,baseline=-1.05cm]

\def\wid{2}

\node[v] (b1) at (1* \wid,0) {};
\node[v] (b2) at (1* \wid,1) {};
\node[v] (b3) at (1* \wid,2) {};
\node[v] (b4) at (1* \wid,3) {};
\node[v] (b5) at (1* \wid,4) {};
\node[v] (b6) at (1* \wid,5) {};
\node[v] (b7) at (1* \wid,6) {};

\draw[e] (0.5*\wid, 5) to[out=0, in=180] (b6);
\draw[e] (0.5*\wid, 4) to[out=0, in=180] (b5);
\draw[e] (0.5*\wid, 0) to[out=0, in=180] (b1);

\draw[e] (b2) to[out=180, in=180] (b3);
\draw[e] (b2) to[out=180, in=180] (b3);
\draw[e] (b4) to[out=180, in=180] (b7);

\end{tikzpicture}
\quad
and $e=$
\quad
\begin{tikzpicture}[x=1.5cm,y=-.5cm,baseline=-1.05cm]

\def\wid{2}

\node[v] (b1) at (1* \wid,0) {};
\node[v] (b2) at (1* \wid,1) {};
\node[v] (b3) at (1* \wid,2) {};
\node[v] (b4) at (1* \wid,3) {};
\node[v] (b5) at (1* \wid,4) {};
\node[v] (b6) at (1* \wid,5) {};
\node[v] (b7) at (1* \wid,6) {};

\node[v] (c1) at (2* \wid,0) {};
\node[v] (c2) at (2* \wid,1) {};
\node[v] (c3) at (2* \wid,2) {};
\node[v] (c4) at (2* \wid,3) {};
\node[v] (c5) at (2* \wid,4) {};
\node[v] (c6) at (2* \wid,5) {};
\node[v] (c7) at (2* \wid,6) {};

\draw[e] (c1) to[out=180, in=180] (c5);
\draw[e] (c4) to[out=180, in=180] (c7);

\draw[e] (b1) to[out=0, in=0] (b4);
\draw[e] (b3) to[out=0, in=0] (b2);

\draw[e] (b5) to[out=0, in=180] (c3);
\draw[e] (b7) to[out=0, in=180] (c6);
\draw[e] (b6) to[out=0, in=180] (c2);

\end{tikzpicture}
\end{center}

then the sesqui-diagram formed from $x$ and $y$ is

\begin{center}
\quad
\begin{tikzpicture}[x=1.5cm,y=-.5cm,baseline=-1.05cm]

\def\wid{2}

\fill (2*\wid,0) circle (.75mm) node[right=2pt](a) {$r_1$};
\fill (2*\wid,1) circle (.75mm) node[right=2pt](a) {$r_2$};
\fill (2*\wid,2) circle (.75mm) node[right=2pt](a) {$r_3$};
\fill (2*\wid,3) circle (.75mm) node[right=2pt](a) {$r_4$};
\fill (2*\wid,4) circle (.75mm) node[right=2pt](a) {$r_5$};
\fill (2*\wid,5) circle (.75mm) node[right=2pt](a) {$r_6$};
\fill (2*\wid,6) circle (.75mm) node[right=2pt](a) {$r_7$};

\node[v] (b1) at (1* \wid,0) {};
\node[v] (b2) at (1* \wid,1) {};
\node[v] (b3) at (1* \wid,2) {};
\node[v] (b4) at (1* \wid,3) {};
\node[v] (b5) at (1* \wid,4) {};
\node[v] (b6) at (1* \wid,5) {};
\node[v] (b7) at (1* \wid,6) {};

\node[v] (c1) at (2* \wid,0) {};
\node[v] (c2) at (2* \wid,1) {};
\node[v] (c3) at (2* \wid,2) {};
\node[v] (c4) at (2* \wid,3) {};
\node[v] (c5) at (2* \wid,4) {};
\node[v] (c6) at (2* \wid,5) {};
\node[v] (c7) at (2* \wid,6) {};

\draw (1.3* \wid,0) node{$m_1$};
\draw (0.75* \wid,1) node{$m_2$};
\draw (0.75* \wid,2) node{$m_3$};
\draw (1.27* \wid,3) node{$m_4$};
\draw (1.55* \wid,4) node{$m_5$};
\draw (1.4* \wid,5) node{$m_6$};
\draw (0.75* \wid,6) node{$m_7$};

\draw[e, dotted] (1.17* \wid,0) to (b1);
\draw[e, dotted] (0.88* \wid,1) to (b2);
\draw[e, dotted] (0.88* \wid,2) to (b3);
\draw[e, dotted] (1.14* \wid,3) to (b4);
\draw[e, dotted] (1.42* \wid,4) to (b5);
\draw[e, dotted] (1.27* \wid,5) to (b6);
\draw[e, dotted] (0.88* \wid,6) to (b7);

\draw[e] (0.5*\wid, 5) to[out=0, in=180] (b6);
\draw[e] (0.5*\wid, 4) to[out=0, in=180] (b5);
\draw[e] (0.5*\wid, 0) to[out=0, in=180] (b1);

\draw[e] (b2) to[out=180, in=180] (b3);
\draw[e] (b2) to[out=180, in=180] (b3);
\draw[e] (b4) to[out=180, in=180] (b7);

\draw[e] (c1) to[out=180, in=180] (c5);
\draw[e] (c4) to[out=180, in=180] (c7);

\draw[e] (b1) to[out=0, in=0] (b4);
\draw[e] (b3) to[out=0, in=0] (b2);

\draw[e] (b5) to[out=0, in=180] (c3);
\draw[e] (b7) to[out=0, in=180] (c6);
\draw[e] (b6) to[out=0, in=180] (c2);

\end{tikzpicture}
\quad
\end{center}
\end{example}

Unsurprisingly, we think of sesqui-diagrams as being obtained from double diagrams by restricting the left half to its right link state. Precisely, if $x$ and $y$ are diagrams, and $p$ is the right link state of $x$, then the double diagram $(x,y)$ extends the defects from the sesqui-diagram $(p,y)$ to edges, which terminate on the left hand side of the embedded copy of $x$. Relative to $\underset{p,y}{\sim}$, the equivalence relation $\underset{x,y}{\sim}$ has extra equivalence classes (each of cardinality 2, coming from left-to-left connections in $x$) and each left-to-right connection in $x$ enlarges one of the equivalence classes of $\underset{p,y}{\sim}$ by appending a single $l_j$. In particular, we have the following proposition.

\begin{proposition} \label{doubleToSesqui} Let $x$ and $y$ be diagrams, and let $p$ be the right link state of $x$. The restriction of the equivalence relation $\underset{x,y}{\sim}$ to the nodes $\{m_1, \dots, m_n, r_1, \dots, r_n\}$ is precisely $\underset{p,y}{\sim}$. \qed \end{proposition}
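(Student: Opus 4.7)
The plan is to show that both equivalence relations are generated by the same set of edges, so that they agree on $\{m_1, \dots, m_n, r_1, \dots, r_n\}$. First I would compare the edges of the double diagram $(x, y)$ with those of the sesqui-diagram $(p, y)$. The right halves are identical: in both cases we have the edges of $y$, embedded with left column on the $m_j$ and right column on the $r_k$. The left halves differ only in that $(x, y)$ contains all of $x$ (left-to-left connections among the $l_k$, left-to-right connections from some $l_k$ to some $m_j$, and right-to-right connections among the $m_j$), whereas $(p, y)$ contains only the right link state $p$ of $x$, which records the right-to-right connections of $x$ (as edges among the $m_j$) and, for each left-to-right connection $l_k - m_j$ of $x$, a hanging half-edge (defect) at $m_j$.

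The main observation is that every $l$-node has valence at most $1$ in the double diagram, since the only edges incident to $l_k$ come from $x$ and by definition each node in a rook-Brauer diagram is in at most one edge. Hence any path in $(x, y)$ that leaves $\{m_1, \dots, m_n, r_1, \dots, r_n\}$ by entering the $l$-column cannot return: it either dead-ends immediately at an $l$-node of valence $1$, or traverses a single left-to-left edge $l_k - l_{k'}$ and then dead-ends at $l_{k'}$. It follows that two nodes of $\{m, r\}$ are $\underset{x,y}{\sim}$-equivalent if and only if they are connected by a path using only edges whose both endpoints lie in $\{m, r\}$, namely the right-to-right edges of $x$ together with all edges of $y$.

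Finally, these edges are exactly the non-hanging edges of the sesqui-diagram $(p, y)$: by construction $p$ has the same right-to-right edges as $x$, and the edges of $y$ are embedded identically in both pictures. The remaining data in $(p, y)$, namely the hanging half-edges at defects, contribute no equivalences among distinct nodes of $\{m, r\}$, since such a half-edge has no second endpoint with which to identify $m_j$. Thus $\underset{p,y}{\sim}$ is generated by the same set of edges as the restriction of $\underset{x,y}{\sim}$, and the two relations coincide. There is no substantial obstacle; the only subtlety is the correct bookkeeping for defects, which is handled by the observation that a hanging half-edge cannot merge equivalence classes.
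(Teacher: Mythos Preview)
Your proposal is correct and matches the paper's approach. The paper does not give a formal proof (the proposition carries an immediate \qed), relying instead on the paragraph preceding it, which observes that passing from the sesqui-diagram $(p,y)$ to the double diagram $(x,y)$ only adds left-to-left edges (creating new size-two classes entirely among the $l_j$) and left-to-right edges (each appending a single $l_j$ to an existing class); your valence-$1$ argument for the $l$-nodes is exactly a detailed version of this observation. One minor remark: your second alternative, ``traverses a single left-to-left edge $l_k$--$l_{k'}$'', cannot actually occur for a path that has just arrived at $l_k$ from some $m_j$, since that arrival edge is already the unique edge at $l_k$; this case is harmless but superfluous.
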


The next lemma will be used to prove both of the results of this section, Theorems \ref{generalisedSroka} and \ref{generalisedBrauerSroka}. It give  diagrammatic conditions under which a property stronger than the input to Theorem \ref{subalgebraOfBrauer} holds.

\begin{lemma} \label{LSControl} Let $p \in P_i$ be a link state. Suppose that $e \in \Brauer_n(\delta)$ satisfies the following properties:
\begin{enumerate}
    \item $e$ has right link state $p$, \label{eT1}
    \item $m_j \underset{p,e}{\sim} r_j$ whenever $p$ has a defect at node $j$, and \label{eT2}
    \item for each $j$, there exists some $k$ with $m_j \underset{p,e}{\sim} r_k$. \label{eT3}
\end{enumerate}
Then $y e = y$ for all $y \in J_p$. \end{lemma}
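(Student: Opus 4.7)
The plan is to analyze the connected components of the double diagram $(y,e)$ directly, and argue that each one contributes exactly the corresponding feature of $y$, so that $ye = y$ with no extraneous factors of $\delta$ or $\varepsilon$. By $R$-bilinearity, we reduce to the case that $y$ is a single diagram; let $q$ denote its right link state, so that $q$ is obtained from $p$ by a (possibly empty) sequence of {\mOne}s and {\mTwo}s.

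First, we pin down the structure of the sesqui-diagram $(p,e)$. Since $e \in \Brauer_n(\delta)$ has no missing edges, each $r_k$ has degree exactly $1$ in $(p,e)$, while each $m_j$ has degree $1$ when $j$ is a defect of $p$ and degree $2$ when $j$ is an arch endpoint of $p$. Thus $(p,e)$ has maximum degree $2$, so every connected component is a path or a cycle. Condition (3) forbids cycles and also forbids paths whose endpoints are both in the middle, so every component is a path with at least one $r$-endpoint. Condition (2) then forces any path with an $m$-defect endpoint $m_j$ to have its other endpoint at $r_j$ (the ``Type A'' paths). A short argument using (1), that $e$ has right link state $p$, then shows the remaining components are the single edges $r_k-r_{k'}$ arising from the right-to-right arches of $e$, one for each arch of $p$.

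Next, we transfer this structure to $(q,e)$ and then to $(y,e)$. Each {\mOne} adds an edge $m_{j_1}-m_{j_2}$ merging two Type A paths into a single longer path with two $r$-endpoints, whereas each {\mTwo} leaves the underlying graph unchanged. Adding the left side of $y$ then contributes separate $l-l$ edges and singletons encoding the left-side structure of $y$, plus an edge $l_i-m_j$ at each surviving $q$-defect, which extends the corresponding Type A path to an $l-r$ path. By Proposition \ref{doubleToSesqui} this describes the equivalence relation on $(y,e)$. The upshot is that every connected component of $(y,e)$ contains at least one $l$- or $r$-node, so no loops and no contractible middle components can arise, and in particular no factors of $\delta$ or $\varepsilon$ appear.

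It remains to match the resulting connections of $ye$ with those of $y$, which we do by a case check: left-to-left connections of $y$ are preserved verbatim; each left-to-right $l_i-m_j$ in $y$ (with $j$ a $q$-defect, hence also a $p$-defect) becomes $l_i-r_j$ in $ye$ via the extended Type A path; each right-to-right $m_k-m_{k'}$ in $y$ becomes $r_k-r_{k'}$ in $ye$, either via the direct $r_k-r_{k'}$ edge in $(p,e)$ (when the arch was already present in $p$) or via the merged path (when the arch arose from a {\mOne} of two $p$-defects); and a missing edge at $m_k$ in $y$ (corresponding to a deleted $p$-defect) yields an unextended Type A path with a dangling middle endpoint, which resolves to a missing edge at $r_k$ in $ye$. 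The main obstacle is the two-source analysis for right-to-right arches in $y$, together with keeping the degree-$\leq 2$ path analysis tight enough to rule out hypothetical two-$r$-endpoint paths of length greater than one.
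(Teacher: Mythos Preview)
Your proof is correct and follows essentially the same route as the paper: reduce to a single diagram $y$, analyze the double diagram $(y,e)$ via the sesqui-diagram $(p,e)$, and use conditions (1)--(3) to match connections and rule out closed middle components. The organizational difference is that the paper argues node-by-node on each $r_j$ (splitting on whether $j$ is a defect of $p$ or not), whereas you first give a global classification of the connected components of $(p,e)$ (Type~A paths $m_j\text{--}r_j$ for defects, and single $r_k\text{--}r_{k'}$ edges for arches) and then track how these evolve under splices, deletions, and the addition of the left half of $y$. Your component classification is tight and correct: the key point---that an $r$-endpoint not on a Type~A path must be joined by an $r$--$r$ edge of $e$, forcing length~$1$---is exactly the ``short argument using (1)'' you allude to, since a left-to-right edge at $r_k$ in $e$ would make $k$ a defect of $p$ and hence place $r_k$ on a Type~A path by~(2).

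One small point in your favour: you explicitly treat the case where $y$ has a missing edge on the right arising from a deletion, showing that the unextended Type~A path resolves to a missing edge at $r_k$ in $ye$ (and, since the component still contains $r_k$, no factor of $\varepsilon$ arises). The paper's proof tacitly assumes the right link state of $y$ is obtained from $p$ by splices only, which is all that is needed in the Brauer and Temperley--Lieb applications, but your treatment matches the lemma as stated for general $y\in J_p$.
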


\begin{proof} We must show that the underlying diagram of $y e$ is just $y$, and that the multiplication produces no factors of $\delta$. We will show the equivalent statement that the double diagram $(y,e)$ has all of the connections of $y$, and no loops.

The right link state of $y$ is obtained from $p$ via a sequence of {\mOne}s. The equivalence relation on the set of nodes $\{m_1, \dots, m_n, r_1, \dots, r_n\}$ obtained by restricting $\underset{y,e}{\sim}$ is therefore stronger than $\underset{p,e}{\sim}$ by Proposition \ref{doubleToSesqui}. In other words, nodes connected in the sesqui-diagram $(p,e)$ are connected in the double diagram $(y,e)$. We will use this fact frequently and without comment in the rest of the proof.

Consider some node $r_j$ ($1 \leq j \leq n$) on the right of the double diagram $(y,e)$. We will argue that $r_j$ must be attached to the same place in $(y,e)$ as it is in $y$.

\begin{itemize}
    \item If $j$ is connected to some node $k \neq j$ via a right-to-right connection in $y$, then we must show that $r_j \underset{y,e}{\sim} r_k$. There are two cases to consider.
    \begin{itemize}
        \item If $p$ does not have a defect at height $j$, then $j$ is already connected to $k$ in $e$. By Property (\ref{eT1}), $e$ has right link state $p$, so $r_j \underset{p,e}{\sim} r_k$, so $r_j \underset{y,e}{\sim} r_k$, as required.
        \item If $p$ has a defect at height $j$, then by assumption, $r_j \underset{p,e}{\sim} m_j$, and $j$ is one of the `extra' right-to-right connections that occurs in $y$ but not in $p$. In particular, the node $k$ on the right of $y$ to which $j$ is connected must also be a defect in $p$, so $m_k \underset{p,e}{\sim} r_k$. Thus, $r_j \underset{y,e}{\sim} m_j \underset{y,e}{\sim} m_k \underset{y,e}{\sim} r_k$, as required.
    \end{itemize}
    \item If $j$ is connected to some node $k$ via a left-to-right connection in $y$ (necessarily with $k$ on the left), then we must show that $l_k \underset{y,e}{\sim} r_j$. Now, $j$ is a defect in the right link state of $y$, so since the right link state of $y$ is obtained from $p$ by a sequence of {\mOne}s, $j$ must also be a defect in $p$. By Property (\ref{eT2}), we have $m_j \underset{p,e}{\sim} r_j$, so $m_j \underset{y,e}{\sim} r_j$, so $l_k \underset{y,e}{\sim} m_j \underset{y,e}{\sim} r_j$, where the penultimate equivalence uses that $k$ is connected to $j$ via a left-to-right connection in $y$, as required.
\end{itemize}

Thus, nodes on the right are attached to the same place in $y$ as they are in $(y,e)$. This means that $y$ and $(y,e)$ have the same left-to-right and right-to-right connections. Since $(y,e)$ automatically retains all left-to-left connections from $y$, the two have the same connections, and it remains only to check that there are no loops.

A loop in $(y,e)$ must only pass through nodes from among the middle nodes $m_j$, since these are the only nodes that can have valence greater than one. It therefore suffices to establish that each middle node in $(y,e)$ is connected to some left or right node, and Property (\ref{eT3}) says that in fact each middle node is connected to a right node, as required.

This completes the proof. \end{proof}

\subsection{Proof of Theorem \ref{generalisedBrauerSroka}} \label{subsectionSrokaBrauer}

This subsection is devoted to the proof of Theorem \ref{generalisedBrauerSroka}. The plan is to prove Lemma \ref{easyTrundle}, which will give us the input to Lemma \ref{LSControl}, which in turn feeds Theorem \ref{subalgebraOfBrauer}.

\begin{lemma} \label{easyTrundle} Let $p \in P_i$ be a link state with no missing edges, and at least one defect ($i \geq 1$). There exists a diagram $e = e_p \in \Brauer_n(\delta)$ satisfying Properties (\ref{eT1})-(\ref{eT3}) of Lemma \ref{LSControl}. \end{lemma}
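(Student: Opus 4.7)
The strategy is to construct $e = e_p$ explicitly as a Brauer diagram and then read off Properties (1)--(3) from the sesqui-diagram $(p,e)$. Since $p$ has no missing edges, its nodes partition into $i$ defects $D = \{d_1, \ldots, d_i\}$ and $s = (n-i)/2$ right-to-right pairs $\{a_1, b_1\}, \ldots, \{a_s, b_s\}$. The hypothesis $i \geq 1$ lets me fix a distinguished defect $j^* := d_1$; the degenerate case $s = 0$ (so $i = n$) is handled by letting $e$ be the identity diagram, in which case the sesqui-diagram consists of $n$ isolated edges $m_j - r_j$ and all three properties are immediate.

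For $s \geq 1$, I would build $e$ as follows: take its right-to-right edges to be those of $p$ (giving Property (1) at once), put a horizontal left-to-right edge at each defect $d_k$ with $k \geq 2$ (so that the edge $m_{d_k} - r_{d_k}$ in $(p,e)$ handles Property (2) at $d_k$), and take the last left-to-right edge of $e$ to join the left node $a_1$ to the right node $j^*$. This leaves $n-i$ left nodes of $e$ to be paired up by a left-to-left matching $M_e$, namely $j^*$ together with all non-defect positions other than $a_1$. The key combinatorial point is that, for any such $M_e$, the union $M_p \cup M_e$ (where $M_p$ is the right-to-right matching of $p$) is a graph in which $j^*$ and $a_1$ have degree $1$ while every other non-defect position has degree $2$; so it is automatically a disjoint union of a single path from $j^*$ to $a_1$ together with some even cycles, and the task is to pick $M_e$ so that no cycles appear. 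The simplest solution is the ``serpentine'' matching
\[
M_e \;=\; \bigl\{\{j^*, a_2\}, \{b_2, a_3\}, \{b_3, a_4\}, \ldots, \{b_{s-1}, a_s\}, \{b_s, b_1\}\bigr\},
\]
interpreted as $\{\{j^*, b_1\}\}$ when $s=1$, for which a direct check shows that $M_p \cup M_e$ is the single alternating path $j^* - a_2 - b_2 - a_3 - \cdots - b_s - b_1 - a_1$.

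Adjoining the left-to-right edge $m_{a_1} - r_{j^*}$ extends this into a single path in $(p,e)$ from $m_{j^*}$ all the way to $r_{j^*}$, giving Property (2) at $j^*$. Since each $m$-node is either on this long path (and so reaches $r_{j^*}$) or is some $m_{d_k}$ with $k \geq 2$ (and so reaches $r_{d_k}$ directly), Property (3) follows. The main obstacle is the combinatorial step of avoiding parallel edges and longer alternating cycles in $M_p \cup M_e$; the serpentine above clears both at once by threading $M_p$ and $M_e$ together along a single walk through the right-to-right pairs of $p$. Once the combinatorics are in place, the remaining verifications are short componentwise checks in the sesqui-diagram.
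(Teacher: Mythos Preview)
Your proof is correct and follows essentially the same strategy as the paper's: fix one distinguished defect $j^*$, make all other defects horizontal, and then thread the remaining left link state of $e$ through the arcs of $p$ so that the middle nodes form a single path from $m_{j^*}$ to $r_{j^*}$. The paper phrases the threading step as ``choose an order and orientations on the connections of $p$ and chain them together, then attach the two loose ends to $m_{j_0}$ and $r_{j_0}$'', whereas you write down one explicit serpentine matching and handle the $s=0$ case separately; these are minor presentational differences only.
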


\begin{remark} In the language of \cite{RidoutSaintAubin}, the proof will also show that in the link module we have $p e_p = p$. In interpreting this statement, beware that our generalisation of the link module to the Brauer algebra is not the cellular one. \end{remark}

\begin{proof} We will build the desired diagram $p$ `in situ'. Form a `partial sesqui-diagram', with $p$ as the link state part, and $p$ as the right link state of the diagram part, which is otherwise left blank. Below we illustrate an example in $\Brauer_7$, which we will carry along through the proof:

\begin{center}
\quad
\begin{tikzpicture}[x=1.5cm,y=-.5cm,baseline=-1.05cm]

\def\wid{2}

\node[v] (b1) at (1* \wid,0) {};
\node[v] (b2) at (1* \wid,1) {};
\node[v] (b3) at (1* \wid,2) {};
\node[v] (b4) at (1* \wid,3) {};
\node[v] (b5) at (1* \wid,4) {};
\node[v] (b6) at (1* \wid,5) {};
\node[v] (b7) at (1* \wid,6) {};

\node[v] (c1) at (2* \wid,0) {};
\node[v] (c2) at (2* \wid,1) {};
\node[v] (c3) at (2* \wid,2) {};
\node[v] (c4) at (2* \wid,3) {};
\node[v] (c5) at (2* \wid,4) {};
\node[v] (c6) at (2* \wid,5) {};
\node[v] (c7) at (2* \wid,6) {};

\draw[e] (0.5*\wid,5) to[out=0, in=180] (b6);
\draw[e] (0.5*\wid,1) to[out=0, in=180] (b2);
\draw[e] (0.5*\wid,2) to[out=0, in=180] (b3);

\draw[e] (b1) to[out=180, in=180] (b5);
\draw[e] (b4) to[out=180, in=180] (b7);

\draw[e] (c1) to[out=180, in=180] (c5);
\draw[e] (c4) to[out=180, in=180] (c7);
\draw[e] (1.5*\wid,5) to[out=0, in=180] (c6);
\draw[e] (1.5*\wid,1) to[out=0, in=180] (c2);
\draw[e] (1.5*\wid,2) to[out=0, in=180] (c3);

\end{tikzpicture}
\quad
\end{center}

We now complete the right half of the sesqui-diagram to an diagram $e_p$ which will have the desired properties. Note that Property (\ref{eT1}) is already satisfied, because we have fixed the right link state of $e_p$.

First, extend all but one of the defects of $p$ to horizontal edges (this is a laziness that will not be possible in the Temperley-Lieb case). This almost ensures Property (\ref{eT2}): namely, we see that $r_j \underset{p,e}{\sim} m_j$ in the sesqui-diagram at all defects but one, which we call $j_0$ (the existence of this defect is where we use the assumption $i \geq 1$). For our example, choosing the top and bottom defects, we get:

\begin{center}
\quad
\begin{tikzpicture}[x=1.5cm,y=-.5cm,baseline=-1.05cm]

\def\wid{2}

\node[v] (b1) at (1* \wid,0) {};
\node[v] (b2) at (1* \wid,1) {};
\node[v] (b3) at (1* \wid,2) {};
\node[v] (b4) at (1* \wid,3) {};
\node[v] (b5) at (1* \wid,4) {};
\node[v] (b6) at (1* \wid,5) {};
\node[v] (b7) at (1* \wid,6) {};

\node[v] (c1) at (2* \wid,0) {};
\node[v] (c2) at (2* \wid,1) {};
\node[v] (c3) at (2* \wid,2) {};
\node[v] (c4) at (2* \wid,3) {};
\node[v] (c5) at (2* \wid,4) {};
\node[v] (c6) at (2* \wid,5) {};
\node[v] (c7) at (2* \wid,6) {};

\draw[e] (0.5*\wid,5) to[out=0, in=180] (b6);
\draw[e] (0.5*\wid,1) to[out=0, in=180] (b2);
\draw[e] (0.5*\wid,2) to[out=0, in=180] (b3);

\draw[e] (b1) to[out=180, in=180] (b5);
\draw[e] (b4) to[out=180, in=180] (b7);

\draw[e] (c1) to[out=180, in=180] (c5);
\draw[e] (c4) to[out=180, in=180] (c7);
\draw[e] (1.5*\wid,5) to[out=0, in=180] (c6);
\draw[e] (1.5*\wid,1) to[out=0, in=180] (c2);
\draw[e] (1.5*\wid,2) to[out=0, in=180] (c3);

\draw[e, red] (1.5*\wid,5) to[out=180, in=0] (b6);
\draw[e, red] (1.5*\wid,1) to[out=180, in=0] (b2);

\end{tikzpicture}
\quad
\end{center}

Choose a total order on those vertices in the right half of the sesqui-diagram which are part of right-to-right connections in $p$, such that connected vertices are consecutive. Add edges to $e_p$ connecting consecutive vertices which are not already connected in $p$ (`filling in the gaps'). For our example, there is only a single connection to choose:

\begin{center}
\quad
\begin{tikzpicture}[x=1.5cm,y=-.5cm,baseline=-1.05cm]

\def\wid{2}

\node[v] (b1) at (1* \wid,0) {};
\node[v] (b2) at (1* \wid,1) {};
\node[v] (b3) at (1* \wid,2) {};
\node[v] (b4) at (1* \wid,3) {};
\node[v] (b5) at (1* \wid,4) {};
\node[v] (b6) at (1* \wid,5) {};
\node[v] (b7) at (1* \wid,6) {};

\node[v] (c1) at (2* \wid,0) {};
\node[v] (c2) at (2* \wid,1) {};
\node[v] (c3) at (2* \wid,2) {};
\node[v] (c4) at (2* \wid,3) {};
\node[v] (c5) at (2* \wid,4) {};
\node[v] (c6) at (2* \wid,5) {};
\node[v] (c7) at (2* \wid,6) {};

\draw[e] (0.5*\wid,5) to[out=0, in=180] (b6);
\draw[e] (0.5*\wid,1) to[out=0, in=180] (b2);
\draw[e] (0.5*\wid,2) to[out=0, in=180] (b3);

\draw[e] (b1) to[out=180, in=180] (b5);
\draw[e] (b4) to[out=180, in=180] (b7);

\draw[e] (c1) to[out=180, in=180] (c5);
\draw[e] (c4) to[out=180, in=180] (c7);
\draw[e] (1.5*\wid,5) to[out=0, in=180] (c6);
\draw[e] (1.5*\wid,1) to[out=0, in=180] (c2);
\draw[e] (1.5*\wid,2) to[out=0, in=180] (c3);

\draw[e, red] (1.5*\wid,5) to[out=180, in=0] (b6);
\draw[e, red] (1.5*\wid,1) to[out=180, in=0] (b2);

\draw[e, red] (b1) to[out=0, in=0] (b4);

\end{tikzpicture}
\quad
\end{center}

This joins the edges in the middle of the diagram into a single connected component (making a Hamiltonian path between all vertices present in right-to-right connections in the left half of the sesqui-diagram), and leaves the two ends of the sequence as-yet unconnected in the right half of the double diagram. In fact, the only nodes in the right half of the double diagram that remain unconnected are these two ends, together with $r_{j_0}$, and $m_{j_0}$.

To complete, connect one end of this sequence to $r_{j_0}$ and connect the other to $m_{j_0}$ (it does not matter which way round):

\begin{center}
\quad
\begin{tikzpicture}[x=1.5cm,y=-.5cm,baseline=-1.05cm]

\def\wid{2}

\node[v] (b1) at (1* \wid,0) {};
\node[v] (b2) at (1* \wid,1) {};
\node[v] (b3) at (1* \wid,2) {};
\node[v] (b4) at (1* \wid,3) {};
\node[v] (b5) at (1* \wid,4) {};
\node[v] (b6) at (1* \wid,5) {};
\node[v] (b7) at (1* \wid,6) {};

\node[v] (c1) at (2* \wid,0) {};
\node[v] (c2) at (2* \wid,1) {};
\node[v] (c3) at (2* \wid,2) {};
\node[v] (c4) at (2* \wid,3) {};
\node[v] (c5) at (2* \wid,4) {};
\node[v] (c6) at (2* \wid,5) {};
\node[v] (c7) at (2* \wid,6) {};

\draw[e] (0.5*\wid,5) to[out=0, in=180] (b6);
\draw[e] (0.5*\wid,1) to[out=0, in=180] (b2);
\draw[e] (0.5*\wid,2) to[out=0, in=180] (b3);

\draw[e] (b1) to[out=180, in=180] (b5);
\draw[e] (b4) to[out=180, in=180] (b7);

\draw[e] (c1) to[out=180, in=180] (c5);
\draw[e] (c4) to[out=180, in=180] (c7);
\draw[e] (1.5*\wid,5) to[out=0, in=180] (c6);
\draw[e] (1.5*\wid,1) to[out=0, in=180] (c2);
\draw[e] (1.5*\wid,2) to[out=0, in=180] (c3);

\draw[e, red] (1.5*\wid,5) to[out=180, in=0] (b6);
\draw[e, red] (1.5*\wid,1) to[out=180, in=0] (b2);

\draw[e, red] (b1) to[out=0, in=0] (b4);

\draw[e, red] (b3) to[out=0, in=0] (b7);
\draw[e, red] (b5) to[out=0, in=180] (1.5* \wid, 2);

\end{tikzpicture}
\quad
\end{center}

The result is that in the double diagram, $r_{j_0}$ is connected to $m_{j_0}$ (establishing Property (\ref{eT2}) in the only remaining case, $j=j_0$) \emph{via} the sequence of edges in the middle. To see that each node $m_j$ in the middle is connected to one on the right and establish Property (\ref{eT3}), note that such a node is either at a height $j \neq j_0$ where $p$ has a defect, hence is connected directly to $r_j$, or occurs as part of the `Eulerian cycle' of edges, hence is connected to $r_{j_0}$. This completes the proof. \end{proof}

With Lemma \ref{easyTrundle} in hand, the proof of Theorem \ref{generalisedBrauerSroka} proceeds just like that of Theorem \ref{BrauerRecovery}, except that we must take $\ell = 1$ in Theorem \ref{subalgebraOfBrauer}.

\begin{proof}[Proof of Theorem \ref{generalisedBrauerSroka}] Since $\Brauer_n(\delta)$ is free as an $R$-module on a subset of the rook-Brauer diagrams, we may attempt to apply Theorem \ref{subalgebraOfBrauer} with $\ell = 1$ and $m = n-1$. This will give the correct conclusion, since $I_{\ell-1}=I_0$, and $\faktor{\Brauer_n(\delta)}{\Brauer_n(\delta) \cap I_{n-1}} \cong R \Sigma_n$.

To verify the hypothesis of that theorem, take $i$ in the range $1 \leq i \leq n-1$. Each link state with no missing edges occurs as the right link state of some Brauer diagram, so we must take $p \in P_i$ with no missing edges, and find an idempotent $e_p$ generating $\Brauer_n(\delta) \cap J_p$ as a left ideal.

Let $e_p$ be the diagram given by Lemma \ref{easyTrundle}. By Lemma \ref{LSControl} with $y=e_p$, the diagram $e_p$ is idempotent. Now, the left ideal generated by $e_p$ is in particular closed under $R$-linear combinations, and $\Brauer_n(\delta) \cap J_p$ is free on a basis of diagrams, so it suffices to show that for each diagram $y$ in $\Brauer_n(\delta) \cap J_p$, there exists $x$ in $\Brauer_n(\delta)$ such that $x e_p = y$, but by Lemma \ref{LSControl}, taking $x=y$ satisfies this equation. This completes the proof. \end{proof}

\subsection{Proof of Theorem \ref{generalisedSroka}} \label{subsectionSrokaTL}

We will proceed essentially the same way as in Subsection \ref{subsectionSrokaBrauer}: we still use Theorem \ref{subalgebraOfBrauer}, and we still use Lemma \ref{LSControl}, but now we use Lemma \ref{hardTrundle} in place of Lemma \ref{easyTrundle}. Roughly, we do the same thing, only now we have to be substantially more careful, in order to avoid violating planarity. We will `fill in the edges' in Lemma \ref{hardTrundle} by an argument where we first solve a `local' problem (Lemma \ref{hardSingleTrundle}) and then argue that this gives a solution of the `global' problem (Lemma \ref{spheresOfInfluence}).

Many proofs of Lemma \ref{hardTrundle}, at varying levels of rigour, appear to be possible, and many of them are great fun - the reader may enjoy trying to find their own proof.

We will say that a link state is \emph{planar} if it occurs as the right link state of a Temperley-Lieb diagram, or equivalently if it has no missing edges and no defect lies inside the arc of any connection. A link state is planar if and only if it occurs as the link state of a planar diagram.

Let $p$ be a planar link state with $i \geq 1$ defects, and let $d_i$ be the $i$-th defect. Since no connection may cross a defect, setting $a_j = d_{j+1}$ for $j=1, \dots , i-1$ verifies the following proposition (which typically holds with many other choices of $a_j$).

\begin{proposition} \label{spheresOfInfluence} Let $p$ be a planar link state with $i \geq 1$ defects at nodes $d_1 < d_2 < \dots < d_i$. There exist nodes $1=a_0 < a_1 < \dots < a_i = n+1$ in the set $\{1, \dots, n+1 \}$, such that
\begin{enumerate}
    \item The $j$-th defect $d_j$ lies in the interval $[a_{j-1},a_j)$, and
    \item there are no connections between intervals $[a_{j-1},a_j)$ and $[a_{k-1},a_k)$ for $j \neq k$. \qed
\end{enumerate} \end{proposition}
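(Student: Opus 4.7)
The plan is to take the explicit choice suggested immediately before the statement, namely $a_0 = 1$, $a_i = n+1$, and $a_j = d_{j+1}$ for $j = 1, \ldots, i-1$, and verify that the two conditions hold. This gives a strictly increasing sequence $1 = a_0 < a_1 < \cdots < a_i = n+1$ of elements of $\{1,\ldots,n+1\}$ by strictness of $d_1 < d_2 < \cdots < d_i$ together with $d_i \leq n < n+1$.

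For condition (1), checking $d_j \in [a_{j-1},a_j)$ is a direct unwinding: for $j = 1$ we have $d_1 \geq 1 = a_0$ and $d_1 < d_2 = a_1$; for $2 \leq j \leq i-1$ we have $d_j = a_{j-1}$ and $d_j < d_{j+1} = a_j$; and for $j = i$ we have $d_i = a_{i-1}$ and $d_i \leq n < n+1 = a_i$. No subtleties arise.

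The main (though still short) step is condition (2). Suppose for contradiction that $p$ contains a connection between nodes $a < b$ with $a \in [a_{j-1}, a_j)$ and $b \in [a_{k-1}, a_k)$ for some $j \neq k$. Up to swapping, $j < k$, so $k \geq j+1 \geq 1$. The idea is to locate a defect strictly between $a$ and $b$, contradicting planarity of $p$. I would consider two cases: if $j \geq 1$, then $a_j = d_{j+1}$ is a defect, and $a < a_j \leq a_{k-1} \leq b$; if $j = 0$, then since $a, b$ cannot lie in a common interval we must have $k \geq 2$, and $a < a_1 \leq a_{k-1} \leq b$ with $a_1 = d_2$ a defect. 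In either case the candidate defect equals $b$ only if $b$ itself is a defect, contradicting that $b$ is the endpoint of a connection; hence the inequality is strict and the defect lies in the open arc of the connection, violating planarity.

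I do not anticipate any real obstacle: the only thing to be careful about is the boundary case $j = 0$, where one must rule out $k = 1$ (both endpoints in the same interval) to obtain a usable defect, and the distinction between $a_j \leq b$ and $a_j < b$, which is forced by $b$ being a connected endpoint rather than a defect.
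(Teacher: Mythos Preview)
Your approach is exactly the paper's: the paper simply states that setting $a_j = d_{j+1}$ for $j = 1, \ldots, i-1$ works because no connection may cross a defect, and appends a \qed. You have written out the verification in detail, which is fine.

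Two cosmetic points. First, the intervals $[a_{j-1}, a_j)$ are indexed by $j = 1, \ldots, i$, so your ``$j = 0$'' case never arises; once $j < k$ you automatically have $1 \leq j \leq i-1$, hence $a_j = d_{j+1}$ is a defect, and your first case already covers everything. Second, your treatment of condition~(1) for $j = 1$ invokes $a_1 = d_2$, which is only valid when $i \geq 2$; for $i = 1$ the single interval is $[1, n+1)$ and the check is immediate (and condition~(2) is vacuous). Neither point affects the substance of the argument.
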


In words, the intervals $[a_{j-1},a_j)$ give a partition of the set $\{1, \dots, n \}$ into $i$ sets consisting of consecutive numbers, each containing a single defect, and such that there are no connections between these sets.

Given link states $p$ and $q$, we may form the \emph{juxtaposition} of $p$ and $q$. This is a graph with $n$ nodes, arranged vertically, with the connections of $p$ on the left, and the connections of $q$ on the right. This is the graph involved in the inner product of \cite{RidoutSaintAubin} (equivalently, since we are now discussing the Temperley-Lieb algebra, in Graham and Lehrer's \cite{GrahamLehrer} inner product $\phi_\lambda$ on the cell module $W(\lambda)$), though again we will not use this technology explicitly.

\begin{example} In $\TL_7$, if

\begin{center}
$p =$
\quad
\begin{tikzpicture}[x=1.5cm,y=-.5cm,baseline=-1.05cm]

\def\wid{2}

\node[v] (b1) at (1* \wid,0) {};
\node[v] (b2) at (1* \wid,1) {};
\node[v] (b3) at (1* \wid,2) {};
\node[v] (b4) at (1* \wid,3) {};
\node[v] (b5) at (1* \wid,4) {};
\node[v] (b6) at (1* \wid,5) {};
\node[v] (b7) at (1* \wid,6) {};

\draw[e] (0.5*\wid,2) to[out=0, in=180] (b3);

\draw[e] (b1) to[out=180, in=180] (b2);
\draw[e] (b4) to[out=180, in=180] (b7);
\draw[e] (b5) to[out=180, in=180] (b6);

\end{tikzpicture}
\quad
\textrm{ and } $q=$
\quad
\begin{tikzpicture}[x=1.5cm,y=-.5cm,baseline=-1.05cm]

\def\wid{2}

\node[v] (b1) at (1* \wid,0) {};
\node[v] (b2) at (1* \wid,1) {};
\node[v] (b3) at (1* \wid,2) {};
\node[v] (b4) at (1* \wid,3) {};
\node[v] (b5) at (1* \wid,4) {};
\node[v] (b6) at (1* \wid,5) {};
\node[v] (b7) at (1* \wid,6) {};

\draw[e] (0.5*\wid,6) to[out=0, in=180] (b7);

\draw[e] (b1) to[out=180, in=180] (b4);
\draw[e] (b2) to[out=180, in=180] (b3);
\draw[e] (b5) to[out=180, in=180] (b6);

\end{tikzpicture}
\quad
\end{center}

then the juxtaposition of $p$ and $q$ is the graph
\begin{center}
\quad
\begin{tikzpicture}[x=1.5cm,y=-.5cm,baseline=-1.05cm]

\def\wid{2}

\node[v] (b1) at (1* \wid,0) {};
\node[v] (b2) at (1* \wid,1) {};
\node[v] (b3) at (1* \wid,2) {};
\node[v] (b4) at (1* \wid,3) {};
\node[v] (b5) at (1* \wid,4) {};
\node[v] (b6) at (1* \wid,5) {};
\node[v] (b7) at (1* \wid,6) {};

\draw[e] (0.5*\wid,2) to[out=0, in=180] (b3);

\draw[e] (b1) to[out=180, in=180] (b2);
\draw[e] (b4) to[out=180, in=180] (b7);
\draw[e] (b5) to[out=180, in=180] (b6);

\draw[e] (1.5*\wid,6) to[out=0, in=180] (b7);

\draw[e] (b1) to[out=0, in=0] (b4);
\draw[e] (b2) to[out=0, in=0] (b3);
\draw[e] (b5) to[out=0, in=0] (b6);

\end{tikzpicture}
\quad
\end{center} \end{example}

A statement equivalent to the next lemma was proven in \cite{FGG}.

\begin{lemma} \label{hardSingleTrundle} Let $p$ be a planar link state with no defects and no missing edges. There exists a planar link state $q$ with two defects and no missing edges, such that the juxtaposition of $p$ and $q$ consists of a single connected component. \end{lemma}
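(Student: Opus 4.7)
The plan is to reduce the lemma to the ``one-component meander'' theorem of Francesco-Golinelli-Guitter \cite{FGG}, by constructing $q$ as a small modification of a complete non-crossing matching. Since $p$ has no defects and no missing edges on $n$ nodes, $n$ must be even and $p$ is a non-crossing perfect matching.

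First, I would invoke the meander fact (translated into link-state language): there exists a planar link state $q'$ on $\{1,\ldots,n\}$ with no defects and no missing edges, such that the juxtaposition $p \cup q'$ is a single Hamilton cycle through all $n$ nodes. Given such a $q'$, let $\{1, j\}$ denote the arc of $q'$ incident to node $1$, and define $q := q' \setminus \{\{1, j\}\}$, so that nodes $1$ and $j$ become defects.

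The verification of the required properties of $q$ is then largely routine. The link state $q$ has exactly two defects (at $1$ and $j$) and no missing edges, since the other nodes remain matched by the inherited arcs of $q'$. Planarity holds because neither $1$ nor $j$ is covered by any remaining arc: node $1$ cannot be covered (no arc has left endpoint smaller than $1$), and any arc of $q$ covering $j$ would, together with $\{1,j\}$, have formed a non-crossing pair in $q'$, forcing either a left endpoint smaller than $1$ (impossible) or nesting inside $\{1,j\}$ (contradicting that the arc covers $j$). Finally, $p \cup q$ is obtained from the Hamilton cycle $p \cup q'$ by deleting a single edge, yielding a single Hamilton path through all $n$ nodes with endpoints at the defects $1$ and $j$; in particular, it is connected.

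The main obstacle is the meander step itself. I would either cite \cite{FGG} directly, or else give a self-contained proof by induction on $n$: locate an innermost arc $\{k, k+1\}$ of $p$, apply the inductive hypothesis to the restriction $p^\flat$ of $p$ to $\{1,\ldots,n\}\setminus\{k,k+1\}$ to obtain a corresponding matching ${q'}^\flat$, and then extend ${q'}^\flat$ to a matching $q'$ on $\{1,\ldots,n\}$ whose union with $p$ remains a single cycle. The extension step requires branching on the local structure of ${q'}^\flat$ near positions $k-1$ and $k+2$ (whether they lie on a common edge of $p^\flat \cup {q'}^\flat$, and if so in which of the two matchings), and this is where the real work lies; this is essentially the content of the procedure in \cite{FGG}.
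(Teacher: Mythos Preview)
Your proposal is correct and follows essentially the same approach as the paper: both invoke the one-component meander result of \cite{FGG} to obtain $q'$, then delete the arc of $q'$ incident to the top node (the paper phrases this as ``some connection that is not inside the arc of any other'') to produce the two defects of $q$. Your planarity check is spelled out in a bit more detail, and your sketch of an inductive proof of the meander step goes beyond what the paper does (it simply cites \cite{FGG}), but the argument is the same.
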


\begin{proof} Let $p$ be a planar link state with no defects and no missing edges. In \cite{FGG} it is shown (in the argument beginning at Fig. 21 and justifying Equation 6.30) that there exists a planar link state $q'$ with no defects and no missing edges, such that the juxtaposition of $p$ and $q'$ consists of a single loop: in their language `every upper arch configuration may be extended to a one-component meander'.

This link state $q'$ must have some connection that is not inside the arc of any other (the one connected to the top node will do, for example). We may cut this connection to form a pair of defects (performing the inverse of a \mOne), and since it is not inside the arc of any other connections, this does not violate planarity. The resulting link state $q$ has the required properties. \end{proof}

\begin{lemma} \label{hardTrundle} Let $p \in P_i$ be a planar link state with no missing edges, and at least one defect ($i \geq 1$). There exists a diagram $e_p \in \TL_n(\delta)$ satisfying Properties (\ref{eT1})-(\ref{eT3}) of Lemma \ref{LSControl}. \end{lemma}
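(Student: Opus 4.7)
The plan is to construct $e_p$ block by block, one block per interval from Proposition \ref{spheresOfInfluence}. Partition $\{1,\dots,n\}$ into intervals $I_1,\dots,I_i$, each containing exactly one defect $d_k$ of $p$ and with no $p$-connections crossing between distinct intervals; write $p_k$ for the restriction of $p$ to $I_k$. I will construct a planar Temperley-Lieb sub-diagram $e_k$ on the strands of $I_k$ and let $e_p$ be the combined diagram, with no connections between distinct intervals. Planarity of $e_p$ and all three properties of Lemma \ref{LSControl} will then reduce to the corresponding facts for each $e_k$ separately.

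To build $e_k$, reduce to Lemma \ref{hardSingleTrundle} by closing off the defect. Add a dummy node just outside $I_k$ together with an arc from it to $d_k$; because $d_k$ is outside every arc of $p_k$, the new arc crosses nothing, and we obtain a planar link state $\bar p_k$ on $|I_k|+1$ nodes with no defects. Apply Lemma \ref{hardSingleTrundle} to $\bar p_k$ to produce a planar link state $\bar q_k$ with two defects whose juxtaposition with $\bar p_k$ is a single path. By inspecting the proof of Lemma \ref{hardSingleTrundle}, we may moreover arrange for one of the two defects of $\bar q_k$ to sit at the dummy node: that proof cuts an outer arc of an auxiliary link state $q'$, and we simply choose to cut the arc of $q'$ incident to the dummy (which is automatically outer, the dummy being at the extreme position). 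Let $\ell_k$ be the restriction of $\bar q_k$ to $I_k$, obtained by deleting the dummy defect; this is a planar link state with a single defect at some position $k_0$, and the juxtaposition of $p_k$ and $\ell_k$ is the path from $m_{d_k}$ to $m_{k_0}$ obtained from the $\bar p_k$, $\bar q_k$ juxtaposition by dropping the dummy endpoint.

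Now let $e_k$ be the Temperley-Lieb diagram on $|I_k|$ strands with left link state $\ell_k$, right link state $p_k$, and a single left-to-right strand from $l_{k_0}$ to $r_{d_k}$. Planarity follows because no arc of $\ell_k$ crosses position $k_0$ and no arc of $p_k$ crosses $d_k$, so both families of arcs can be drawn hugging their respective columns while the strand traverses the interior. Property (\ref{eT1}) is immediate. Properties (\ref{eT2}) and (\ref{eT3}) follow because every middle node of $I_k$ lies on the juxtaposition path from $m_{d_k}$ to $m_{k_0}$, and $m_{k_0}$ is linked through the strand to $r_{d_k}$, so in particular $m_{d_k} \sim r_{d_k}$ and every $m_j \in I_k$ is connected to $r_{d_k}$. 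The main obstacle is the need to look inside the proof of Lemma \ref{hardSingleTrundle} to control the defect positions of $\bar q_k$; a dedicated one-defect analogue of that lemma would give a cleaner account.
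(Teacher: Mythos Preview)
Your argument is correct. Both proofs begin with the same first reduction via Proposition \ref{spheresOfInfluence}, treating each one-defect interval independently, but you handle the single-interval problem differently. The paper splits each interval into a \emph{front garden} (nodes above the defect) and a \emph{back garden} (nodes below), applies Lemma \ref{hardSingleTrundle} to each separately, and then performs a three-way case analysis (both gardens empty / one empty / neither empty) to wire the resulting paths through the defect in a planar way. Your dummy-node trick collapses this into a single application of Lemma \ref{hardSingleTrundle}: by adjoining an extremal node with an arc to the defect (which is planar precisely because the defect lies outside every arc of $p_k$), you convert the one-defect problem into the no-defect problem that lemma solves directly, and then strip the dummy back off. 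The price you pay is that you must reach into the proof of Lemma \ref{hardSingleTrundle} to pin one of the two resulting defects at the dummy position; you correctly observe that the arc of $q'$ incident to the extremal dummy node is necessarily outer, which is exactly the freedom that proof already exploits. The paper's version is more modular (it uses Lemma \ref{hardSingleTrundle} as a black box) at the cost of a case split; yours is slicker but, as you note, would be cleaner still if Lemma \ref{hardSingleTrundle} were stated in a one-defect form.
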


\begin{proof} For given $p$, we must construct $e_p$ such that
\begin{enumerate}
    \item $e_p$ has right link state $p$,
    \item $r_j \underset{p,e_p}{\sim} m_j$ whenever $p$ has a defect at node $j$, and
    \item for each $j$, there exists some $k$ with $m_j \underset{p,e_p}{\sim} r_k$.
\end{enumerate}

As in the proof of Lemma \ref{easyTrundle}, form the partial sesqui-diagram with $p$ on the left, and an otherwise-blank diagram having $p$ as its right link state (to be completed to $e_p$) on the right. We will argue similarly to Lemma \ref{easyTrundle}, now also ensuring that we do not add any intersecting connections. Below we illustrate an example from $\TL_{15}$, which we will carry along through the proof

\begin{center}
\quad
\begin{tikzpicture}[x=1.5cm,y=-.5cm,baseline=-1.05cm]

\def\wid{2}
\def\hei{0.5}
\def\nodesize{3}

\node[v, minimum size=\nodesize] (b1) at (1* \wid,0*\hei) {};
\node[v, minimum size=\nodesize] (b2) at (1* \wid,1*\hei) {};
\node[v, minimum size=\nodesize] (b3) at (1* \wid,2*\hei) {};
\node[v, minimum size=\nodesize] (b4) at (1* \wid,3*\hei) {};
\node[v, minimum size=\nodesize] (b5) at (1* \wid,4*\hei) {};
\node[v, minimum size=\nodesize] (b6) at (1* \wid,5*\hei) {};
\node[v, minimum size=\nodesize] (b7) at (1* \wid,6*\hei) {};
\node[v, minimum size=\nodesize] (b8) at (1* \wid,7*\hei) {};
\node[v, minimum size=\nodesize] (b9) at (1* \wid,8*\hei) {};
\node[v, minimum size=\nodesize] (b10) at (1* \wid,9*\hei) {};
\node[v, minimum size=\nodesize] (b11) at (1* \wid,10*\hei) {};
\node[v, minimum size=\nodesize] (b12) at (1* \wid,11*\hei) {};
\node[v, minimum size=\nodesize] (b13) at (1* \wid,12*\hei) {};
\node[v, minimum size=\nodesize] (b14) at (1* \wid,13*\hei) {};
\node[v, minimum size=\nodesize] (b15) at (1* \wid,14*\hei) {};

\node[v, minimum size=\nodesize] (c1) at (2* \wid,0*\hei) {};
\node[v, minimum size=\nodesize] (c2) at (2* \wid,1*\hei) {};
\node[v, minimum size=\nodesize] (c3) at (2* \wid,2*\hei) {};
\node[v, minimum size=\nodesize] (c4) at (2* \wid,3*\hei) {};
\node[v, minimum size=\nodesize] (c5) at (2* \wid,4*\hei) {};
\node[v, minimum size=\nodesize] (c6) at (2* \wid,5*\hei) {};
\node[v, minimum size=\nodesize] (c7) at (2* \wid,6*\hei) {};
\node[v, minimum size=\nodesize] (c8) at (2* \wid,7*\hei) {};
\node[v, minimum size=\nodesize] (c9) at (2* \wid,8*\hei) {};
\node[v, minimum size=\nodesize] (c10) at (2* \wid,9*\hei) {};
\node[v, minimum size=\nodesize] (c11) at (2* \wid,10*\hei) {};
\node[v, minimum size=\nodesize] (c12) at (2* \wid,11*\hei) {};
\node[v, minimum size=\nodesize] (c13) at (2* \wid,12*\hei) {};
\node[v, minimum size=\nodesize] (c14) at (2* \wid,13*\hei) {};
\node[v, minimum size=\nodesize] (c15) at (2* \wid,14*\hei) {};

\draw[e] (0.5*\wid,0*\hei) to[out=0, in=180] (b1);
\draw[e] (0.5*\wid,1*\hei) to[out=0, in=180] (b2);
\draw[e] (0.5*\wid,8*\hei) to[out=0, in=180] (b9);

\draw[e] (b3) to[out=180, in=180] (b6);
\draw[e] (b4) to[out=180, in=180] (b5);
\draw[e] (b7) to[out=180, in=180] (b8);

\draw[e] (b10) to[out=180, in=180] (b15);
\draw[e] (b11) to[out=180, in=180] (b12);
\draw[e] (b13) to[out=180, in=180] (b14);

\draw[e] (1.5*\wid,0*\hei) to[out=0, in=180] (c1);
\draw[e] (1.5*\wid,1*\hei) to[out=0, in=180] (c2);
\draw[e] (1.5*\wid,8*\hei) to[out=0, in=180] (c9);

\draw[e] (c3) to[out=180, in=180] (c6);
\draw[e] (c4) to[out=180, in=180] (c5);
\draw[e] (c7) to[out=180, in=180] (c8);

\draw[e] (c10) to[out=180, in=180] (c15);
\draw[e] (c11) to[out=180, in=180] (c12);
\draw[e] (c13) to[out=180, in=180] (c14);

%\draw[e, red] (1.5*\wid,5) to[out=180, in=0] (b6);
%\draw[e, red] (1.5*\wid,1) to[out=180, in=0] (b2);

%\draw[e, red] (b1) to[out=0, in=0] (b4);

%\draw[e, red] (b3) to[out=0, in=0] (b7);
%\draw[e, red] (b5) to[out=0, in=180] (1.5* \wid, 2);

\end{tikzpicture}
\quad
\end{center}

The goal is to complete the edges of $e_p$ so that between them the defects make a `Eulerian multi-cycle' through all of the edges on the left. As before, this will automatically satisfy Property (\ref{eT1}).

We first establish `spheres of influence' for each of the defects, so that each can be treated separately. Let $d_1 < \dots < d_i$ be the positions of the defects. By Proposition \ref{spheresOfInfluence}, since $i \geq 1$, there exist integers $0=a_0< \dots < a_i=n+1$ such that \begin{enumerate}
    \item The $j$-th defect $d_j$ lies in the interval $[a_{j-1},a_j)$, and
    \item there are no connections between intervals $[a_{j-1},a_j)$ and $[a_{k-1},a_k)$ for $j \neq k$.
\end{enumerate}
We will call the interval $[a_{j-1},a_j)$ the \emph{garden belonging to the $j$-th defect}. For our example, $i=3$, and we might get $a_0 = 1$, $a_1=2$, $a_2=7$, $a_3=16$. Adding dotted lines to indicate the boundaries between gardens, this produces:
\begin{center}
\quad
\begin{tikzpicture}[x=1.5cm,y=-.5cm,baseline=-1.05cm]

\def\wid{2}
\def\hei{0.5}
\def\nodesize{3}

\draw[e, dotted] (\wid,0.5*\hei) rectangle (2*\wid,0.5*\hei);
\draw[e, dotted] (\wid,5.5*\hei) rectangle (2*\wid,5.5*\hei);

%\fill[cyan!10] (\wid,-0.5*\hei) rectangle (2*\wid,0.5*\hei);
%\fill[cyan!20] (\wid,0.5*\hei) rectangle (2*\wid,5.5*\hei);
%\fill[cyan!30] (\wid,5.5*\hei) rectangle (2*\wid,14.5*\hei);

\node[v, minimum size=\nodesize] (b1) at (1* \wid,0*\hei) {};
\node[v, minimum size=\nodesize] (b2) at (1* \wid,1*\hei) {};
\node[v, minimum size=\nodesize] (b3) at (1* \wid,2*\hei) {};
\node[v, minimum size=\nodesize] (b4) at (1* \wid,3*\hei) {};
\node[v, minimum size=\nodesize] (b5) at (1* \wid,4*\hei) {};
\node[v, minimum size=\nodesize] (b6) at (1* \wid,5*\hei) {};
\node[v, minimum size=\nodesize] (b7) at (1* \wid,6*\hei) {};
\node[v, minimum size=\nodesize] (b8) at (1* \wid,7*\hei) {};
\node[v, minimum size=\nodesize] (b9) at (1* \wid,8*\hei) {};
\node[v, minimum size=\nodesize] (b10) at (1* \wid,9*\hei) {};
\node[v, minimum size=\nodesize] (b11) at (1* \wid,10*\hei) {};
\node[v, minimum size=\nodesize] (b12) at (1* \wid,11*\hei) {};
\node[v, minimum size=\nodesize] (b13) at (1* \wid,12*\hei) {};
\node[v, minimum size=\nodesize] (b14) at (1* \wid,13*\hei) {};
\node[v, minimum size=\nodesize] (b15) at (1* \wid,14*\hei) {};

\node[v, minimum size=\nodesize] (c1) at (2* \wid,0*\hei) {};
\node[v, minimum size=\nodesize] (c2) at (2* \wid,1*\hei) {};
\node[v, minimum size=\nodesize] (c3) at (2* \wid,2*\hei) {};
\node[v, minimum size=\nodesize] (c4) at (2* \wid,3*\hei) {};
\node[v, minimum size=\nodesize] (c5) at (2* \wid,4*\hei) {};
\node[v, minimum size=\nodesize] (c6) at (2* \wid,5*\hei) {};
\node[v, minimum size=\nodesize] (c7) at (2* \wid,6*\hei) {};
\node[v, minimum size=\nodesize] (c8) at (2* \wid,7*\hei) {};
\node[v, minimum size=\nodesize] (c9) at (2* \wid,8*\hei) {};
\node[v, minimum size=\nodesize] (c10) at (2* \wid,9*\hei) {};
\node[v, minimum size=\nodesize] (c11) at (2* \wid,10*\hei) {};
\node[v, minimum size=\nodesize] (c12) at (2* \wid,11*\hei) {};
\node[v, minimum size=\nodesize] (c13) at (2* \wid,12*\hei) {};
\node[v, minimum size=\nodesize] (c14) at (2* \wid,13*\hei) {};
\node[v, minimum size=\nodesize] (c15) at (2* \wid,14*\hei) {};

\draw[e] (0.5*\wid,0*\hei) to[out=0, in=180] (b1);
\draw[e] (0.5*\wid,1*\hei) to[out=0, in=180] (b2);
\draw[e] (0.5*\wid,8*\hei) to[out=0, in=180] (b9);

\draw[e] (b3) to[out=180, in=180] (b6);
\draw[e] (b4) to[out=180, in=180] (b5);
\draw[e] (b7) to[out=180, in=180] (b8);

\draw[e] (b10) to[out=180, in=180] (b15);
\draw[e] (b11) to[out=180, in=180] (b12);
\draw[e] (b13) to[out=180, in=180] (b14);

\draw[e] (1.6*\wid,0*\hei) to[out=0, in=180] (c1);
\draw[e] (1.6*\wid,1*\hei) to[out=0, in=180] (c2);
\draw[e] (1.6*\wid,8*\hei) to[out=0, in=180] (c9);

\draw[e] (c3) to[out=180, in=180] (c6);
\draw[e] (c4) to[out=180, in=180] (c5);
\draw[e] (c7) to[out=180, in=180] (c8);

\draw[e] (c10) to[out=180, in=180] (c15);
\draw[e] (c11) to[out=180, in=180] (c12);
\draw[e] (c13) to[out=180, in=180] (c14);

%\draw[e, red] (1.5*\wid,5) to[out=180, in=0] (b6);
%\draw[e, red] (1.5*\wid,1) to[out=180, in=0] (b2);

%\draw[e, red] (b1) to[out=0, in=0] (b4);

%\draw[e, red] (b3) to[out=0, in=0] (b7);
%\draw[e, red] (b5) to[out=0, in=180] (1.5* \wid, 2);

\end{tikzpicture}
\quad
\end{center}

Each defect $d_j$ may be strictly internal to its garden, or may lie at one end of it. The nodes in $[a_{j-1},a_j)$ that lie above the defect in the diagram (hence have numbers smaller than $d_j$) will be called $d_j$'s \emph{front garden}, and those that lie below $d_j$ will be called $d_j$'s \emph{back garden}. For any given $j$, it is possible that one or both of the gardens are empty.

Fix $j$. The edges in each of $d_j$'s \emph{front garden} (if non-empty) and \emph{back garden} (if non-empty) are a link state on a subset of the nodes, by construction having no defects. Apply Lemma \ref{hardSingleTrundle} to each of these `local' link states, and add the resulting link state to the right hand side of $e_p$. This adds edges to each garden so that the existing edges are connected into a single component (a Hamiltonian path) and so that two nodes are left, neither lying inside the arc of any of the new edges. For the front garden, call these nodes $f_0 < f_1$, and for the back garden call them $b_0 < b_1$. For the example (performing this step for each $j$) this might look as follows:

\begin{center}
\quad
\begin{tikzpicture}[x=1.5cm,y=-.5cm,baseline=-1.05cm]

\def\wid{2}
\def\hei{0.5}
\def\nodesize{3}

\draw[e, dotted] (\wid,0.5*\hei) rectangle (2*\wid,0.5*\hei);
\draw[e, dotted] (\wid,5.5*\hei) rectangle (2*\wid,5.5*\hei);

%\fill[cyan!10] (\wid,-0.5*\hei) rectangle (2*\wid,0.5*\hei);
%\fill[cyan!20] (\wid,0.5*\hei) rectangle (2*\wid,5.5*\hei);
%\fill[cyan!30] (\wid,5.5*\hei) rectangle (2*\wid,14.5*\hei);

\node[v, minimum size=\nodesize] (b1) at (1* \wid,0*\hei) {};
\node[v, minimum size=\nodesize] (b2) at (1* \wid,1*\hei) {};
\node[v, minimum size=\nodesize] (b3) at (1* \wid,2*\hei) {};
\node[v, minimum size=\nodesize] (b4) at (1* \wid,3*\hei) {};
\node[v, minimum size=\nodesize] (b5) at (1* \wid,4*\hei) {};
\node[v, minimum size=\nodesize] (b6) at (1* \wid,5*\hei) {};
\node[v, minimum size=\nodesize] (b7) at (1* \wid,6*\hei) {};
\node[v, minimum size=\nodesize] (b8) at (1* \wid,7*\hei) {};
\node[v, minimum size=\nodesize] (b9) at (1* \wid,8*\hei) {};
\node[v, minimum size=\nodesize] (b10) at (1* \wid,9*\hei) {};
\node[v, minimum size=\nodesize] (b11) at (1* \wid,10*\hei) {};
\node[v, minimum size=\nodesize] (b12) at (1* \wid,11*\hei) {};
\node[v, minimum size=\nodesize] (b13) at (1* \wid,12*\hei) {};
\node[v, minimum size=\nodesize] (b14) at (1* \wid,13*\hei) {};
\node[v, minimum size=\nodesize] (b15) at (1* \wid,14*\hei) {};

\node[v, minimum size=\nodesize] (c1) at (2* \wid,0*\hei) {};
\node[v, minimum size=\nodesize] (c2) at (2* \wid,1*\hei) {};
\node[v, minimum size=\nodesize] (c3) at (2* \wid,2*\hei) {};
\node[v, minimum size=\nodesize] (c4) at (2* \wid,3*\hei) {};
\node[v, minimum size=\nodesize] (c5) at (2* \wid,4*\hei) {};
\node[v, minimum size=\nodesize] (c6) at (2* \wid,5*\hei) {};
\node[v, minimum size=\nodesize] (c7) at (2* \wid,6*\hei) {};
\node[v, minimum size=\nodesize] (c8) at (2* \wid,7*\hei) {};
\node[v, minimum size=\nodesize] (c9) at (2* \wid,8*\hei) {};
\node[v, minimum size=\nodesize] (c10) at (2* \wid,9*\hei) {};
\node[v, minimum size=\nodesize] (c11) at (2* \wid,10*\hei) {};
\node[v, minimum size=\nodesize] (c12) at (2* \wid,11*\hei) {};
\node[v, minimum size=\nodesize] (c13) at (2* \wid,12*\hei) {};
\node[v, minimum size=\nodesize] (c14) at (2* \wid,13*\hei) {};
\node[v, minimum size=\nodesize] (c15) at (2* \wid,14*\hei) {};

\draw[e] (0.5*\wid,0*\hei) to[out=0, in=180] (b1);
\draw[e] (0.5*\wid,1*\hei) to[out=0, in=180] (b2);
\draw[e] (0.5*\wid,8*\hei) to[out=0, in=180] (b9);

\draw[e] (b3) to[out=180, in=180] (b6);
\draw[e] (b4) to[out=180, in=180] (b5);
\draw[e] (b7) to[out=180, in=180] (b8);

\draw[e] (b10) to[out=180, in=180] (b15);
\draw[e] (b11) to[out=180, in=180] (b12);
\draw[e] (b13) to[out=180, in=180] (b14);

\draw[e] (1.6*\wid,0*\hei) to[out=0, in=180] (c1);
\draw[e] (1.6*\wid,1*\hei) to[out=0, in=180] (c2);
\draw[e] (1.6*\wid,8*\hei) to[out=0, in=180] (c9);

\draw[e] (c3) to[out=180, in=180] (c6);
\draw[e] (c4) to[out=180, in=180] (c5);
\draw[e] (c7) to[out=180, in=180] (c8);

\draw[e] (c10) to[out=180, in=180] (c15);
\draw[e] (c11) to[out=180, in=180] (c12);
\draw[e] (c13) to[out=180, in=180] (c14);

\draw[e, red] (b3) to[out=0, in=0] (b4);
\draw[e, red] (1.2* \wid,4*\hei) to[out=180, in=0] (b5);
\draw[e, red] (1.2* \wid,5*\hei) to[out=180, in=0] (b6);

\draw[e, red] (1.2* \wid,6*\hei) to[out=180, in=0] (b7);
\draw[e, red] (1.2* \wid,7*\hei) to[out=180, in=0] (b8);

\draw[e, red] (1.2* \wid,13*\hei) to[out=180, in=0] (b14);
\draw[e, red] (1.2* \wid,14*\hei) to[out=180, in=0] (b15);
\draw[e, red] (b10) to[out=0, in=0] (b11);
\draw[e, red] (b12) to[out=0, in=0] (b13);

\end{tikzpicture}
\quad
\end{center}

We will now connect the left and right nodes at height $d_j$ via a `Eulerian cycle through both gardens'. Because one or both gardens may be empty, a case division is necessary.

\begin{itemize}
    \item If both gardens are empty, then add a single horizontal connection at height $d_j$.
    \item If one garden is empty and the other is non-empty, then we divide further:\begin{itemize}
    \item If only the front garden is non-empty, then connect the right-hand instance of $d_j$ to $f_0$, and connect $f_1$ to the left-hand instance of $d_j$. Because $f_0 < f_1$, these two connections do not intersect.
    \item If only the back garden is non-empty, then connect the right-hand instance of $d_j$ to $b_1$, and connect $b_0$ to the left-hand instance of $d_j$. Because $b_0 < b_1$, these two connections do not intersect.
\end{itemize}
    \item If both gardens are non-empty, then connect the right-hand instance of $d_j$ to $b_1$, connect $b_0$ to $f_0$, and connect $f_1$ to the left-hand instance of $d_j$. One the left, we have $f_0 < f_1 < d_j < b_0 < b_1$. Because $b_1$ is the largest, the horizontal connection from $j$ to $b_1$ does not intersect either of the two left-to left connections. Because $f_0 < f_1 < d_j < b_0$, the two left-to-left connections (are nested and) do not intersect.
\end{itemize}

In the example, applying these steps to each garden gives:

\begin{center}
\quad
\begin{tikzpicture}[x=1.5cm,y=-.5cm,baseline=-1.05cm]

\def\wid{2}
\def\hei{0.5}
\def\nodesize{3}

\draw[e, dotted] (\wid,0.5*\hei) rectangle (2*\wid,0.5*\hei);
\draw[e, dotted] (\wid,5.5*\hei) rectangle (2*\wid,5.5*\hei);

%\fill[cyan!10] (\wid,-0.5*\hei) rectangle (2*\wid,0.5*\hei);
%\fill[cyan!20] (\wid,0.5*\hei) rectangle (2*\wid,5.5*\hei);
%\fill[cyan!30] (\wid,5.5*\hei) rectangle (2*\wid,14.5*\hei);

\node[v, minimum size=\nodesize] (b1) at (1* \wid,0*\hei) {};
\node[v, minimum size=\nodesize] (b2) at (1* \wid,1*\hei) {};
\node[v, minimum size=\nodesize] (b3) at (1* \wid,2*\hei) {};
\node[v, minimum size=\nodesize] (b4) at (1* \wid,3*\hei) {};
\node[v, minimum size=\nodesize] (b5) at (1* \wid,4*\hei) {};
\node[v, minimum size=\nodesize] (b6) at (1* \wid,5*\hei) {};
\node[v, minimum size=\nodesize] (b7) at (1* \wid,6*\hei) {};
\node[v, minimum size=\nodesize] (b8) at (1* \wid,7*\hei) {};
\node[v, minimum size=\nodesize] (b9) at (1* \wid,8*\hei) {};
\node[v, minimum size=\nodesize] (b10) at (1* \wid,9*\hei) {};
\node[v, minimum size=\nodesize] (b11) at (1* \wid,10*\hei) {};
\node[v, minimum size=\nodesize] (b12) at (1* \wid,11*\hei) {};
\node[v, minimum size=\nodesize] (b13) at (1* \wid,12*\hei) {};
\node[v, minimum size=\nodesize] (b14) at (1* \wid,13*\hei) {};
\node[v, minimum size=\nodesize] (b15) at (1* \wid,14*\hei) {};

\node[v, minimum size=\nodesize] (c1) at (2* \wid,0*\hei) {};
\node[v, minimum size=\nodesize] (c2) at (2* \wid,1*\hei) {};
\node[v, minimum size=\nodesize] (c3) at (2* \wid,2*\hei) {};
\node[v, minimum size=\nodesize] (c4) at (2* \wid,3*\hei) {};
\node[v, minimum size=\nodesize] (c5) at (2* \wid,4*\hei) {};
\node[v, minimum size=\nodesize] (c6) at (2* \wid,5*\hei) {};
\node[v, minimum size=\nodesize] (c7) at (2* \wid,6*\hei) {};
\node[v, minimum size=\nodesize] (c8) at (2* \wid,7*\hei) {};
\node[v, minimum size=\nodesize] (c9) at (2* \wid,8*\hei) {};
\node[v, minimum size=\nodesize] (c10) at (2* \wid,9*\hei) {};
\node[v, minimum size=\nodesize] (c11) at (2* \wid,10*\hei) {};
\node[v, minimum size=\nodesize] (c12) at (2* \wid,11*\hei) {};
\node[v, minimum size=\nodesize] (c13) at (2* \wid,12*\hei) {};
\node[v, minimum size=\nodesize] (c14) at (2* \wid,13*\hei) {};
\node[v, minimum size=\nodesize] (c15) at (2* \wid,14*\hei) {};

\draw[e] (0.5*\wid,0*\hei) to[out=0, in=180] (b1);
\draw[e] (0.5*\wid,1*\hei) to[out=0, in=180] (b2);
\draw[e] (0.5*\wid,8*\hei) to[out=0, in=180] (b9);

\draw[e] (b3) to[out=180, in=180] (b6);
\draw[e] (b4) to[out=180, in=180] (b5);
\draw[e] (b7) to[out=180, in=180] (b8);

\draw[e] (b10) to[out=180, in=180] (b15);
\draw[e] (b11) to[out=180, in=180] (b12);
\draw[e] (b13) to[out=180, in=180] (b14);

\draw[e] (1.6*\wid,0*\hei) to[out=0, in=180] (c1);
\draw[e] (1.6*\wid,1*\hei) to[out=0, in=180] (c2);
\draw[e] (1.6*\wid,8*\hei) to[out=0, in=180] (c9);

\draw[e] (c3) to[out=180, in=180] (c6);
\draw[e] (c4) to[out=180, in=180] (c5);
\draw[e] (c7) to[out=180, in=180] (c8);

\draw[e] (c10) to[out=180, in=180] (c15);
\draw[e] (c11) to[out=180, in=180] (c12);
\draw[e] (c13) to[out=180, in=180] (c14);

\draw[e, red] (b3) to[out=0, in=0] (b4);
\draw[e, red] (1.2* \wid,4*\hei) to[out=180, in=0] (b5);
\draw[e, red] (1.2* \wid,5*\hei) to[out=180, in=0] (b6);

\draw[e, red] (1.2* \wid,6*\hei) to[out=180, in=0] (b7);
\draw[e, red] (1.2* \wid,7*\hei) to[out=180, in=0] (b8);

\draw[e, red] (1.2* \wid,13*\hei) to[out=180, in=0] (b14);
\draw[e, red] (1.2* \wid,14*\hei) to[out=180, in=0] (b15);
\draw[e, red] (b10) to[out=0, in=0] (b11);
\draw[e, red] (b12) to[out=0, in=0] (b13);

\draw[e, cyan] (1.6* \wid,0*\hei) to[out=180, in=0] (b1);

\draw[e, cyan] (1.6* \wid,1*\hei) to[out=180, in=0] (1.2* \wid,5*\hei);
\draw[e, cyan] (1.2* \wid,4*\hei) to[out=0, in=0] (1* \wid,1*\hei);

\draw[e, cyan] (1.6* \wid,8*\hei) to[out=180, in=0] (1.2* \wid,14*\hei);
\draw[e, cyan] (1.2* \wid,13*\hei) to[out=0, in=0] (1.2* \wid,6*\hei);
\draw[e, cyan] (1.2* \wid,7*\hei) to[out=0, in=0] (1* \wid,8*\hei);

\end{tikzpicture}
\quad
\end{center}

We have now finished the construction of $e_p$.

All nodes in the right hand half of the sesqui-diagram have now been connected to something, so $e_p$ is a diagram with no missing edges. By Lemma \ref{hardSingleTrundle}, the above procedure connects the left- and right-hand instances of $d_j$ for each $j$, establishing Property (\ref{eT2}), and each edge from $p$ on the left of the sesqui-diagram is strictly internal to the garden belonging to some $d_j$, hence, again by Lemma \ref{hardSingleTrundle}, lies on the path between the two instances of $d_j$, establishing Property (\ref{eT1}).

It remains to establish planarity. We must argue that no pair of connections in $e_p$ intersects. We will break into cases, based on the fact that each connection is either left-to-left, right-to-right, or left-to-right.
\begin{itemize}
    \item No right-to-right connection in $e_p$ may intersect another (right-to-right or left-to-right) connection, because we began with the planar link state $p$.
    \item No pair of left-to-right connections may intersect, because each defect $d_j$ is connected to a node in its own garden(s), and if $j<k$ then the garden(s) of $d_j$ lie strictly above the garden(s) of $d_k$.
    \item If a left-to-right connection (from, say, defect $d_j$ on the right) intersects a left-to-left connection, then the left-to-left connection must lie in $d_j$'s own garden, but $d_j$ is connected to either $b_0$, $f_1$, or to the left-hand instance of $d_j$, and none of these points lies inside the arc of any of the left-to-left connections produced by Lemma \ref{hardSingleTrundle}. The remaining left-to-left connections are those among the two instances of $d_j$, the $f_i$ and the $b_i$, and we established during the construction that these do not intersect the horizontal connection.
    \item If two left-to-left connections intersect, then they must belong to the same garden. The connections strictly internal to the front and back gardens are those produced by Lemma \ref{hardSingleTrundle}, which do not intersect by that lemma. We established during the construction that the left-to-left connections among the two instances of $d_j$, the $f_i$ and the $b_i$ do not intersect one another, and these do not intersect any of the other left-to-left connections by Lemma \ref{hardSingleTrundle}.
\end{itemize}

This establishes that $e_p$ is planar, and completes the proof. \end{proof}

We now prove Theorem \ref{generalisedSroka}.

\begin{proof}[Proof of Theorem \ref{generalisedSroka}] Since $\TL_n(\delta)$ is free as an $R$-module on a subset of the rook-Brauer diagrams, we may attempt to apply Theorem \ref{subalgebraOfBrauer} with $\ell = 1$ and $m = n-1$. This will give the correct conclusion, since $I_{\ell-1}=I_0$, and $\faktor{\TL_n(\delta)}{\TL_n(\delta) \cap I_{n-1}} \cong R$.

To verify the hypothesis of that theorem, take $i$ in the range $1 \leq i \leq n-1$. The algebra $\TL_n(\delta)$ contains diagrams possessing each right link state that is planar and has no missing edges, so we must take such a $p \in P_i$, and find an idempotent $e_p$ generating $\TL_n(\delta) \cap J_p$ as a left ideal.

Let $e_p$ be the diagram given by Lemma \ref{hardTrundle}. By Lemma \ref{LSControl} with $y=e_p$, the diagram $e_p$ is idempotent. Now, the left ideal generated by $e_p$ is in particular closed under $R$-linear combinations, and $\TL_n(\delta) \cap J_p$ is free on a basis of diagrams, so it suffices to show that for each diagram $y$ in $\TL_n(\delta) \cap J_p$, there exists $x$ in $\TL_n(\delta)$ such that $x e_p = y$, but by Lemma \ref{LSControl}, taking $x=y$ satisfies this equation. This completes the proof. \end{proof}

\printbibliography

\end{document}